\newcommand{\raisemath}[1]{\mathpalette{\raisem@th{#1}}}
\newcommand{\raisem@th}[3]{\raisebox{#1}{$#2#3$}}
\theoremstyle{theorem}
\newtheorem{thm}{Theorem}
\newtheorem{lem}[thm]{Lemma}
\newtheorem{lemma}[thm]{Lemma}
\newtheorem{prop}[thm]{Proposition}
\newtheorem{cor}[thm]{Corollary}
\newtheorem{conj}[thm]{Conjecture}
\theoremstyle{definition}
\newtheorem{defn}[thm]{Definition}
\newtheorem{definition}[thm]{Definition}
\newtheorem{notation}[thm]{Notation}
\newtheorem{ex}[thm]{Example}
\newtheorem{example}[thm]{Example}
\newtheorem{remark}[thm]{Remark}
\newcommand{\bigsig}{{\sum}}
\DeclareMathOperator*{\sumpar}{\bigsig^{\ \mathclap{\|}\mathclap{\rule[1.3pt]{6pt}{0.4pt}}}\:}
\def\parallelsum{\:\ \mathclap{\|}\mathclap{-}\ \:}
\newcommand{\sumparlimits}[2]{{\sum\limits_{#1}^#2}^{\hspace{-3 pt}\raisemath{-8.5pt}{\parallelsum}}\hspace{-1.7pt}}
\newcommand{\sumparlimitlower}[1]{{\sum\limits_{#1}}^{\hspace{-3 pt}\raisemath{-2.5pt}{\parallelsum}}\hspace{-2.9pt}}
\numberwithin{thm}{section}
\def\m@th{\mathsurround\z@}
\def\cases#1{\left\{\,\vcenter{\normalbaselines\m@th
    \ialign{$##\hfil$&\quad##\hfil\crcr#1\crcr}}\right.}
\def\hang{\hangindent 24pt}
\def\d@nger{\medbreak\begingroup\clubpenalty=10000
  \def\par{\endgraf\endgroup\medbreak} %
  \noindent\hang\hangafter=-2
  \hbox to0pt{\hskip-\hangindent\dbend\hfill}}
\outer\def\danger{\d@nger}
\newcommand{\rr}{\mathbb{R}}
\newcommand{\zz}{\mathbb{Z}}
\newcommand{\qq}{\mathbb{Q}}
\newcommand{\kk}{\mathbb{K}}
\newcommand{\ff}{\mathbb{F}}
\newcommand{\bbs}{\mathbb{S}}
\newcommand{\e}{\varepsilon}
\newcommand{\tog}{\operatorname{Tog}}
\newcommand{\btog}{\operatorname{BTog}}
\newcommand{\BTog}{\operatorname{BTog}}
\newcommand{\ntog}{\operatorname{NTog}}
\newcommand{\mc}{\operatorname{MC}}
\newcommand{\rk}{\operatorname{rk}}
\newcommand{\ra}{\rightarrow}
\newcommand{\dra}{\dashrightarrow}
\newcommand{\down}{\nabla}
\newcommand{\up}{\Delta}
\newcommand{\BAR}{\operatorname{BAR}}
\newcommand{\BAG}{\operatorname{BAG}}
\newcommand{\BOG}{\operatorname{BOG}}
\newcommand{\BOR}{\operatorname{BOR}}
\newcommand{\NAR}{\operatorname{NAR}}
\newcommand{\NOR}{\operatorname{NOR}}
\newcommand{\sm}{\setminus}
\definecolor{green}{HTML}{006600}
\definecolor{orange}{HTML}{FF6200}
\definecolor{purple}{HTML}{990099}
\definecolor{coral}{HTML}{FF7F50}
\definecolor{mahogany}{HTML}{C04000}
\definecolor{gold}{HTML}{DAA541}
\definecolor{chocolate}{HTML}{D5691E}
\newcommand{\rkT}[1]{\boldsymbol{T}_{\rk=#1}}
\newcommand{\rktau}[1]{\boldsymbol{\tau}_{\rk=#1}}
\newcommand{\flatbin}{\mathbin{\flat}}
\renewcommand{\labelitemii}{\scriptsize\raise2pt\hbox{$\!{\blacktriangleright}$}} 
\newcommand{\cala}{\mathcal{A}}
\newcommand{\calc}{\mathcal{C}}
\newcommand{\calf}{\mathcal{F}}
\newcommand{\calj}{\mathcal{J}}
\newcommand{\calo}{\mathcal{O}}
\newcommand{\calp}{\mathcal{P}}
\newcommand{\calr}{\mathcal{R}}
\def\rowm{\rho}
\def\rowA{\rowm_{\mathcal{A}}}
\def\rowJ{\rowm_{\mathcal{J}}}
\newcommand{\ds}{\displaystyle}
\newenvironment{absolutelynopagebreak}
  {\par\nobreak\vfil\penalty0\vfilneg
   \vtop\bgroup}
  {\par\xdef\tpd{\the\prevdepth}\egroup
   \prevdepth=\tpd}
\newcommand\mydots{\ifmmode\makebox[1.2em][c]{$\cdot$\hfil$\cdot$\hfil$\cdot$}\fi}
\title[Birational and noncommutative antichain rowmotion]{Birational and noncommutative
lifts of antichain toggling and rowmotion}
\author[Joseph]{Michael Joseph}
\address{Department of Technology and Mathematics, Dalton State College, 650 College Dr., Dalton, GA 30720, USA}
\email{mjosephmath@gmail.com}
\author[Roby]{Tom Roby}
\address{Department of Mathematics, University of Connecticut, Storrs, CT 06269-1009, USA}
\email{tom.roby@uconn.edu}
\date{\today}
\subjclass[2010]{05E18, 06A07, 12E15}
\keywords{antichain,
birational rowmotion, 
dynamical algebraic combinatorics,
graded poset,
homomesy, 
isomorphism,
noncommutative algebra,
periodicity, 
rowmotion,
toggle group,
transfer map. 
}
\begin{document}

\begin{abstract}
The rowmotion action on order ideals or on antichains of a finite partially ordered set has been studied (under a variety of names) by many authors.  Depending on the poset, one finds unexpectedly interesting orbit structures, instances of (small order) periodicity, cyclic sieving, and homomesy. Many of these nice features still hold when the action is extended to $[0,1]$-labelings of the poset or (via detropicalization) to labelings by rational functions (the \textit{birational} setting).  

In this work, we parallel the birational lifting already done for order-ideal rowmotion to antichain rowmotion.  We give explicit equivariant bijections between the birational toggle groups and between their respective liftings.  We further extend all of these notions to labellings by \textit{noncommutative} rational functions, setting an unpublished periodicity conjecture of Grinberg in a broader context. 


\end{abstract}

\maketitle
\vspace{-0.3 in}
\section{Introduction}\label{sec:intro}

\textit{Combinatorial rowmotion} is a well-studied action on the set of order ideals
$\calj(P)$ or on the set of antichains $\cala(P)$ of a finite poset $P$. 
It was first studied as a map on $\cala(P)$
by Brouwer and Schrijver~\cite{brouwer1974period}, and goes by several names.  In recent literature, the name 
``rowmotion,'' due to Striker and Williams~\cite{strikerwilliams} (who summarize the
history), has stuck.  An updated historical survey is available in Thomas and
Williams~\cite[\S7]{rowmotion-in-slow-motion}.

Rowmotion has proven to be of great interest in dynamical algebraic combinatorics.  On
several ``nice'' posets (e.g., positive root posets or minuscule posets, such as products of
two chains), rowmotion exhibits various phenomena including \emph{periodicity} (of a relatively
small order), \emph{cyclic sieving} (as defined by Reiner, Stanton, and White~\cite{csp}),
\emph{homomesy} (where a natural statistic, e.g.~cardinality, has the same average over every
orbit)~\cite{ast,shahrzad,indepsetspaper,panyushev,propproby,rush-wang-homomesy-minuscule,vorland-3-chains,vorland-thesis},
and \emph{resonance} (see~\cite{dpsresonance,dilks-striker-vorland}).
Rowmotion is related to Auslander--Reiten translation on certain quivers~\cite{Yil17}.   

Quite surprisingly, some of these features extend to the piecewise-linear (order polytope)
level and can be lifted further to the birational level~\cite{einpropp}.  One sometimes
gets periodicity of the same order as the combinatorial map, and often homomesy extends as
well~\cite{GrRo16,GrRo15,musiker-roby}. Just as one example, the file-cardinality homomesy
for order-ideal rowmowtion on rectangular posets~\cite[Thm.~19ff]{propproby} lifts to a birational
homomesy~\cite[Thm.~2.16]{musiker-roby};  Rush and Wang's extension of this result to all minuscule
posets~\cite[Thm.~1.2]{rush-wang-homomesy-minuscule} has recently been lifted to the birational
realm by S.~Okada~\cite{okada-minuscule}.

It is a continuing mystery why certain properties of combinatorial rowmotion on
many families of posets lift to the birational realm.
This question is still far from answered, but Hopkins has shown
some properties must lift to the birational realm on posets which satisfy the tCDE property and have a grid-like structure~\cite{hopkins2019minuscule}.
Birational rowmotion is related to $Y$-systems of type $A_m \times A_n$ described in
Zamolodchikov periodicity~\cite[\S4.4]{robydac}.

The lifting of order-ideal rowmotion (herein denoted $\rowJ$) to BOR-motion (birational order rowmotion) proceeds by first writing $\rowJ$ as a product of
involutions called \textit{toggles}, each of which acts on $\calj(P)$, the set of order ideals of a
poset. These toggles are then extended to Stanley's \textit{order polytope} $\calo\calp(P)$,
and then lifted further to toggles at
the birational level~\cite{einpropp}, following the lead of Kirillov and Berenstein~\cite{berenstein}.
Letting $\kk$ be a field of characteristic zero, we lift from a piecewise-linear map to a birational map through detropicalization of operations.
Any equality of rational expressions (such as periodicity or
homomesy) that does not contain subtraction or
additive inverses 
also holds in the piecewise-linear realm (by tropicalization) and furthermore in the
combinatorial realm (by restriction); see~\cite[Remark~10]{GrRo16}.

As part of a broader study of toggling in general, Striker defined \emph{antichain toggles}
that act on $\cala (P)$~\cite{strikergentog}.  The first
author gave an explicit isomorphism between these two different toggle groups (on $\calj (P)$
and on $\cala(P)$) for the same
poset $P$, and extended these results to the piecewise-linear level~\cite{antichain-toggling}, 
where $\cala (P)$
extends to Stanley's \emph{chain polytope} $\calc (P)$~\cite{Sta86}.  These toggles can be
used to define the \emph{antichain rowmotion} of~\cite{brouwer1974period} and its extension
to all of $\calc (P)$.

Our goal in this work is to study the parallel lifting of this map on $\calc (P)$ to the
birational level, which we call  \emph{Birational Antichain Rowmotion} or \emph{BAR-motion}
(Definition~\ref{def:BAR}) for short. We construct equivariant bijections between this
action and the previously studied BOR-motion, allowing us to deduce properties of one from
the other.  We also describe a noncommutative analogue of both these maps (the first
originally discovered by Darij Grinberg, unpublished), and prove that these bijections
extend even to this realm.

The paper is organized as follows.  In Section~\ref{sec:jungle rollers}, we include the
necessary background on rowmotion at the combinatorial, piecewise-linear, and birational
levels, including ways to view them as compositions of transfer maps and as products of
toggles.  This positions us in Section~\ref{sec:batar} to define \textit{birational
antichain toggling} and \textit{BAR-motion} and construct the explicit bijection between the
two different toggle groups at the birational level. 

Section~\ref{sec:graded} contains our results that pertain specifically to graded
posets, which are the only ones known thus far to exhibit homomesy or periodicity in the birational realm. 
Since toggles within the same rank commute with each other, we can toggle them ``all at
once''.  Toggling by ranks (hence the term ``rowmotion'') from top to bottom gives one map,
but we can also toggle first all even ranks, then all odd ones, giving a map
called \textit{gyration} by Striker~\cite{strikerRS}. 
Grinberg and the second author worked with graded rescalings of
poset labelings in several proofs in~\cite{GrRo16,GrRo15}.
We discuss the analogues of these ideas under the antichain perspective.

In Section~\ref{sec:noncommutative}, we lift our birational actions
further to the \emph{noncommutative realm}, where we do not assume commutativity
of multiplication.  This setting has not appeared in the literature before, but is based on
unpublished definitions and conjectures of Darij Grinberg.  
In this paper, we show that \emph{NOR-motion (Noncommutative Order Rowmotion)} and
\emph{NAR-motion (Noncommutative Antichain Rowmotion)} 
always exhibit the same order on any given finite poset.
We use ``noncommutative realm'' as a short term, but we really mean
``not-necessarily-commutative birational realm'' as fields are skew fields. 

We defer the proofs of several results in Section~\ref{sec:batar} since they follow from their
noncommutative analogues in Section~\ref{sec:noncommutative}. 
(We originally proved the results in this realm before realizing we could extend
them to the noncommutative setting.)  
Toggles are no longer involutions, so it is surprising that many other key properties do continue to hold
(with suitably modified definitions), such as
the isomorphism between the order and antichain toggle groups. 

To summarize, we have four realms (combinatorial, piecewise-linear, birational, and
noncommutative) and two rowmotion maps (order-ideal and antichain).  Our new work here
involves lifting the latter map to the birational and noncommutative realms, and giving
explicit equivariant isomorphisms connecting these two maps at the two highest levels.  
Beyond the inherent interest of showing that Stanley's transfer maps between $\calo (P)$ and
$\calc (P)$ lift nicely to the birational and noncommutative birational settings, we hope
that having different approaches to these maps will help shed light on some of their
tantalizing properties.  In particular, on several minuscule and root posets, BOR-motion has
the same order as combinatorial
order-ideal rowmowtion, and this is conjectured to extend to NOR-motion as well.  Our
results show that a proof for NAR-motion would automatically imply it for NOR-motion as
well.  

To prove refined versions of homomesy in the product of two chain posets, J.~Propp and the
second author used an equivariant bijection discovered (less formally) by R.~Stanley and
H.~Thomas~\cite[\S 7]{propproby}.  
In a separate paper~\cite{STword} we explore the lifting of this ``Stanley--Thomas word'' to the
piecewise-linear, birational, and noncommutative realms. Although the map is no longer a
bijection, so cannot be used to prove periodicity directly, it still gives enough
information to prove the homomesy at the piecewise-linear and birational levels (a result
previously shown by D.~Grinberg, S.~Hopkins, and S.~Okada). Even at the noncommutative
level, the Stanley--Thomas word of a poset labeling rotates cyclically with the lifting of
antichain rowmotion.

\subsection{Acknowledgements}
The authors are grateful for useful conversations with
David Einstein,
Darij Grinberg,
Sam Hopkins,
Soichi Okada,
James Propp, 
Richard Stanley, 
Jessica Striker,
Corey Vorland, 
Nathan Williams.  We also thank an anonymous referee, who made a number of useful suggestions 
incorporated into this version.

\section{Rowmotion in the combinatorial, piecewise-linear, and birational realms}\label{sec:jungle rollers}

This section contains the necessary background for this paper.
We discuss the toggle group of a poset $P$, rowmotion on order ideals and on antichains, and
define their
generalizations to the piecewise-linear realm.  We also discuss the lifting of order-ideal
rowmotion to the birational realm. 
Our new results begin in Section~\ref{sec:batar} with the birational lifting of
antichain rowmotion.

\subsection{Rowmotion in the combinatorial realm}

We assume familiarity with basic notions from the theory of posets,
as discussed in \cite[Ch. 3]{ec1ed2}.
Throughout this paper $P$ will denote a finite poset.

Following the notation of Einstein-Propp~\cite{einpropp}, we can define rowmotion via the
following natural bijections between the set $\calj(P)$ of all 
\emph{order ideals} of $P$, the set $\calf(P)$ of all \emph{order filters} of $P$, and the set $\cala(P)$ of
all \emph{antichains} of $P$. 
\begin{itemize}
\item The map $\Theta: 2^P \ra 2^P$ where $\Theta(S) = P\sm S$ is the complement of $S$ (so $\Theta$ sends order ideals to order filters and vice versa).
\item The \textbf{up-transfer} $\up:\calj(P) \ra \cala(P)$, where $\up(I)$ is the set of maximal elements of $I$.  For an antichain $A\in\cala(P)$,
$\up^{-1}(A)=\{x\in P: x\leq y \text{ for some } y\in A\}$ (``downward saturation'').
\item The \textbf{down-transfer} $\down:\calf(P) \ra \cala(P)$, where $\down(F)$ is the set of minimal elements of $F$.  For an antichain $A\in\cala(P)$,
$\down^{-1}(A)=\{x\in P: x\geq y \text{ for some } y\in A\}$ (``upward saturation'').
\end{itemize}

\begin{definition}\label{def:OIrow}
\textbf{Order-ideal rowmotion} is the map $\rowJ: \calj(P) \ra \calj(P)$ given by the composition
$\rowJ = \up^{-1}\circ\down\circ\Theta$.
\textbf{Antichain rowmotion} is the map $\rowA: \cala(P) \ra \cala(P)$ given by the composition
$\rowA = \down\circ\Theta\circ\up^{-1}$.
\textbf{Order-filter rowmotion} is the map $\rho_{\calf}: \calf (P)\ra \calf (P)$ given by the composition $\rho_\calf = \Theta\circ\up^{-1}\circ\down$.
\end{definition}

\begin{example}\label{ex:a3-row}
Below we show examples of $\rowJ$  and $\rowA$ on the positive root poset
$\Phi^+(A_3)$. In each step, the elements of the subset of the poset are given by the
filled-in circles.
\begin{center}
\begin{tikzpicture}[xscale=0.5,yscale=0.5]
\node at (-5,-3) {$\rowJ:$};
\begin{scope}[shift={(0,-4)}]
\draw[thick] (-0.1, 1.9) -- (-0.9, 1.1);
\draw[thick] (0.1, 1.9) -- (0.9, 1.1);
\draw[thick] (-1.1, 0.9) -- (-1.9, 0.1);
\draw[thick] (-0.9, 0.9) -- (-0.1, 0.1);
\draw[thick] (0.9, 0.9) -- (0.1, 0.1);
\draw[thick] (1.1, 0.9) -- (1.9, 0.1);
\draw (0,2) circle [radius=0.2];
\draw (-1,1) circle [radius=0.2];
\draw[fill] (1,1) circle [radius=0.2];
\draw[fill] (-2,0) circle [radius=0.2];
\draw[fill] (0,0) circle [radius=0.2];
\draw[fill] (2,0) circle [radius=0.2];
\end{scope}
\node at (3.5,-3) {$\stackrel{\Theta}{\longmapsto}$};
\begin{scope}[shift={(7,-4)}]
\draw[thick] (-0.1, 1.9) -- (-0.9, 1.1);
\draw[thick] (0.1, 1.9) -- (0.9, 1.1);
\draw[thick] (-1.1, 0.9) -- (-1.9, 0.1);
\draw[thick] (-0.9, 0.9) -- (-0.1, 0.1);
\draw[thick] (0.9, 0.9) -- (0.1, 0.1);
\draw[thick] (1.1, 0.9) -- (1.9, 0.1);
\draw[fill] (0,2) circle [radius=0.2];
\draw[fill] (-1,1) circle [radius=0.2];
\draw (1,1) circle [radius=0.2];
\draw (-2,0) circle [radius=0.2];
\draw (0,0) circle [radius=0.2];
\draw (2,0) circle [radius=0.2];
\end{scope}
\node at (10.5,-3) {$\stackrel{\down}{\longmapsto}$};
\begin{scope}[shift={(14,-4)}]
\draw[thick] (-0.1, 1.9) -- (-0.9, 1.1);
\draw[thick] (0.1, 1.9) -- (0.9, 1.1);
\draw[thick] (-1.1, 0.9) -- (-1.9, 0.1);
\draw[thick] (-0.9, 0.9) -- (-0.1, 0.1);
\draw[thick] (0.9, 0.9) -- (0.1, 0.1);
\draw[thick] (1.1, 0.9) -- (1.9, 0.1);
\draw (0,2) circle [radius=0.2];
\draw[fill] (-1,1) circle [radius=0.2];
\draw (1,1) circle [radius=0.2];
\draw (-2,0) circle [radius=0.2];
\draw (0,0) circle [radius=0.2];
\draw (2,0) circle [radius=0.2];
\end{scope}
\node at (17.5,-3) {$\stackrel{\up^{-1}}{\longmapsto}$};
\begin{scope}[shift={(21,-4)}]
\draw[thick] (-0.1, 1.9) -- (-0.9, 1.1);
\draw[thick] (0.1, 1.9) -- (0.9, 1.1);
\draw[thick] (-1.1, 0.9) -- (-1.9, 0.1);
\draw[thick] (-0.9, 0.9) -- (-0.1, 0.1);
\draw[thick] (0.9, 0.9) -- (0.1, 0.1);
\draw[thick] (1.1, 0.9) -- (1.9, 0.1);
\draw (0,2) circle [radius=0.2];
\draw[fill] (-1,1) circle [radius=0.2];
\draw (1,1) circle [radius=0.2];
\draw[fill] (-2,0) circle [radius=0.2];
\draw[fill] (0,0) circle [radius=0.2];
\draw (2,0) circle [radius=0.2];
\end{scope}
\end{tikzpicture}
\end{center}

\begin{center}
\begin{tikzpicture}[xscale=0.5,yscale=0.5]
\node at (-5,1) {$\rowA:$};
\begin{scope}
\draw[thick] (-0.1, 1.9) -- (-0.9, 1.1);
\draw[thick] (0.1, 1.9) -- (0.9, 1.1);
\draw[thick] (-1.1, 0.9) -- (-1.9, 0.1);
\draw[thick] (-0.9, 0.9) -- (-0.1, 0.1);
\draw[thick] (0.9, 0.9) -- (0.1, 0.1);
\draw[thick] (1.1, 0.9) -- (1.9, 0.1);
\draw (0,2) circle [radius=0.2];
\draw (-1,1) circle [radius=0.2];
\draw[fill] (1,1) circle [radius=0.2];
\draw[fill] (-2,0) circle [radius=0.2];
\draw (0,0) circle [radius=0.2];
\draw (2,0) circle [radius=0.2];
\end{scope}
\node at (3.5,1) {$\stackrel{\up^{-1}}{\longmapsto}$};
\begin{scope}[shift={(7,0)}]
\draw[thick] (-0.1, 1.9) -- (-0.9, 1.1);
\draw[thick] (0.1, 1.9) -- (0.9, 1.1);
\draw[thick] (-1.1, 0.9) -- (-1.9, 0.1);
\draw[thick] (-0.9, 0.9) -- (-0.1, 0.1);
\draw[thick] (0.9, 0.9) -- (0.1, 0.1);
\draw[thick] (1.1, 0.9) -- (1.9, 0.1);
\draw (0,2) circle [radius=0.2];
\draw (-1,1) circle [radius=0.2];
\draw[fill] (1,1) circle [radius=0.2];
\draw[fill] (-2,0) circle [radius=0.2];
\draw[fill] (0,0) circle [radius=0.2];
\draw[fill] (2,0) circle [radius=0.2];
\end{scope}
\node at (10.5,1) {$\stackrel{\Theta}{\longmapsto}$};
\begin{scope}[shift={(14,0)}]
\draw[thick] (-0.1, 1.9) -- (-0.9, 1.1);
\draw[thick] (0.1, 1.9) -- (0.9, 1.1);
\draw[thick] (-1.1, 0.9) -- (-1.9, 0.1);
\draw[thick] (-0.9, 0.9) -- (-0.1, 0.1);
\draw[thick] (0.9, 0.9) -- (0.1, 0.1);
\draw[thick] (1.1, 0.9) -- (1.9, 0.1);
\draw[fill] (0,2) circle [radius=0.2];
\draw[fill] (-1,1) circle [radius=0.2];
\draw (1,1) circle [radius=0.2];
\draw (-2,0) circle [radius=0.2];
\draw (0,0) circle [radius=0.2];
\draw (2,0) circle [radius=0.2];
\end{scope}
\node at (17.5,1) {$\stackrel{\down}{\longmapsto}$};
\begin{scope}[shift={(21,0)}]
\draw[thick] (-0.1, 1.9) -- (-0.9, 1.1);
\draw[thick] (0.1, 1.9) -- (0.9, 1.1);
\draw[thick] (-1.1, 0.9) -- (-1.9, 0.1);
\draw[thick] (-0.9, 0.9) -- (-0.1, 0.1);
\draw[thick] (0.9, 0.9) -- (0.1, 0.1);
\draw[thick] (1.1, 0.9) -- (1.9, 0.1);
\draw (0,2) circle [radius=0.2];
\draw[fill] (-1,1) circle [radius=0.2];
\draw (1,1) circle [radius=0.2];
\draw (-2,0) circle [radius=0.2];
\draw (0,0) circle [radius=0.2];
\draw (2,0) circle [radius=0.2];
\end{scope}
\end{tikzpicture}
\end{center}
\end{example}

\subsection{The order-ideal toggle group}
The map $\rowJ$ can also be written a composition of involutions on $\calj (P)$ called
\emph{toggles}, as first shown by Cameron and Fon-Der-Flaass~\cite{cameronfonder}. 

\begin{definition}[\cite{cameronfonder}]\label{def:comb-row}
For $v\in P$, the \textbf{order-ideal toggle} corresponding to $v$ is the map
$T_v: \calj(P)\ra \calj(P)$
defined by
$$T_v(I)=\left\{\begin{array}{ll}
I\cup\{v\} &\text{if $v\not\in I$ and $I\cup\{v\}\in \calj(P)$,}\\
I\sm\{v\} &\text{if $v\in I$ and $I\sm\{v\}\in \calj(P)$,}\\
I &\text{otherwise.}
\end{array}\right.$$
Let $\tog_\calj(P)$ denote the \textbf{toggle group} of $\calj(P)$, i.e., the subgroup of
$\frak{S}_{\calj (P)}$  (the symmetric group on $\calj (P)$) generated by the toggles $\{T_v : v\in P\}$.
\end{definition}

The toggle $T_v$ either adds or removes $v$ from the order ideal if the resulting set is still an order ideal, and otherwise does nothing.
 

\begin{defn}[{\cite[\S3.5]{ec1ed2}}]
A sequence $(x_1,x_2,\dots,x_n)$ containing all of the elements of a finite poset $P$ exactly once is called a \textbf{linear extension} of $P$ if it is order-preserving, that is, whenever $x_i<x_j$ in $P$ then $i<j$.
\end{defn}

\begin{prop}[\cite{cameronfonder}]\label{prop:row-toggles}
For any linear extension $(x_1,x_2,\dots,x_n)$ of $P$, order-ideal rowmotion is given
by $\rowJ = T_{x_1} T_{x_2} \cdots T_{x_n}$.
\end{prop}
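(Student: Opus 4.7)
The plan is to proceed by reverse induction on the number of toggles applied. For a fixed $I\in\calj(P)$, define the intermediate states
\[
J_k \;:=\; T_{x_{k+1}}T_{x_{k+2}}\cdots T_{x_n}(I) \qquad (0\le k\le n),
\]
so that $J_n = I$ and the goal becomes $J_0 = \rowJ(I)$. The inductive claim I would carry along is the explicit formula
\[
J_k \;=\; \bigl(\rowJ(I)\cap\{x_{k+1},\ldots,x_n\}\bigr)\;\cup\;\bigl(I\cap\{x_1,\ldots,x_k\}\bigr),
\]
from which the proposition follows by setting $k=0$.

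A preliminary step is to verify that the right-hand side is actually an order ideal, so that the claim is even well-formed. The linear-extension hypothesis guarantees that every $z<x_i$ has strictly smaller index than $i$ and every $y>x_i$ has strictly larger index. Combined with the observation that any element of $\rowJ(I)\setminus I$ is itself a minimal element of $P\setminus I$ (so no strict upper bound of it can also lie in $\rowJ(I)$, since two minimal elements of $P\setminus I$ cannot be comparable), one checks directly that the proposed union is downward-closed.

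For the inductive step, $T_{x_{k+1}}$ can only affect membership of $x_{k+1}$, so the task reduces to showing that it flips the status of $x_{k+1}$ precisely when $x_{k+1}\in I$ and $x_{k+1}\in\rowJ(I)$ disagree, and does nothing when they agree. This is a four-case analysis. In each case, the linear-extension property implies that in $J_{k+1}$ the elements above $x_{k+1}$ match their status in $\rowJ(I)$ while those below $x_{k+1}$ match their status in $I$. Whether $T_{x_{k+1}}$ is able to flip the membership of $x_{k+1}$ is then governed by two elementary equivalences: (a) if $x_{k+1}\notin I$, then $x_{k+1}\in\rowJ(I)$ iff every $z<x_{k+1}$ lies in $I$ (i.e.\ iff $x_{k+1}$ is itself minimal in $P\setminus I$); and (b) if $x_{k+1}\in I$, then $x_{k+1}\in\rowJ(I)$ iff some $y>x_{k+1}$ lies in $\rowJ(I)$. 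Both equivalences follow in a few lines from Definition~\ref{def:OIrow} together with the fact that $\rowJ(I)$ is itself an order ideal.

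The main technical effort is the four-case check, in particular ensuring that the ``toggle does nothing'' branches arise exactly when and only when they must; but once (a) and (b) are in hand, each case collapses to a line or two. No structure beyond Definition~\ref{def:OIrow} and the order-preservation of $(x_1,\ldots,x_n)$ is required.
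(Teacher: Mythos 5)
Your proof is correct, and it is essentially the standard argument (the paper itself cites \cite{cameronfonder} rather than reproducing a proof). The invariant
\[
J_k = \bigl(\rowJ(I)\cap\{x_{k+1},\ldots,x_n\}\bigr)\cup\bigl(I\cap\{x_1,\ldots,x_k\}\bigr)
\]
is the right thing to carry through the reverse induction, and the two equivalences (a) and (b) — together with the observation that elements of $\rowJ(I)\setminus I$ are precisely the minimal elements of $P\setminus I$ — are exactly what is needed to drive the four-case check. One small point worth spelling out if you write this up formally: in Cases~1 and~4 the toggle condition involves \emph{cover} relations $y\gtrdot x_{k+1}$ or $z\lessdot x_{k+1}$, while (a) and (b) speak of arbitrary $y>x_{k+1}$ or $z<x_{k+1}$; the gap is closed because $I$ and $\rowJ(I)$ are order ideals, so a witness can always be replaced by one along a saturated chain adjacent to $x_{k+1}$. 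That detail aside, the argument is complete.
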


\begin{example}\label{ex:row-toggles}
For the poset
$P=[2]\times[3]$
of Example~\ref{ex:a3-row}, as labeled below, $(a,b,c,d,e,f)$ gives a linear extension.
We show the effect of applying $T_aT_bT_cT_dT_eT_f$ to the order ideal considered in Example~\ref{ex:a3-row}.
In each step, we indicate the element whose toggle we apply next in {\color{red}red}.
Notice that the outcome is the same order ideal we obtained by the three step process,
demonstrating Proposition~\ref{prop:row-toggles}. 
\begin{center}
\begin{tikzpicture}[scale=0.34]
\begin{scope}[shift={(0,-4)}]
\draw[thick] (-0.1, 1.9) -- (-0.9, 1.1);
\draw[thick] (0.1, 1.9) -- (0.9, 1.1);
\draw[thick] (-1.1, 0.9) -- (-1.9, 0.1);
\draw[thick] (-0.9, 0.9) -- (-0.1, 0.1);
\draw[thick] (0.9, 0.9) -- (0.1, 0.1);
\draw[thick] (1.1, 0.9) -- (1.9, 0.1);
\draw[red] (0,2) circle [radius=0.2];
\draw (-1,1) circle [radius=0.2];
\draw[fill] (1,1) circle [radius=0.2];
\draw[fill] (-2,0) circle [radius=0.2];
\draw[fill] (0,0) circle [radius=0.2];
\draw[fill] (2,0) circle [radius=0.2];
\node[above] at (0,2) {$f$};
\node[left] at (-1,1) {$d$};
\node[right] at (1,1) {$e$};
\node[below] at (-2,0) {$a$};
\node[below] at (0,0) {$b$};
\node[below] at (2,0) {$c$};
\end{scope}
\node at (3.5,-3) {$\stackrel{T_f}{\longmapsto}$};
\begin{scope}[shift={(7,-4)}]
\draw[thick] (-0.1, 1.9) -- (-0.9, 1.1);
\draw[thick] (0.1, 1.9) -- (0.9, 1.1);
\draw[thick] (-1.1, 0.9) -- (-1.9, 0.1);
\draw[thick] (-0.9, 0.9) -- (-0.1, 0.1);
\draw[thick] (0.9, 0.9) -- (0.1, 0.1);
\draw[thick] (1.1, 0.9) -- (1.9, 0.1);
\draw (0,2) circle [radius=0.2];
\draw (-1,1) circle [radius=0.2];
\draw[red,fill] (1,1) circle [radius=0.2];
\draw[fill] (-2,0) circle [radius=0.2];
\draw[fill] (0,0) circle [radius=0.2];
\draw[fill] (2,0) circle [radius=0.2];
\node[above] at (0,2) {$f$};
\node[left] at (-1,1) {$d$};
\node[right] at (1,1) {$e$};
\node[below] at (-2,0) {$a$};
\node[below] at (0,0) {$b$};
\node[below] at (2,0) {$c$};
\end{scope}
\node at (10.5,-3) {$\stackrel{T_e}{\longmapsto}$};
\begin{scope}[shift={(14,-4)}]
\draw[thick] (-0.1, 1.9) -- (-0.9, 1.1);
\draw[thick] (0.1, 1.9) -- (0.9, 1.1);
\draw[thick] (-1.1, 0.9) -- (-1.9, 0.1);
\draw[thick] (-0.9, 0.9) -- (-0.1, 0.1);
\draw[thick] (0.9, 0.9) -- (0.1, 0.1);
\draw[thick] (1.1, 0.9) -- (1.9, 0.1);
\draw (0,2) circle [radius=0.2];
\draw[red] (-1,1) circle [radius=0.2];
\draw (1,1) circle [radius=0.2];
\draw[fill] (-2,0) circle [radius=0.2];
\draw[fill] (0,0) circle [radius=0.2];
\draw[fill] (2,0) circle [radius=0.2];
\node[above] at (0,2) {$f$};
\node[left] at (-1,1) {$d$};
\node[right] at (1,1) {$e$};
\node[below] at (-2,0) {$a$};
\node[below] at (0,0) {$b$};
\node[below] at (2,0) {$c$};
\end{scope}
\node at (17.5,-3) {$\stackrel{T_d}{\longmapsto}$};
\begin{scope}[shift={(21,-4)}]
\draw[thick] (-0.1, 1.9) -- (-0.9, 1.1);
\draw[thick] (0.1, 1.9) -- (0.9, 1.1);
\draw[thick] (-1.1, 0.9) -- (-1.9, 0.1);
\draw[thick] (-0.9, 0.9) -- (-0.1, 0.1);
\draw[thick] (0.9, 0.9) -- (0.1, 0.1);
\draw[thick] (1.1, 0.9) -- (1.9, 0.1);
\draw (0,2) circle [radius=0.2];
\draw[fill] (-1,1) circle [radius=0.2];
\draw (1,1) circle [radius=0.2];
\draw[fill] (-2,0) circle [radius=0.2];
\draw[fill] (0,0) circle [radius=0.2];
\draw[red,fill] (2,0) circle [radius=0.2];
\node[above] at (0,2) {$f$};
\node[left] at (-1,1) {$d$};
\node[right] at (1,1) {$e$};
\node[below] at (-2,0) {$a$};
\node[below] at (0,0) {$b$};
\node[below] at (2,0) {$c$};
\end{scope}
\node at (24.5,-3) {$\stackrel{T_c}{\longmapsto}$};
\begin{scope}[shift={(28,-4)}]
\draw[thick] (-0.1, 1.9) -- (-0.9, 1.1);
\draw[thick] (0.1, 1.9) -- (0.9, 1.1);
\draw[thick] (-1.1, 0.9) -- (-1.9, 0.1);
\draw[thick] (-0.9, 0.9) -- (-0.1, 0.1);
\draw[thick] (0.9, 0.9) -- (0.1, 0.1);
\draw[thick] (1.1, 0.9) -- (1.9, 0.1);
\draw (0,2) circle [radius=0.2];
\draw[fill] (-1,1) circle [radius=0.2];
\draw (1,1) circle [radius=0.2];
\draw[fill] (-2,0) circle [radius=0.2];
\draw[red,fill] (0,0) circle [radius=0.2];
\draw (2,0) circle [radius=0.2];
\node[above] at (0,2) {$f$};
\node[left] at (-1,1) {$d$};
\node[right] at (1,1) {$e$};
\node[below] at (-2,0) {$a$};
\node[below] at (0,0) {$b$};
\node[below] at (2,0) {$c$};
\end{scope}
\node at (31.5,-3) {$\stackrel{T_b}{\longmapsto}$};
\begin{scope}[shift={(35,-4)}]
\draw[thick] (-0.1, 1.9) -- (-0.9, 1.1);
\draw[thick] (0.1, 1.9) -- (0.9, 1.1);
\draw[thick] (-1.1, 0.9) -- (-1.9, 0.1);
\draw[thick] (-0.9, 0.9) -- (-0.1, 0.1);
\draw[thick] (0.9, 0.9) -- (0.1, 0.1);
\draw[thick] (1.1, 0.9) -- (1.9, 0.1);
\draw (0,2) circle [radius=0.2];
\draw[fill] (-1,1) circle [radius=0.2];
\draw (1,1) circle [radius=0.2];
\draw[red,fill] (-2,0) circle [radius=0.2];
\draw[fill] (0,0) circle [radius=0.2];
\draw (2,0) circle [radius=0.2];
\node[above] at (0,2) {$f$};
\node[left] at (-1,1) {$d$};
\node[right] at (1,1) {$e$};
\node[below] at (-2,0) {$a$};
\node[below] at (0,0) {$b$};
\node[below] at (2,0) {3};
\end{scope}
\node at (38.5,-3) {$\stackrel{T_a}{\longmapsto}$};
\begin{scope}[shift={(42,-4)}]
\draw[thick] (-0.1, 1.9) -- (-0.9, 1.1);
\draw[thick] (0.1, 1.9) -- (0.9, 1.1);
\draw[thick] (-1.1, 0.9) -- (-1.9, 0.1);
\draw[thick] (-0.9, 0.9) -- (-0.1, 0.1);
\draw[thick] (0.9, 0.9) -- (0.1, 0.1);
\draw[thick] (1.1, 0.9) -- (1.9, 0.1);
\draw (0,2) circle [radius=0.2];
\draw[fill] (-1,1) circle [radius=0.2];
\draw (1,1) circle [radius=0.2];
\draw[fill] (-2,0) circle [radius=0.2];
\draw[fill] (0,0) circle [radius=0.2];
\draw (2,0) circle [radius=0.2];
\node[above] at (0,2) {$f$};
\node[left] at (-1,1) {$d$};
\node[right] at (1,1) {$e$};
\node[below] at (-2,0) {$a$};
\node[below] at (0,0) {$b$};
\node[below] at (2,0) {$c$};
\end{scope}
\end{tikzpicture}
\end{center}
\end{example}

\subsection{The antichain toggle group}
Toggling makes sense in a broader context, as formalized by Striker~\cite{strikergentog}. We can define \emph{antichain toggles} on $\cala(P)$, by replacing $\calj(P)$ with $\cala(P)$ in the definition.  Removing any element from an antichain always yields an antichain, giving a simpler second case.  
\begin{defn}[\cite{strikergentog}]\label{def:comb-tau}
Let $v\in P$.  Then the \textbf{antichain toggle} corresponding to $v$ is the map $\tau_v: \cala(P)\ra \cala(P)$
defined by
$$\tau_v(A)=\left\{\begin{array}{ll}
A\cup\{v\} &\text{if $v\not\in A$ and $A\cup\{v\}\in\cala(P)$,}\\
A\sm\{v\} &\text{if $v\in A$,}\\
A &\text{otherwise.}
\end{array}\right.$$
Let $\tog_\cala(P)$ denote the \textbf{toggle group} of $\cala(P)$, i.e., the subgroup of
$\frak{S}_{\cala (P)}$  (the symmetric group on $\cala (P)$) generated by the toggles $\{\tau_v : v\in P\}$.
\end{defn}
The first author constructed an explicit isomorphism between the toggle groups $\tog_\calj(P)$ and
$\tog_\cala(P)$~\cite{antichain-toggling}.
A consequence of this isomorphism is that antichain rowmotion is also a product of antichain toggles in an order specified by a linear extension, but starting at the bottom and moving upwards.

\begin{prop}[{\cite[Prop.~2.24]{antichain-toggling}}]\label{prop:row-toggles-anti}
For any linear extension $(x_1,x_2,\dots,x_n)$ of $P$,  antichain rowmotion is given by
$\rowA = \tau_{x_n} \cdots \tau_{x_2} \tau_{x_1}$.
\end{prop}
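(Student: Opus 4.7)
The plan is to establish a stronger inductive invariant. I would set $A_0 := A$ and $A_k := \tau_{x_k}(A_{k-1})$, and prove by induction on $k$ that $A_k \cap \{x_1, \ldots, x_k\} = \rowA(A) \cap \{x_1, \ldots, x_k\}$ while $A_k$ agrees with $A$ on $\{x_{k+1}, \ldots, x_n\}$. Taking $k = n$ then yields $A_n = \rowA(A)$, which is the claim. The agreement of $A_k$ with $A$ on $\{x_{k+1}, \ldots, x_n\}$ is automatic, since each antichain toggle $\tau_{x_j}$ only alters the membership of its own index $x_j$.

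The essential inductive step is to verify $x_k \in A_k$ iff $x_k \in \rowA(A)$, as $A_k$ agrees with $A_{k-1}$ on $\{x_1, \ldots, x_{k-1}\}$. Since $\rowA(A) = \down(\Theta(\up^{-1}(A)))$, we have $x_k \in \rowA(A)$ exactly when $x_k$ is a minimal element of the order filter $P \sm \up^{-1}(A)$. On the toggle side, $x_k \in A_k$ requires $x_k \notin A_{k-1}$ (equivalently $x_k \notin A$, since $x_k$ has not yet been toggled) together with $A_{k-1} \cup \{x_k\}$ being an antichain. Using the linear extension, any element of $A_{k-1}$ strictly above $x_k$ must come from $A \cap \{x_{k+1}, \ldots, x_n\}$, so ruling these out (together with $x_k \notin A$) is equivalent to $x_k \notin \up^{-1}(A)$. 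Elements of $A_{k-1}$ strictly below $x_k$ must lie in $A_{k-1} \cap \{x_1, \ldots, x_{k-1}\}$, which by the inductive hypothesis equals $\rowA(A) \cap \{x_1, \ldots, x_{k-1}\}$.

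The key technical point I then need is the equivalence: no element of $\rowA(A)$ is strictly below $x_k$ iff no element of $P \sm \up^{-1}(A)$ is strictly below $x_k$. The backward direction is immediate since $\rowA(A) \subseteq P \sm \up^{-1}(A)$. For the forward direction, I would argue that any $y < x_k$ in $P \sm \up^{-1}(A)$ admits a minimal element $y^*$ of $P \sm \up^{-1}(A)$ with $y^* \leq y$; then $y^* \in \rowA(A)$ and $y^* < x_k$, giving the contrapositive. Combining these pieces closes the induction.

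The main obstacle is modest---mostly careful bookkeeping of which elements of $A_{k-1}$ lie above versus below $x_k$ in $P$, so that the antichain constraint cleanly splits into an ``upper'' part (controlled directly by $A$) and a ``lower'' part (controlled by $\rowA(A)$ via the inductive hypothesis). Alternatively, I could sidestep the direct argument by invoking the explicit toggle-group isomorphism $\tog_\calj(P) \cong \tog_\cala(P)$ established in~\cite{antichain-toggling} and translating Proposition~\ref{prop:row-toggles} under it, which is the route taken in the original proof of Proposition~2.24 of~\cite{antichain-toggling}.
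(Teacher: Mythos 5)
The paper does not actually prove this proposition; it imports it with a citation to Prop.~2.24 of~\cite{antichain-toggling}, whose original proof runs through the explicit toggle-group isomorphism $\tog_\calj(P) \cong \tog_\cala(P)$ (and indeed that structural route is what the present paper generalizes to the birational and noncommutative realms). Your direct inductive argument is correct and is a genuinely different, more elementary approach. The bookkeeping works: by the linear-extension property, every element of $A_{k-1}$ comparable to $x_k$ and above it has index $>k$ (so equals an element of $A$), while every one below has index $<k$ (so, by the inductive invariant, equals an element of $\rowA(A)$); conditions (i) $x_k\notin A$ and (ii) no $y\in A$ with $y>x_k$ together say $x_k\notin\up^{-1}(A)$, and your forward direction of the ``key technical point'' — that absence of elements of $\rowA(A)$ below $x_k$ forces absence of elements of the filter $P\sm\up^{-1}(A)$ below $x_k$, because any element of a finite order filter sits above some minimal element of that filter — is exactly the finiteness argument needed. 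What the two approaches buy differs: your argument is self-contained and requires no prior machinery beyond the transfer-map description $\rowA=\down\circ\Theta\circ\up^{-1}$, but it is tied to the combinatorial (set-theoretic) realm and does not obviously lift; the isomorphism route is more infrastructure up front, but it is the one that survives the passage to piecewise-linear, birational, and noncommutative labelings, which is precisely why this paper adopts it.
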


\begin{example}\label{ex:row-toggles-anti}
For the poset
$P=[2]\times[3]$, $(a,b,c,d,e,f)$ gives a linear extension.
We show the effect of applying $\tau_f \tau_e \tau_d \tau_c \tau_b \tau_a$ to the order ideal considered in Example~\ref{ex:a3-row}.
In each step, we indicate the element whose toggle we apply next in {\color{red}red}.
Notice that the outcome is the same order ideal we obtained by the three step process, demonstrating Proposition~\ref{prop:row-toggles-anti}.

\begin{center}
\begin{tikzpicture}[scale=0.34]
\begin{scope}[shift={(0,-4)}]
\draw[thick] (-0.1, 1.9) -- (-0.9, 1.1);
\draw[thick] (0.1, 1.9) -- (0.9, 1.1);
\draw[thick] (-1.1, 0.9) -- (-1.9, 0.1);
\draw[thick] (-0.9, 0.9) -- (-0.1, 0.1);
\draw[thick] (0.9, 0.9) -- (0.1, 0.1);
\draw[thick] (1.1, 0.9) -- (1.9, 0.1);
\draw (0,2) circle [radius=0.2];
\draw (-1,1) circle [radius=0.2];
\draw[fill] (1,1) circle [radius=0.2];
\draw[red,fill] (-2,0) circle [radius=0.2];
\draw (0,0) circle [radius=0.2];
\draw (2,0) circle [radius=0.2];
\node[above] at (0,2) {$f$};
\node[left] at (-1,1) {$d$};
\node[right] at (1,1) {$e$};
\node[below] at (-2,0) {$a$};
\node[below] at (0,0) {$b$};
\node[below] at (2,0) {$c$};
\end{scope}
\node at (3.5,-3) {$\stackrel{\tau_a}{\longmapsto}$};
\begin{scope}[shift={(7,-4)}]
\draw[thick] (-0.1, 1.9) -- (-0.9, 1.1);
\draw[thick] (0.1, 1.9) -- (0.9, 1.1);
\draw[thick] (-1.1, 0.9) -- (-1.9, 0.1);
\draw[thick] (-0.9, 0.9) -- (-0.1, 0.1);
\draw[thick] (0.9, 0.9) -- (0.1, 0.1);
\draw[thick] (1.1, 0.9) -- (1.9, 0.1);
\draw (0,2) circle [radius=0.2];
\draw (-1,1) circle [radius=0.2];
\draw[fill] (1,1) circle [radius=0.2];
\draw (-2,0) circle [radius=0.2];
\draw[red] (0,0) circle [radius=0.2];
\draw (2,0) circle [radius=0.2];
\node[above] at (0,2) {$f$};
\node[left] at (-1,1) {$d$};
\node[right] at (1,1) {$e$};
\node[below] at (-2,0) {$a$};
\node[below] at (0,0) {$b$};
\node[below] at (2,0) {$c$};
\end{scope}
\node at (10.5,-3) {$\stackrel{\tau_b}{\longmapsto}$};
\begin{scope}[shift={(14,-4)}]
\draw[thick] (-0.1, 1.9) -- (-0.9, 1.1);
\draw[thick] (0.1, 1.9) -- (0.9, 1.1);
\draw[thick] (-1.1, 0.9) -- (-1.9, 0.1);
\draw[thick] (-0.9, 0.9) -- (-0.1, 0.1);
\draw[thick] (0.9, 0.9) -- (0.1, 0.1);
\draw[thick] (1.1, 0.9) -- (1.9, 0.1);
\draw (0,2) circle [radius=0.2];
\draw (-1,1) circle [radius=0.2];
\draw[fill] (1,1) circle [radius=0.2];
\draw (-2,0) circle [radius=0.2];
\draw (0,0) circle [radius=0.2];
\draw[red] (2,0) circle [radius=0.2];
\node[above] at (0,2) {$f$};
\node[left] at (-1,1) {$d$};
\node[right] at (1,1) {$e$};
\node[below] at (-2,0) {$a$};
\node[below] at (0,0) {$b$};
\node[below] at (2,0) {$c$};
\end{scope}
\node at (17.5,-3) {$\stackrel{\tau_c}{\longmapsto}$};
\begin{scope}[shift={(21,-4)}]
\draw[thick] (-0.1, 1.9) -- (-0.9, 1.1);
\draw[thick] (0.1, 1.9) -- (0.9, 1.1);
\draw[thick] (-1.1, 0.9) -- (-1.9, 0.1);
\draw[thick] (-0.9, 0.9) -- (-0.1, 0.1);
\draw[thick] (0.9, 0.9) -- (0.1, 0.1);
\draw[thick] (1.1, 0.9) -- (1.9, 0.1);
\draw (0,2) circle [radius=0.2];
\draw[red] (-1,1) circle [radius=0.2];
\draw[fill] (1,1) circle [radius=0.2];
\draw (-2,0) circle [radius=0.2];
\draw (0,0) circle [radius=0.2];
\draw (2,0) circle [radius=0.2];
\node[above] at (0,2) {$f$};
\node[left] at (-1,1) {$d$};
\node[right] at (1,1) {$e$};
\node[below] at (-2,0) {$a$};
\node[below] at (0,0) {$b$};
\node[below] at (2,0) {$c$};
\end{scope}
\node at (24.5,-3) {$\stackrel{\tau_d}{\longmapsto}$};
\begin{scope}[shift={(28,-4)}]
\draw[thick] (-0.1, 1.9) -- (-0.9, 1.1);
\draw[thick] (0.1, 1.9) -- (0.9, 1.1);
\draw[thick] (-1.1, 0.9) -- (-1.9, 0.1);
\draw[thick] (-0.9, 0.9) -- (-0.1, 0.1);
\draw[thick] (0.9, 0.9) -- (0.1, 0.1);
\draw[thick] (1.1, 0.9) -- (1.9, 0.1);
\draw (0,2) circle [radius=0.2];
\draw[fill] (-1,1) circle [radius=0.2];
\draw[red,fill] (1,1) circle [radius=0.2];
\draw (-2,0) circle [radius=0.2];
\draw (0,0) circle [radius=0.2];
\draw (2,0) circle [radius=0.2];
\node[above] at (0,2) {$f$};
\node[left] at (-1,1) {$d$};
\node[right] at (1,1) {$e$};
\node[below] at (-2,0) {$a$};
\node[below] at (0,0) {$b$};
\node[below] at (2,0) {$c$};
\end{scope}
\node at (31.5,-3) {$\stackrel{\tau_e}{\longmapsto}$};
\begin{scope}[shift={(35,-4)}]
\draw[thick] (-0.1, 1.9) -- (-0.9, 1.1);
\draw[thick] (0.1, 1.9) -- (0.9, 1.1);
\draw[thick] (-1.1, 0.9) -- (-1.9, 0.1);
\draw[thick] (-0.9, 0.9) -- (-0.1, 0.1);
\draw[thick] (0.9, 0.9) -- (0.1, 0.1);
\draw[thick] (1.1, 0.9) -- (1.9, 0.1);
\draw[red] (0,2) circle [radius=0.2];
\draw[fill] (-1,1) circle [radius=0.2];
\draw (1,1) circle [radius=0.2];
\draw (-2,0) circle [radius=0.2];
\draw (0,0) circle [radius=0.2];
\draw (2,0) circle [radius=0.2];
\node[above] at (0,2) {$f$};
\node[left] at (-1,1) {$d$};
\node[right] at (1,1) {$e$};
\node[below] at (-2,0) {$a$};
\node[below] at (0,0) {$b$};
\node[below] at (2,0) {$c$};
\end{scope}
\node at (38.5,-3) {$\stackrel{\tau_f}{\longmapsto}$};
\begin{scope}[shift={(42,-4)}]
\draw[thick] (-0.1, 1.9) -- (-0.9, 1.1);
\draw[thick] (0.1, 1.9) -- (0.9, 1.1);
\draw[thick] (-1.1, 0.9) -- (-1.9, 0.1);
\draw[thick] (-0.9, 0.9) -- (-0.1, 0.1);
\draw[thick] (0.9, 0.9) -- (0.1, 0.1);
\draw[thick] (1.1, 0.9) -- (1.9, 0.1);
\draw (0,2) circle [radius=0.2];
\draw[fill] (-1,1) circle [radius=0.2];
\draw (1,1) circle [radius=0.2];
\draw (-2,0) circle [radius=0.2];
\draw (0,0) circle [radius=0.2];
\draw (2,0) circle [radius=0.2];
\node[above] at (0,2) {$f$};
\node[left] at (-1,1) {$d$};
\node[right] at (1,1) {$e$};
\node[below] at (-2,0) {$a$};
\node[below] at (0,0) {$b$};
\node[below] at (2,0) {$c$};
\end{scope}
\end{tikzpicture}
\end{center}
\end{example}

\subsection{Piecewise-linear dynamics}
Now we generalize our actions from subsets of $P$ (i.e., $\{0,1 \}$-labelings of $P$)
to $\rr$-labelings of the elements of $P$; let $\rr^P:=\{f:P\rightarrow \rr \}$ denote the set of such labelings.
The toggling perspective allows us to extend these maps from the combinatorial realm
(on finite sets) to the piecewise-linear realm (polytopes whose vertices correspond to these
sets), and then lift to the birational realm by detropicalizing the operations~\cite{einpropp}.
The study of piecewise-linear dynamics begins with two polytopes introduced by Stanley~\cite{Sta86}, the \textit{order polytope} and the \textit{chain polytope} of $P$.
The vertices of these polytopes are the sets $\calf(P)$ of order filters and $\cala(P)$ of antichains (associating a subset of $P$ with its indicator function
labeling).  Einstein and Propp defined piecewise-linear toggle operations on the order
polytope that match the order-ideal toggle $T_v$ when restricted to the vertices (though here
we use order \emph{filters} instead of order ideals)~\cite[\S3,4]{einpropp}.

\begin{definition}[\cite{Sta86}]
Within $\rr^{P}$ the \textbf{order polytope} of $P$ is the set $\calo\calp(P)$ of labelings $f:P\ra [0,1]$ that are order-preserving (i.e., if $a\leq b$ in $P$, then $f(a)\leq f(b)$).
The \textbf{chain polytope} of $P$ is the set $\calc(P)$ of labelings $f:P\ra [0,1]$ such that the sum of the labels across every chain is at most 1.
\end{definition}

By associating a subset of $P$ with its indicator functions, the sets $\calf(P)$ and $\cala(P)$ describe the vertices of $\calo\calp(P)$ and $\calc(P)$ respectively~\cite{Sta86}.
Similarly, we can define an \textbf{order-reversing polytope} $\calo\calr(P)$ to consist of all labelings $f:P\ra [0,1]$ that are order-reversing (i.e., if $a\leq b$ in $P$, then $f(a)\geq f(b)$).  The vertices of $\calo\calr(P)$ are the order ideals of $P$.
To define toggles, there is no important difference in defining them over $\calo\calr(P)$ as opposed to $\calo\calp(P)$.  The piecewise-linear order toggles are typically defined on $\calo\calp(P)$.

\begin{defn}[\cite{einpropp}]
Given a finite poset $P$, let $\widehat{P}$ denote the poset $P$ with the addition of two
elements, $\widehat{0}$ and $\widehat{1}$, satisfying $\widehat{0}< v$ and $\widehat{1}>v$
for all $v\in P$.  
Let $v\in P$ and $f\in\calo\calp(P)$.
The \textbf{piecewise-linear order toggle} $T_v:\calo\calp(P)\ra \calo\calp(P)$ is
$$\big(T_v(f)\big)(x) =
\left\{\begin{array}{ll}
f(x) &\text{if }x\not=v\\
\max\limits_{y\lessdot v} f(y) + 
\min\limits_{y\gtrdot v} f(y) - f(v)
&\text{if }x=v\end{array}\right.
$$
where we set $f\big(\widehat{0}\big)=0$ and $f\big(\widehat{1}\big)=1$.
We use the notation $x\lessdot y$ to mean ``$y$ covers $x$'' and $x\gtrdot y$ to mean ``$x$
covers $y$''.  By using cover relations in $\widehat{P}$ we ensure that every element
of $P$ covers some element of $\widehat{P}$ and is covered by an element of $\widehat{P}$. 
The effect of this involution at $x=v$ is to replace the label at $v$ with the value obtained
by reflecting the allowable $\rr$-interval $\Big[\max\limits_{y\lessdot v} f(y), \min\limits_{y\gtrdot
v} f(y)\Big]$ through its midpoint.
\end{defn}

The first author defined the following generalization of antichain toggles to the chain
polytope $\calc(P)$~\cite[\S3]{antichain-toggling}, which matches our earlier definition
of $\tau_v$ when restricted to the vertices $\cala(P)$. 

\begin{defn}[\cite{antichain-toggling}]\label{def:PL-ant-tog}
For $v\in P$, let $\mc_v(P)$ denote the set of all maximal chains of $P$ through $v$.
The \textbf{piecewise-linear antichain toggle}
(or {\bf chain polytope toggle}) $\tau_v:\calc(P)\ra \calc(P)$ is
$$\big(\tau_v(g)\big)(x) =
\left\{\begin{array}{ll}
1-\max\left\{ \left.\sum\limits_{i=1}^k g(y_i) \right| (y_1,\dots,y_k)\in \mc_v(P) \right\}
&\text{if }x=v\\
g(x) &\text{if }x\not=v
\end{array}\right..
$$
\end{defn}

We have the following generalizations of complementation $\Theta$,
down-transfer $\down$, and up-transfer $\up$ (and their inverses)
to the piecewise-linear realm.
In fact the term ``down-transfer'' was chosen as it is equivalent here to
Stanley's transfer map, used to transfer properties (such as volume formulas) between $\calc(P)$
and $\calo\calp(P)$~\cite{Sta86}.

\begin{defn}[{\cite[\S4]{einpropp}}]
The maps $\Theta: \rr^P \ra \rr^P$, $\down: \calo\calp(P) \ra \calc(P)$,
$\up: \calo\calr(P) \ra \calc(P)$, and their inverses are given as follows.
\begin{align*}
    (\Theta f)(x) &= 1-f(x)\\
    (\down f)(x) &= f(x)-{\max\limits_{y\lessdot x} f(y)} \quad \big(\text{with } f\big(\widehat{0}\big)=0\big)\\
    (\up f)(x) &= 
    f(x) - {\max\limits_{y\gtrdot x} f(y)} \quad \big(\text{with } f\big(\widehat{1}\big)=0\big)\\
    \left(\down^{-1}f\right)(x) &=
    \max\left\{ f(y_1)+f(y_2)+\cdots +f(y_k):
    \widehat{0}\lessdot y_1\lessdot y_2 \lessdot \cdots \lessdot y_k=x \right\}\\
    &=f(x)+ \max\limits_{y\lessdot x}\left(\down^{-1}f\right)(y)\\
    \left(\up^{-1}f\right)(x) &=
    \max\left\{ f(y_1)+ f(y_2)+\cdots +f(y_k):x=y_1
\lessdot y_2\lessdot \cdots \lessdot y_k\lessdot \widehat{1}\right\}\\
    &=f(x)+ \max\limits_{y\gtrdot x}\left(\up^{-1}f\right)(y)
\end{align*}
\end{defn}

For any linear extension $(x_1,x_2,\dots,x_n)$ of $P$,
Einstein and Propp showed that piecewise-linear order rowmotion
defined as $T_{x_1}T_{x_2}\cdots T_{x_n}$
is equivalent to the composition $\Theta \circ \up^{-1} \circ \down$ (a consequence of
their proof at the birational level). 
The first author showed that piecewise-linear antichain rowmotion can be defined either as
$\tau_{x_n}\cdots \tau_{x_2}\tau_{x_1}$ or as
$\down\circ\Theta \circ \up^{-1}$, as in the combinatorial realm.
For details, see~\cite[\S4, \S6]{einpropp} and~\cite[\S3.3, \S3.4]{antichain-toggling}.

\subsection{Birational dynamics}

We now detropicalize the piecewise-linear order toggles to birational
toggles over an arbitrary field $\kk$ of characteristic zero in the usual way: replacing
the $\max$ operation with addition, 
addition with multiplication, 
subtraction with division, and
the additive identity 0 with the multiplicative identity 1.
Additionally, we replace 1 with a generic fixed constant $C\in\kk$.
(The definition
in~\cite{einpropp} is slightly more general than we need here; set $\alpha=1$ and
$\omega=C$ in their version to get ours.  In this paper, we will be primarily interested in
birational antichain rowmotion, in which the two arbitrary constants $\alpha$ and $\omega$
would always appear together as $\omega/\alpha$; thus, one constant $C$ is sufficient.) 

\begin{defn}[{\cite[Definition~5.1]{einpropp}}]
Let $\kk^P:=\{f:P\rightarrow \kk \}$ be the set of $\kk$-labelings of the elements of $P$.
For $v\in P$, the \textbf{birational order toggle} at $v$ is the
birational map
$T_v: \kk^P \dra \kk^P$ given by
\[
\ds \big(T_v(f)\big)(x) =
\left\{\begin{array}{ll}
f(x) &\text{if }x\not=v\\\vspace{-7 pt}\\
\dfrac{\sum\limits_{y\in\widehat{P},y\lessdot v} f(y)}
{f(v) \sum\limits_{y\in\widehat{P},y\gtrdot v} \frac{1}{f(y)}}
&\text{if }x=v\end{array}\right.
\]
where we set $f\big(\widehat{0}\big)=1$ and $f\big(\widehat{1}\big)=C$.
\end{defn}
The birational order toggles $\{T_v: v\in P\}$
generate the {\bf birational order toggle group}, a subgroup of the group of
birational automorphisms of $\kk^{P}$, which we denote $\btog_O(P)$.
The following shows that basic properties of order-ideal toggles lift to the birational realm.

\begin{prop}[\cite{einpropp,GrRo15}]\label{prop:BTogO-toggle-inv-commute}
Each toggle $T_v$ is an involution (i.e., $T_v^2$ is the identity).  Two toggles $T_u,T_v$ commute if and only if neither $u$ nor $v$ covers the other.
\end{prop}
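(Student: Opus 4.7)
The plan is to verify both claims by direct manipulation of the defining rational formula, exploiting the fact that each toggle $T_v$ alters only the label at $v$.

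For the involution claim, let $g := T_v(f)$. Since $T_v$ modifies only the label at $v$, the sums
\[
A := \sum_{y \in \widehat{P},\, y \lessdot v} f(y) \quad \text{and} \quad B := \sum_{y \in \widehat{P},\, y \gtrdot v} \frac{1}{f(y)}
\]
involve only labels at elements distinct from $v$, so they are equal to the corresponding sums formed from $g$. By definition $g(v) = A/(f(v) B)$, so applying $T_v$ again gives $T_v(g)(v) = A/(g(v) B) = A / \bigl( A/(f(v)B) \cdot B \bigr) = f(v)$, and all other labels are untouched. Hence $T_v^2$ is the identity.

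For the commuting claim, first suppose that neither of $u, v$ covers the other. Then $u$ does not appear among the cover-neighbors of $v$ in $\widehat{P}$, so neither $f(u)$ nor the identity of $u$ enters the formula for $T_v$, and symmetrically for $T_u$. Consequently $T_u$ leaves $A, B$, and $f(v)$ unchanged, so $(T_v T_u f)(v) = (T_u T_v f)(v)$, and symmetrically at $u$; all other labels are untouched by either toggle. Thus $T_u T_v = T_v T_u$ on $\kk^P$.

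The main obstacle is the converse: if (say) $u \gtrdot v$, we must show that $T_u$ and $T_v$ genuinely fail to commute as birational maps. Note that $f(u)$ appears in the denominator-sum of the formula for $T_v(f)(v)$, while $f(v)$ appears in the numerator-sum of the formula for $T_u(f)(u)$, so the order of application matters. The cleanest way to close this is to exhibit a labeling on which the two compositions disagree; it suffices to restrict attention to the $2$-chain $\{v < u\}$ (viewed as a subposet), where a short computation shows $(T_u T_v f)(v) = f(u)/f(v)$ while $(T_v T_u f)(v) = C/f(u)$ (with $f(\widehat 0) = 1$, $f(\widehat 1) = C$), and these are distinct rational functions in the variables $f(v), f(u)$. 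For a general poset, one can either repeat the same two-element computation using the covers of $u$ and $v$ in $\widehat{P}$ (all other labels enter only as parameters) or specialize the remaining labels to generic constants to reduce to the above.
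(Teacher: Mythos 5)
The paper cites this proposition from~\cite{einpropp,GrRo15} without reproducing a proof, so there is no in-paper argument to compare against; the paper's proof of the analogous antichain statement (Proposition~\ref{prop:basic-tau-prop}) does, however, use the same direct-verification strategy you employ. Your involution check and the ``if'' direction of the commutativity claim are correct: $T_v$ rewrites only the label at $v$ and reads only the labels at cover-neighbors of $v$ in $\widehat{P}$, so if neither of $u,v$ covers the other the two toggles read and write disjoint data. The one loose step is the ``only if'' direction. Your $2$-chain computation is correct, but the remark that a general poset ``specializes'' to the $2$-chain is not literal: when $v$ has other covers besides $u$ (or $u$ covers other elements besides $v$), the relevant sums acquire extra terms that cannot simply be set to match the $2$-chain. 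The argument does close cleanly if you run the same computation in general. Writing $A_u=\sum_{y\lessdot u}f(y)$, $B_u=\sum_{y\gtrdot u}1/f(y)$, and $B_v=\frac{1}{f(u)}+\sum_{y\gtrdot v,\,y\neq u}\frac{1}{f(y)}$, one finds $(T_uT_vf)(v)=A_v/(f(v)B_v)$ while $(T_vT_uf)(v)=A_v/(f(v)B_v')$ with $B_v'$ obtained from $B_v$ by replacing $1/f(u)$ with $f(u)B_u/A_u$; equality would force $A_u=f(u)^2B_u$ identically, which fails because $f(v)$ is a summand of $A_u$ (as $v\lessdot u$) yet does not occur in $f(u)$ or in $B_u$ (every $y\gtrdot u$ satisfies $y>v$). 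Hence the two compositions already differ at $v$, and $T_u,T_v$ do not commute.
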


\begin{defn}[{\cite[Definition~5.2]{einpropp}}]
Let $(x_1,x_2,\dots,x_n)$ be any linear extension of $P$.
The birational analogue of order filter rowmotion, which we will call
\textbf{birational order rowmotion} (or \textbf{BOR-motion}),
is $\BOR= T_{x_1} T_{x_2} \cdots T_{x_n}$.  (Compare with
Proposition~\ref{prop:row-toggles}.)
\end{defn}
An annoying technicality here is that for some choices of labels it is possible that these
maps could lead to division by zero.  But for ``generic'' choices of labels (say with respect to the
Zariski topology) iterates of this map will be well-defined. This is discussed
carefully in \cite[\S1]{GrRo16} and \cite[\S3]{GrRo15}.  Alternatively, Einstein and Propp
make the choice to consider only \textit{positive} labelings, i.e., $(\rr_{>
0})^{\widehat{P}}$.

\subsection{Birational transfer maps}
By detropicalizing the operations in the transfer maps of the previous subsection, we get
their birational analogues.  Einstein and Propp prove~\cite[\S6]{einpropp} that, under the definitions below,
$\BOR=\Theta\circ\up^{-1}\circ\down$. 
These maps are composed in the order which lifts 
combinatorial rowmotion on \textit{order filters}.

\begin{defn}[{\cite[\S6]{einpropp}}]\label{def:bir-trans}
Let $f\in \kk^P$ and $x\in P$.  We define the following birational maps. 
Again, we call $\Theta$ {\bf complement}, $\down$ {\bf down transfer},
and $\up$ {\bf up transfer}.

\begin{align*}
    (\Theta f)(x) &= \frac{C}{f(x)}\\
    (\down f)(x) &= \frac{f(x)}{\sum\limits_{y\lessdot x} f(y)} \quad \big(\text{with } f\big(\widehat{0}\big)=1\big)\\
    (\up f)(x) &= 
    \frac{f(x)}{\sum\limits_{y\gtrdot x} f(y)} \quad \big(\text{with } f\big(\widehat{1}\big)=1\big)\\
    \left(\down^{-1}f\right)(x) &=
    \sum_{\widehat{0}\lessdot y_1\lessdot y_2 \lessdot \cdots \lessdot y_k=x} f(y_1)f(y_2)\cdots f(y_k)
            =f(x) \sum\limits_{y\lessdot x}\left(\down^{-1}f\right)(y)\\
    \left(\up^{-1}f\right)(x) &=
    \sum_{x=y_1 \lessdot y_2\lessdot \cdots \lessdot y_k\lessdot \widehat{1}} f(y_1)f(y_2)\cdots f(y_k)
              =f(x) \sum\limits_{y\gtrdot x}\left(\up^{-1}f\right)(y).
\end{align*}

\end{defn}

We use the same symbols in each realm (combinatorial, piecewise-linear, birational, and noncommutative), allowing context to clarify which is meant.
Examples of each map $\down$, $\down^{-1}$, $\up$, and $\up^{-1}$ are given in Figure~\ref{fig:transfer-example}.

\begin{figure}
\begin{tikzpicture}[xscale=13/9,yscale=8/9]
\begin{scope}
\draw[thick] (-0.3, 1.7) -- (-0.7, 1.3);
\draw[thick] (0.3, 1.7) -- (0.7, 1.3);
\draw[thick] (-1.3, 0.7) -- (-1.7, 0.3);
\draw[thick] (-0.7, 0.7) -- (-0.3, 0.3);
\draw[thick] (0.7, 0.7) -- (0.3, 0.3);
\draw[thick] (1.3, 0.7) -- (1.7, 0.3);
\node at (0,2) {$  z  $};
\node at (-1,1) {$  x  $};
\node at (1,1) {$  y  $};
\node at (-2,0) {$  u  $};
\node at (0,0) {$  v  $};
\node at (2,0) {$  w  $};
\node at (2.75,1) {$\stackrel{\down}{\longmapsto}$};
\end{scope}
\begin{scope}[shift={(5.5,0)}]
\draw[thick] (-0.3, 1.7) -- (-0.7, 1.3);
\draw[thick] (0.3, 1.7) -- (0.7, 1.3);
\draw[thick] (-1.3, 0.7) -- (-1.7, 0.3);
\draw[thick] (-0.7, 0.7) -- (-0.3, 0.3);
\draw[thick] (0.7, 0.7) -- (0.3, 0.3);
\draw[thick] (1.3, 0.7) -- (1.7, 0.3);
\node at (0,2) {\large$  \frac{z}{x+y}  $};
\node at (-1,1) {\large$  \frac{x}{u+v}  $};
\node at (1,1) {\large$  \frac{y}{v+w}  $};
\node at (-2,0) {$  u  $};
\node at (0,0) {$  v  $};
\node at (2,0) {$  w  $};
\end{scope}
\begin{scope}[shift={(0,-3.5)}]
\draw[thick] (-0.3, 1.7) -- (-0.7, 1.3);
\draw[thick] (0.3, 1.7) -- (0.7, 1.3);
\draw[thick] (-1.3, 0.7) -- (-1.7, 0.3);
\draw[thick] (-0.7, 0.7) -- (-0.3, 0.3);
\draw[thick] (0.7, 0.7) -- (0.3, 0.3);
\draw[thick] (1.3, 0.7) -- (1.7, 0.3);
\node at (0,2) {$  z(ux+vx+vy+wy)  $};
\node at (-1,1) {$  x(u+v)  $};
\node at (1,1) {$  y(v+w)  $};
\node at (-2,0) {$  u  $};
\node at (0,0) {$  v  $};
\node at (2,0) {$  w  $};
\node at (2.75,1) {$\stackrel{\down^{-1}}{\longmapsfrom}$};
\end{scope}
\begin{scope}[shift={(5.5,-3.5)}]
\draw[thick] (-0.3, 1.7) -- (-0.7, 1.3);
\draw[thick] (0.3, 1.7) -- (0.7, 1.3);
\draw[thick] (-1.3, 0.7) -- (-1.7, 0.3);
\draw[thick] (-0.7, 0.7) -- (-0.3, 0.3);
\draw[thick] (0.7, 0.7) -- (0.3, 0.3);
\draw[thick] (1.3, 0.7) -- (1.7, 0.3);
\node at (0,2) {$  z  $};
\node at (-1,1) {$  x  $};
\node at (1,1) {$  y  $};
\node at (-2,0) {$  u  $};
\node at (0,0) {$  v  $};
\node at (2,0) {$  w  $};
\end{scope}
\begin{scope}[shift={(0,-7)}]
\draw[thick] (-0.3, 1.7) -- (-0.7, 1.3);
\draw[thick] (0.3, 1.7) -- (0.7, 1.3);
\draw[thick] (-1.3, 0.7) -- (-1.7, 0.3);
\draw[thick] (-0.7, 0.7) -- (-0.3, 0.3);
\draw[thick] (0.7, 0.7) -- (0.3, 0.3);
\draw[thick] (1.3, 0.7) -- (1.7, 0.3);
\node at (0,2) {$  z  $};
\node at (-1,1) {$  x  $};
\node at (1,1) {$  y  $};
\node at (-2,0) {$  u  $};
\node at (0,0) {$  v  $};
\node at (2,0) {$  w  $};
\node at (2.75,1) {$\stackrel{\up}{\longmapsto}$};
\end{scope}
\begin{scope}[shift={(5.5,-7)}]
\draw[thick] (-0.3, 1.7) -- (-0.7, 1.3);
\draw[thick] (0.3, 1.7) -- (0.7, 1.3);
\draw[thick] (-1.3, 0.7) -- (-1.7, 0.3);
\draw[thick] (-0.7, 0.7) -- (-0.3, 0.3);
\draw[thick] (0.7, 0.7) -- (0.3, 0.3);
\draw[thick] (1.3, 0.7) -- (1.7, 0.3);
\node at (0,2) {$  z  $};
\node at (-1,1) {\large$  \frac{x}{z}  $};
\node at (1,1) {\large$  \frac{y}{z}  $};
\node at (-2,0) {\large$  \frac{u}{x}  $};
\node at (0,0) {\large$  \frac{v}{x+y}  $};
\node at (2,0) {\large$  \frac{w}{y}  $};
\end{scope}
\begin{scope}[shift={(0,-10.5)}]
\draw[thick] (-0.3, 1.7) -- (-0.7, 1.3);
\draw[thick] (0.3, 1.7) -- (0.7, 1.3);
\draw[thick] (-1.3, 0.7) -- (-1.7, 0.3);
\draw[thick] (-0.7, 0.7) -- (-0.3, 0.3);
\draw[thick] (0.7, 0.7) -- (0.3, 0.3);
\draw[thick] (1.3, 0.7) -- (1.7, 0.3);
\node at (0,2) {$  z  $};
\node at (-1,1) {$  xz  $};
\node at (1,1) {$  yz  $};
\node at (-2,0) {$  uxz  $};
\node at (0,0) {$  v(x+y)z $};
\node at (2,0) {$  wyz  $};
\node at (2.75,1) {$\stackrel{\up^{-1}}{\longmapsfrom}$};
\end{scope}
\begin{scope}[shift={(5.5,-10.5)}]
\draw[thick] (-0.3, 1.7) -- (-0.7, 1.3);
\draw[thick] (0.3, 1.7) -- (0.7, 1.3);
\draw[thick] (-1.3, 0.7) -- (-1.7, 0.3);
\draw[thick] (-0.7, 0.7) -- (-0.3, 0.3);
\draw[thick] (0.7, 0.7) -- (0.3, 0.3);
\draw[thick] (1.3, 0.7) -- (1.7, 0.3);
\node at (0,2) {$  z  $};
\node at (-1,1) {$  x  $};
\node at (1,1) {$  y  $};
\node at (-2,0) {$  u  $};
\node at (0,0) {$  v  $};
\node at (2,0) {$  w  $};
\end{scope}
\end{tikzpicture}
\caption{An example of each of the birational
maps $\down$, $\down^{-1}$, $\up$, and $\up^{-1}$ on the positive root poset $\Phi^+(A_3)$.}
\label{fig:transfer-example}
\end{figure}
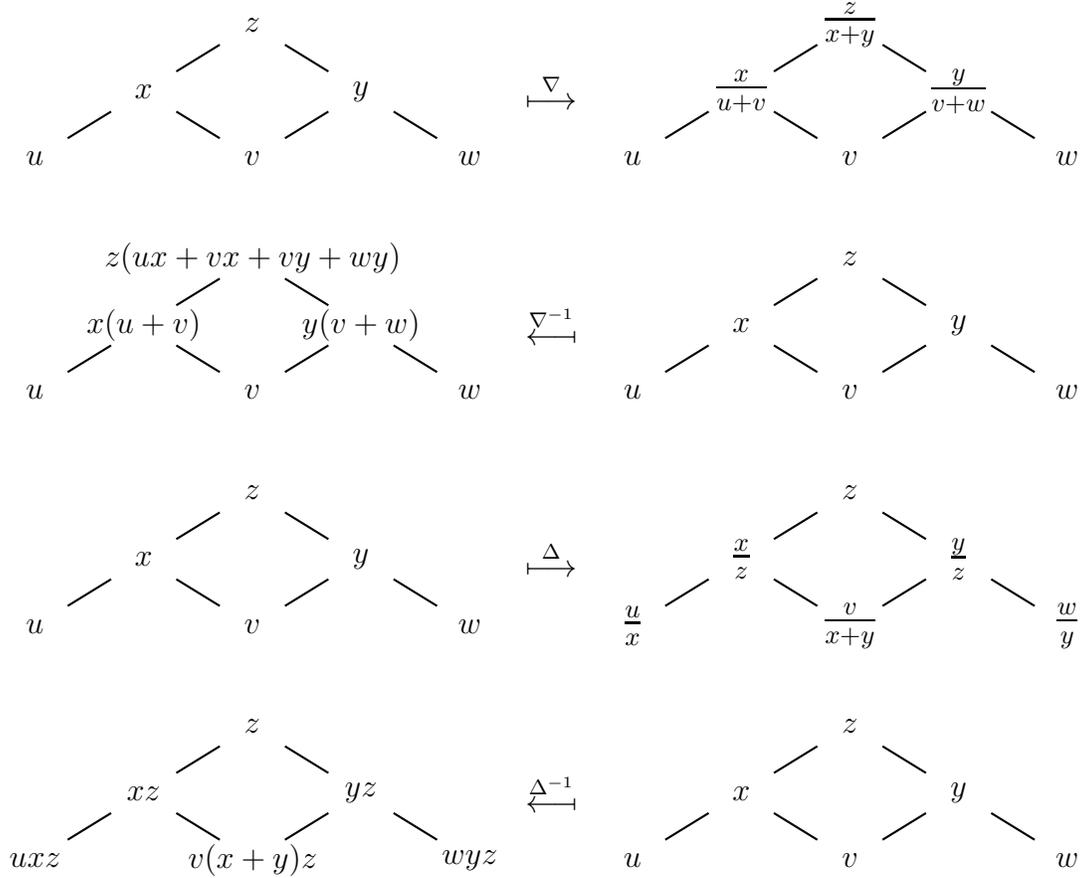

\section{Birational antichain toggling and rowmotion}\label{sec:batar}
\subsection{The birational antichain toggle group}
Now we will combine the different generalizations of toggling and study a new birational analogue of antichain toggling and rowmotion.
Again, we fix a field $\kk$ of characteristic zero.
In the combinatorial realm, we could define rowmotion on $\calj(P)$ or $\cala(P)$ either as
a composition of the three maps 
$\Theta$, $\up^{-1}$, and $\down$ (as is commonly done)
or in terms of toggles, since these approaches are proven equivalent.
BOR is usually defined in terms of toggles, and we take the analogous approach to defining
BAR below, lifting the definition in 
Proposition~\ref{prop:row-toggles-anti}.

\begin{defn}
For $v\in P$ and $g\in \kk^P$, set
$$\Upsilon_v g=
\sum\limits_{(y_1,\dots,y_k)\in \mc_v(P)} g(y_1) \cdots g(y_k),$$
where we recall $\mc_v(P)$ is the set of all maximal chains of $P$ through $v$.
\end{defn}

\begin{defn}
Let $v\in P$.  The \textbf{birational antichain toggle} is the birational map $\tau_v:\kk^P \dra \kk^P$ defined as follows:

$$\big(\tau_v(g)\big)(x) = \left\{\begin{array}{ll}
\dfrac{C}{\sum\limits_{(y_1,\dots,y_k)\in \mc_v(P)} g(y_1) \cdots g(y_k)} = \dfrac{C}{\Upsilon_v g} &\text{if $x=v$}\\\vspace{-7 pt}\\
g(x) &\text{if $x\not=v$}
\end{array}\right.
$$
\end{defn}

This definition is what is obtained from Definition~\ref{def:PL-ant-tog}
through detropicalization of operations.
As with antichain toggles in the combinatorial and piecewise-linear 
realms~\cite{strikergentog,antichain-toggling}, birational antichain toggles do not commute
as frequently as order toggles. 

\begin{prop}\label{prop:basic-tau-prop}
Let $u,v\in P$.
\begin{enumerate}
\item Each toggle $\tau_v$ is an involution, i.e., $\tau_v^2$ is the identity.
\item If $u\parallel v$ (i.e., $u$ and $v$ are incomparable), then $\tau_u \tau_v = \tau_v \tau_u$.
\end{enumerate}
\end{prop}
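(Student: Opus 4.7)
The plan is to exploit a simple structural observation: every maximal chain through $v$ contains $v$ itself, so the expression $\Upsilon_v g$ factors as $g(v) \cdot \Xi_v(g)$, where
\[
\Xi_v(g) := \sum_{(y_1,\ldots,y_k) \in \mc_v(P),\, y_i = v} g(y_1)\cdots g(y_{i-1})\, g(y_{i+1})\cdots g(y_k)
\]
depends only on the labels at elements of $P \setminus \{v\}$. With this factorization in hand, both parts reduce to one-line verifications.

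For part~(1), since $\tau_v$ modifies only the coordinate at $v$, the ``other-coordinate'' polynomial $\Xi_v$ is unaffected: $\Xi_v(\tau_v g) = \Xi_v(g)$. Substituting the definition of $\tau_v$ twice then yields
\[
(\tau_v^2 g)(v) \;=\; \frac{C}{(\tau_v g)(v)\cdot \Xi_v(\tau_v g)} \;=\; \frac{C}{\bigl(C/(g(v)\,\Xi_v(g))\bigr)\cdot \Xi_v(g)} \;=\; g(v),
\]
and clearly $(\tau_v^2 g)(x) = g(x)$ for $x \neq v$, so $\tau_v^2$ is the identity.

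For part~(2), suppose $u \parallel v$. If any maximal chain of $P$ contained both $u$ and $v$, they would be comparable, a contradiction; hence no element of $\mc_u(P)$ contains $v$, and no element of $\mc_v(P)$ contains $u$. Consequently $\Upsilon_v g$ does not involve $g(u)$, and $\Upsilon_u g$ does not involve $g(v)$. Since $\tau_v$ alters only the label at $v$, it preserves $\Upsilon_u g$, and symmetrically $\tau_u$ preserves $\Upsilon_v g$. Comparing coordinates, both $\tau_u \tau_v(g)$ and $\tau_v \tau_u(g)$ assign the value $C/\Upsilon_u g$ at $u$, the value $C/\Upsilon_v g$ at $v$, and $g(x)$ elsewhere.

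The whole argument is essentially a detropicalization of the corresponding piecewise-linear fact in the first author's earlier paper on antichain toggling, so there is no serious obstacle: the only care needed is in justifying the factorization $\Upsilon_v g = g(v)\,\Xi_v(g)$ and the non-interaction of $\Upsilon_u$ and $\Upsilon_v$ when $u \parallel v$, both of which follow directly from the chain-theoretic definition of $\mc_v(P)$.
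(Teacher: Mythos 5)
Your proof is correct and follows essentially the same strategy as the paper's: factor $g(v)$ out of $\Upsilon_v g$, observe that the remaining factor is unchanged by $\tau_v$, and compute directly. The paper uses a slightly finer three-part factorization (pieces of the chain below $v$, the value at $v$, and pieces above $v$, expressed via the transfer maps $\down^{-1}$ and $\up^{-1}$), whereas your coarser factorization $\Upsilon_v g = g(v)\,\Xi_v(g)$ is all that is needed and is arguably cleaner; part~(2) is handled the same way in both.
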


\begin{proof}
(1)
Let $g\in \kk^P$ be a generic labeling.
To show $\tau_v$ is an involution, we wish to show
$\tau_v^2(g)=g$.
Since $\tau_v$ can only change the label at $v$, we need only show
$\big(\tau_v^2(g)\big)(v)=g(v)$.
Every chain in $\mc_v(P)$ can be split into segments: below $v$, $v$ itself, and above $v$.  As we can take the sums of products on each segment,
\begin{equation}\label{eq:alt_tau_e}
\big(\tau_v(g)\big)(v) =
\frac{C}{\left(\sum\limits_{u\lessdot v}
(\down^{-1}g)(v)\right) g(v) \left(\sum\limits_{u\gtrdot v}
(\up^{-1}g)(v)\right)}
\end{equation}
for any $g\in\kk^P$.
In Eq.~(\ref{eq:alt_tau_e}), we regard
$\sum\limits_{u\lessdot v}
(\down^{-1}g)(v)=1$ if $v$ is minimal in $P$ and likewise $\sum\limits_{u\gtrdot v}
(\up^{-1}g)(v)=1$ if $v$ is maximal in $P$
(since the sums are nonempty when working in $\widehat{P}$).

Using Eq.~(\ref{eq:alt_tau_e}),
\begin{align*}
    \big(\tau_v^2(g)\big)(v) &=
    \frac{C}{\left(\sum\limits_{u\lessdot v}
\big(\down^{-1}\tau_v(g)\big)(v)\right) \big(\tau_v(g)\big)(v) \left(\sum\limits_{u\gtrdot v}
\big(\up^{-1}\tau_v(g)\big)(v)\right)}\\
&=\frac{C}{\left(\sum\limits_{u\lessdot v}
(\down^{-1}g)(v)\right) \frac{C}{\left(\sum\limits_{u\lessdot v}
(\down^{-1}g)(v)\right) g(v) \left(\sum\limits_{u\gtrdot v}
(\up^{-1}g)(v)\right)} \left(\sum\limits_{u\gtrdot v}
(\up^{-1}g)(v)\right)}\\
&=g(v).
\end{align*}

(2) 
Suppose $u \parallel v$.
Only the label of $u$ can be changed by $\tau_u$ and only the label of $v$ can be changed by $\tau_v$.
No chain contains both $u$ and $v$, so the label of $u$ has no effect on what $\tau_v$ does and the label of $v$ has no effect on what $\tau_u$ does.  So $\tau_u\tau_v=\tau_v\tau_u$.

\end{proof}

Proposition~\ref{prop:basic-tau-prop}
shows that the following definition is
well-defined, since any two linear extensions of a
poset differ by a sequence of transpositions of incomparable elements~\cite{etienne-84}.

\begin{defn}\label{def:BAR}
Let $(x_1,x_2,\dots,x_n)$ be any linear extension of a finite poset $P$.  Then the birational
map $\BAR=\tau_{x_n}\cdots \tau_{x_2} \tau_{x_1}$, i.e., toggling once at each element of $P$
from bottom to top, is called \textbf{birational antichain rowmotion (BAR-motion)}.
\end{defn}

\begin{ex}\label{ex:2x3}
Consider the poset $P=[2]\times[3]$ below, with the generic labeling $g\in\kk^P$ by 
$u,v,w,x,y,z\in\kk$. 
\begin{center}
\begin{tikzpicture}[yscale=2/3]
\node at (0,0) {$(1,1)$};
\node at (-1,1) {$(2,1)$};
\node at (1,1) {$(1,2)$};
\node at (0,2) {$(2,2)$};
\node at (2,2) {$(1,3)$};
\node at (1,3) {$(2,3)$};
\draw[thick] (-0.35,0.35) -- (-0.65,0.65);
\draw[thick] (0.35,0.35) -- (0.65,0.65);
\draw[thick] (1.35,1.35) -- (1.65,1.65);
\draw[thick] (0.35,2.35) -- (0.65,2.65);
\draw[thick] (-0.35,1.65) -- (-0.65,1.35);
\draw[thick] (0.35,1.65) -- (0.65,1.35);
\draw[thick] (1.35,2.65) -- (1.65,2.35);
\end{tikzpicture}
\hskip 1in
\begin{tikzpicture}[yscale=2/3]
\node at (0,0) {$u$};
\node at (-1,1) {$v$};
\node at (1,1) {$w$};
\node at (0,2) {$x$};
\node at (2,2) {$y$};
\node at (1,3) {$z$};
\draw[thick] (-0.35,0.35) -- (-0.65,0.65);
\draw[thick] (0.35,0.35) -- (0.65,0.65);
\draw[thick] (1.35,1.35) -- (1.65,1.65);
\draw[thick] (0.35,2.35) -- (0.65,2.65);
\draw[thick] (-0.35,1.65) -- (-0.65,1.35);
\draw[thick] (0.35,1.65) -- (0.65,1.35);
\draw[thick] (1.35,2.65) -- (1.65,2.35);
\end{tikzpicture}
\end{center}



To compute $\BAR$ along the linear extension $(1,1),(2,1),(1,2),(2,2),(1,3),(2,3)$, 
we first toggle at $(1,1)$.  There are three maximal chains through this bottom element:

\begin{itemize}
\item $(1,1)\lessdot(2,1)\lessdot(2,2)\lessdot(2,3)$ with product of labels $uvxz$
\item $(1,1)\lessdot(1,2)\lessdot(2,2)\lessdot(2,3)$ with product of labels $uwxz$
\item $(1,1)\lessdot(1,2)\lessdot(1,3)\lessdot(2,3)$ with product of labels $uwyz$
\end{itemize}

For $\Upsilon_{(1,1)}g$, we add up the products of the labels on these three maximal chains, and get $uvxz+uwxz+uwyz=u(vx+wx+wy)z$.
Then to apply the toggle $\tau_{(1,1)}$,
we change the label of $(1,1)$ from $u$ to
$\frac{C}{u(vx+wx+wy)z}$.

\begin{center}
\begin{tikzpicture}[xscale=2,yscale=8/9]
\node at (0,0) {$u$};
\node at (-1,1) {$v$};
\node at (1,1) {$w$};
\node at (0,2) {$x$};
\node at (2,2) {$y$};
\node at (1,3) {$z$};
\draw[thick] (-0.35,0.35) -- (-0.65,0.65);
\draw[thick] (0.35,0.35) -- (0.65,0.65);
\draw[thick] (1.35,1.35) -- (1.65,1.65);
\draw[thick] (0.35,2.35) -- (0.65,2.65);
\draw[thick] (-0.35,1.65) -- (-0.65,1.35);
\draw[thick] (0.35,1.65) -- (0.65,1.35);
\draw[thick] (1.35,2.65) -- (1.65,2.35);
\node at (3,1.6) {{\LARGE$\stackrel{\tau_{(1,1)}}{\longmapsto}$}};
\begin{scope}[shift={(5,0)}]
\node at (0,0) {$\frac{C}{u(vx+wx+wy)z}$};
\node at (-1,1) {$v$};
\node at (1,1) {$w$};
\node at (0,2) {$x$};
\node at (2,2) {$y$};
\node at (1,3) {$z$};
\draw[thick] (-0.35,0.35) -- (-0.65,0.65);
\draw[thick] (0.35,0.35) -- (0.65,0.65);
\draw[thick] (1.35,1.35) -- (1.65,1.65);
\draw[thick] (0.35,2.35) -- (0.65,2.65);
\draw[thick] (-0.35,1.65) -- (-0.65,1.35);
\draw[thick] (0.35,1.65) -- (0.65,1.35);
\draw[thick] (1.35,2.65) -- (1.65,2.35);
\end{scope}
\end{tikzpicture}
\end{center}

Now we apply $\tau_{(2,1)}$ to $\tau_{(1,1)}g$ (our above result).  There is only one maximal chain through $(2,1)$.  The product of labels along that maximal chain is $\Upsilon_{(2,1)}(\tau_{(1,1)}g)=\frac{C}{u(vx+wx+wy)z}vxz$.  Thus we change the label of $(2,1)$
from $v$ to $\frac{C}{\frac{C}{u(vx+wx+wy)z}vxz}=\frac{u(vx+wx+wy)}{vx}$.

\begin{center}
\begin{tikzpicture}[xscale=2,yscale=8/9]
\node at (0,0) {$\frac{C}{u(vx+wx+wy)z}$};
\node at (-1,1) {$v$};
\node at (1,1) {$w$};
\node at (0,2) {$x$};
\node at (2,2) {$y$};
\node at (1,3) {$z$};
\draw[thick] (-0.35,0.35) -- (-0.65,0.65);
\draw[thick] (0.35,0.35) -- (0.65,0.65);
\draw[thick] (1.35,1.35) -- (1.65,1.65);
\draw[thick] (0.35,2.35) -- (0.65,2.65);
\draw[thick] (-0.35,1.65) -- (-0.65,1.35);
\draw[thick] (0.35,1.65) -- (0.65,1.35);
\draw[thick] (1.35,2.65) -- (1.65,2.35);
\node at (3,1.6) {{\LARGE$\stackrel{\tau_{(2,1)}}{\longmapsto}$}};
\begin{scope}[shift={(5,0)}]
\node at (0,0) {$\frac{C}{u(vx+wx+wy)z}$};
\node at (-1,1) {$\frac{u(vx+wx+wy)}{vx}$};
\node at (1,1) {$w$};
\node at (0,2) {$x$};
\node at (2,2) {$y$};
\node at (1,3) {$z$};
\draw[thick] (-0.35,0.35) -- (-0.65,0.65);
\draw[thick] (0.35,0.35) -- (0.65,0.65);
\draw[thick] (1.35,1.35) -- (1.65,1.65);
\draw[thick] (0.35,2.35) -- (0.65,2.65);
\draw[thick] (-0.35,1.65) -- (-0.65,1.35);
\draw[thick] (0.35,1.65) -- (0.65,1.35);
\draw[thick] (1.35,2.65) -- (1.65,2.35);
\end{scope}
\end{tikzpicture}
\end{center}

Next there are two maximal chains through $(1,2)$, one of which goes through the element
labeled $x$ and the other through the element labeled $y$.  Dividing $C$ by the sum of the
products of the labels for both chains gives $$\frac{C}{\frac{C}{u(vx+wx+wy)z} w (x+y) z} =
\frac{u(vx+wx+wy)}{w(x+y)}$$ 

\begin{center}
\begin{tikzpicture}[xscale=2,yscale=8/9]
\node at (0,0) {$\frac{C}{u(vx+wx+wy)z}$};
\node at (-0.75,1) {$\frac{u(vx+wx+wy)}{vx}$};
\node at (1,1) {$w$};
\node at (0,2) {$x$};
\node at (2,2) {$y$};
\node at (1,3) {$z$};
\draw[thick] (-0.35,0.35) -- (-0.65,0.65);
\draw[thick] (0.35,0.35) -- (0.65,0.65);
\draw[thick] (1.35,1.35) -- (1.65,1.65);
\draw[thick] (0.35,2.35) -- (0.65,2.65);
\draw[thick] (-0.35,1.65) -- (-0.65,1.35);
\draw[thick] (0.35,1.65) -- (0.65,1.35);
\draw[thick] (1.35,2.65) -- (1.65,2.35);
\node at (2.7,1.6) {{\LARGE$\stackrel{\tau_{(1,2)}}{\longmapsto}$}};
\begin{scope}[shift={(14/3,0)}]
\node at (0,0) {$\frac{C}{u(vx+wx+wy)z}$};
\node at (-0.75,1) {$\frac{u(vx+wx+wy)}{vx}$};
\node at (1,1) {$\frac{u(vx+wx+wy)}{w(x+y)}$};
\node at (0,2) {$x$};
\node at (2,2) {$y$};
\node at (1,3) {$z$};
\draw[thick] (-0.35,0.35) -- (-0.65,0.65);
\draw[thick] (0.35,0.35) -- (0.65,0.65);
\draw[thick] (1.35,1.35) -- (1.65,1.65);
\draw[thick] (0.35,2.35) -- (0.65,2.65);
\draw[thick] (-0.35,1.65) -- (-0.65,1.35);
\draw[thick] (0.35,1.65) -- (0.65,1.35);
\draw[thick] (1.35,2.65) -- (1.65,2.35);
\end{scope}
\end{tikzpicture}
\end{center}

Similarly, the remaining 3 toggles give

\begin{center}
\begin{tikzpicture}[xscale=2,yscale=8/9]
\node at (0,0) {$\frac{C}{u(vx+wx+wy)z}$};
\node at (-0.75,1) {$\frac{u(vx+wx+wy)}{vx}$};
\node at (1,1) {$\frac{u(vx+wx+wy)}{w(x+y)}$};
\node at (0,2) {$x$};
\node at (2,2) {$y$};
\node at (1,3) {$z$};
\draw[thick] (-0.35,0.35) -- (-0.65,0.65);
\draw[thick] (0.35,0.35) -- (0.65,0.65);
\draw[thick] (1.35,1.35) -- (1.65,1.65);
\draw[thick] (0.35,2.35) -- (0.65,2.65);
\draw[thick] (-0.35,1.65) -- (-0.65,1.35);
\draw[thick] (0.35,1.65) -- (0.65,1.35);
\draw[thick] (1.35,2.65) -- (1.65,2.35);
\node at (2.7,1.6) {{\LARGE$\stackrel{\tau_{(2,2)}}{\longmapsto}$}};
\begin{scope}[shift={(14/3,0)}]
\node at (0,0) {$\frac{C}{u(vx+wx+wy)z}$};
\node at (-0.75,1) {$\frac{u(vx+wx+wy)}{vx}$};
\node at (1,1) {$\frac{u(vx+wx+wy)}{w(x+y)}$};
\node at (0,2) {$\frac{vw(x+y)}{vx+wx+wy}$};
\node at (2,2) {$y$};
\node at (1,3) {$z$};
\draw[thick] (-0.35,0.35) -- (-0.65,0.65);
\draw[thick] (0.35,0.35) -- (0.65,0.65);
\draw[thick] (1.35,1.35) -- (1.65,1.65);
\draw[thick] (0.35,2.35) -- (0.65,2.65);
\draw[thick] (-0.35,1.65) -- (-0.65,1.35);
\draw[thick] (0.35,1.65) -- (0.65,1.35);
\draw[thick] (1.35,2.65) -- (1.65,2.35);
\end{scope}
\end{tikzpicture}
\end{center}

\begin{center}
\begin{tikzpicture}[xscale=2,yscale=8/9]
\node at (0,0) {$\frac{C}{u(vx+wx+wy)z}$};
\node at (-0.75,1) {$\frac{u(vx+wx+wy)}{vx}$};
\node at (1,1) {$\frac{u(vx+wx+wy)}{w(x+y)}$};
\node at (0,2) {$\frac{vw(x+y)}{vx+wx+wy}$};
\node at (2,2) {$y$};
\node at (1,3) {$z$};
\draw[thick] (-0.35,0.35) -- (-0.65,0.65);
\draw[thick] (0.35,0.35) -- (0.65,0.65);
\draw[thick] (1.35,1.35) -- (1.65,1.65);
\draw[thick] (0.35,2.35) -- (0.65,2.65);
\draw[thick] (-0.35,1.65) -- (-0.65,1.35);
\draw[thick] (0.35,1.65) -- (0.65,1.35);
\draw[thick] (1.35,2.65) -- (1.65,2.35);
\node at (2.7,1.6) {{\LARGE$\stackrel{\tau_{(1,3)}}{\longmapsto}$}};
\begin{scope}[shift={(14/3,0)}]
\node at (0,0) {$\frac{C}{u(vx+wx+wy)z}$};
\node at (-0.75,1) {$\frac{u(vx+wx+wy)}{vx}$};
\node at (1,1) {$\frac{u(vx+wx+wy)}{w(x+y)}$};
\node at (0,2) {$\frac{vw(x+y)}{vx+wx+wy}$};
\node at (1.89,2) {$\frac{w(x+y)}{y}$};
\node at (1,3) {$z$};
\draw[thick] (-0.35,0.35) -- (-0.65,0.65);
\draw[thick] (0.35,0.35) -- (0.65,0.65);
\draw[thick] (1.35,1.35) -- (1.65,1.65);
\draw[thick] (0.35,2.35) -- (0.65,2.65);
\draw[thick] (-0.35,1.65) -- (-0.65,1.35);
\draw[thick] (0.35,1.65) -- (0.65,1.35);
\draw[thick] (1.35,2.65) -- (1.65,2.35);
\end{scope}
\end{tikzpicture}
\end{center}

\begin{center}
\begin{tikzpicture}[xscale=2,yscale=8/9]
\node at (0,0) {$\frac{C}{u(vx+wx+wy)z}$};
\node at (-0.75,1) {$\frac{u(vx+wx+wy)}{vx}$};
\node at (1,1) {$\frac{u(vx+wx+wy)}{w(x+y)}$};
\node at (0,2) {$\frac{vw(x+y)}{vx+wx+wy}$};
\node at (1.89,2) {$\frac{w(x+y)}{y}$};
\node at (1,3) {$z$};
\draw[thick] (-0.35,0.35) -- (-0.65,0.65);
\draw[thick] (0.35,0.35) -- (0.65,0.65);
\draw[thick] (1.35,1.35) -- (1.65,1.65);
\draw[thick] (0.35,2.35) -- (0.65,2.65);
\draw[thick] (-0.35,1.65) -- (-0.65,1.35);
\draw[thick] (0.35,1.65) -- (0.65,1.35);
\draw[thick] (1.35,2.65) -- (1.65,2.35);
\node at (2.7,1.6) {{\LARGE$\stackrel{\tau_{(2,3)}}{\longmapsto}$}};
\begin{scope}[shift={(14/3,0)}]
\node at (0,0) {$\frac{C}{u(vx+wx+wy)z}$};
\node at (-0.75,1) {$\frac{u(vx+wx+wy)}{vx}$};
\node at (1,1) {$\frac{u(vx+wx+wy)}{w(x+y)}$};
\node at (0,2) {$\frac{vw(x+y)}{vx+wx+wy}$};
\node at (1.89,2) {$\frac{w(x+y)}{y}$};
\node at (1,3) {$\frac{xy}{x+y}$};
\draw[thick] (-0.35,0.35) -- (-0.65,0.65);
\draw[thick] (0.35,0.35) -- (0.65,0.65);
\draw[thick] (1.35,1.35) -- (1.65,1.65);
\draw[thick] (0.35,2.35) -- (0.65,2.65);
\draw[thick] (-0.35,1.65) -- (-0.65,1.35);
\draw[thick] (0.35,1.65) -- (0.65,1.35);
\draw[thick] (1.35,2.65) -- (1.65,2.35);
\end{scope}
\end{tikzpicture}
\end{center}

Note this composition of the six toggles is \textbf{one} iteration of BAR-motion on $g$.
\end{ex}

\begin{thm}\label{thm:chi-R-commute-diagram}
On a finite poset $P$, $\BAR= \down\circ\Theta\circ\up^{-1}$, and the diagram
below commutes.

\begin{center}
\begin{tikzpicture}[yscale=2/3]
\node at (0,1.8) {$\kk^P$};
\node at (0,0) {$\kk^P$};
\node at (3.25,1.8) {$\kk^P$};
\node at (3.25,0) {$\kk^P$};
\draw[semithick, dashed, ->] (0,1.3) -- (0,0.5);
\node[left] at (0,0.9) {$\down$};
\draw[semithick, dashed, ->] (0.7,0) -- (2.5,0);
\node[below] at (1.5,0) {$\BAR$};
\draw[semithick, dashed, ->] (0.7,1.8) -- (2.5,1.8);
\node[above] at (1.5,1.8) {$\BOR$};
\draw[semithick, dashed, ->] (3.25,1.3) -- (3.25,0.5);
\node[right] at (3.25,0.9) {$\down$};
\end{tikzpicture}
\end{center}
\end{thm}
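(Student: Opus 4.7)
The plan is to establish the pointwise identity $\BAR = \down \circ \Theta \circ \up^{-1}$ first; the commutativity of the diagram then follows immediately. Indeed, combining the target identity with the Einstein--Propp formula $\BOR = \Theta \circ \up^{-1} \circ \down$ yields
\[
\down \circ \BOR \;=\; \down \circ \Theta \circ \up^{-1} \circ \down \;=\; \BAR \circ \down.
\]

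For the main identity, I would fix a linear extension $(x_1, \ldots, x_n)$ of $P$, let $g \in \kk^P$ be generic, set $h = \down \circ \Theta \circ \up^{-1}(g)$, and argue by induction on $j$ that $(\BAR(g))(x_j) = h(x_j)$. The crucial observation is that $\tau_v$ is the unique factor in the product $\tau_{x_n}\cdots\tau_{x_1}$ that alters the label at $v$, so $(\BAR(g))(v)$ equals the label at $v$ immediately after $\tau_v$ is applied. Because $(x_1,\ldots,x_n)$ is a linear extension, at that moment every element strictly below $v$ has already been toggled (and, by the inductive hypothesis, carries the label $h(\cdot)$), while every element strictly above $v$ still carries its original $g$-label; toggles at elements incomparable to $v$ commute with $\tau_v$ by Proposition~\ref{prop:basic-tau-prop}(2) and never appear on any maximal chain through $v$, so they contribute nothing to the computation of $\Upsilon_v$.

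With this bookkeeping in place, I would apply the factorization
\[
\Upsilon_v f \;=\; f(v)\Bigl(\sum_{u\lessdot v}(\down^{-1}f)(u)\Bigr)\Bigl(\sum_{u\gtrdot v}(\up^{-1}f)(u)\Bigr),
\]
which is precisely the denominator decomposition underlying \eqref{eq:alt_tau_e}, to the intermediate labeling $f = \tau_{x_{j-1}}\cdots\tau_{x_1}(g)$. The at-$v$ and above-$v$ contributions coincide with the corresponding quantities for $g$, while the below-$v$ contribution is computed using $h$, so
\[
(\BAR g)(v) \;=\; \frac{C}{g(v)\bigl(\sum_{u\lessdot v}(\down^{-1}h)(u)\bigr)\bigl(\sum_{u\gtrdot v}(\up^{-1}g)(u)\bigr)}.
\]
The remaining simplification is algebraic: since $h = \down(\Theta \circ \up^{-1}(g))$, applying $\down^{-1}$ gives $\down^{-1}h = \Theta \circ \up^{-1}(g)$, so $(\down^{-1}h)(u) = C/(\up^{-1}g)(u)$. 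Combined with the recursion $g(v)\sum_{u\gtrdot v}(\up^{-1}g)(u) = (\up^{-1}g)(v)$ from Definition~\ref{def:bir-trans}, the right-hand side collapses to $\bigl(C/(\up^{-1}g)(v)\bigr)\big/\sum_{u\lessdot v}\bigl(C/(\up^{-1}g)(u)\bigr)$, which is exactly $h(v)$ after unwinding $\down \circ \Theta \circ \up^{-1}$. The main technical hurdle is the bookkeeping---verifying that at the instant $\tau_v$ fires the labels below $v$ are $h$-values while the labels above $v$ are $g$-values---but this is handled cleanly by the linear-extension property together with the commutation relations of Proposition~\ref{prop:basic-tau-prop}.
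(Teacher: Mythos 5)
Your proof is correct, but it takes a genuinely different route from the paper's. The paper deliberately defers this theorem, deriving it as a specialization of the noncommutative version (Theorem~\ref{thm:NAR-transfer}); that proof in turn constructs the explicit isomorphism $\tau_v \mapsto \tau_v^*$ between the antichain and order toggle groups via conjugation by $\eta_v$ (Theorems~\ref{thm:T-star-NC} and~\ref{thm:tau-star-NC}, plus Lemma~\ref{lem:tau-star-NC}) and then reduces to the separately proved identity $\NOR=\Theta\circ\up^{-1}\circ\down$ (Theorem~\ref{thm:NOR-transfer}, a top-to-bottom induction). What you propose is a self-contained bottom-to-top induction along a linear extension: track which labels are already final when $\tau_v$ fires (everything strictly below $v$ has been toggled exactly once and carries its final, $h$-labeled value; everything at or above $v$ is still $g$-labeled; incomparable elements never enter $\mc_v(P)$), then apply the chain-splitting factorization of $\Upsilon_v$ from Eq.~\eqref{eq:alt_tau_e} and the transfer-map recursions. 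Your bookkeeping is right, and the algebra closes as you say. This is considerably shorter and more transparent than the paper's route for the commutative case. The trade-off, and the reason the authors go the long way around, is that the final simplification---passing $g(v)$ across $\sum_{u\lessdot v}(\down^{-1}h)(u)$ so it combines with $\sum_{u\gtrdot v}(\up^{-1}g)(u)$ to give $(\up^{-1}g)(v)$---relies on commutativity of $\kk$. In the skew-field setting the chain-splitting factorization itself changes (compare Lemma~\ref{lem:meteor gorge}) and that rearrangement is no longer available, so the direct-induction argument does not lift to establish $\NAR=\down\circ\Theta\circ\up^{-1}$, whereas the toggle-group isomorphism does.
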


Compare the example in Figure~\ref{fig:2x3-BAR}
to Example~\ref{ex:2x3}.
We will not prove Theorem~\ref{thm:chi-R-commute-diagram} now
as it follows immediately from the more general
noncommutative version: Theorem~\ref{thm:NAR-transfer}.

\begin{figure}[b]
\begin{center}
\begin{tikzpicture}[xscale=1.87,yscale=1.25]
\begin{scope}[yscale=4/7]
\node at (0,0) {$u$};
\node at (-1,1) {$v$};
\node at (1,1) {$w$};
\node at (0,2) {$x$};
\node at (2,2) {$y$};
\node at (1,3) {$z$};
\draw[thick] (-0.2,0.2) -- (-0.8,0.8);
\draw[thick] (0.2,0.2) -- (0.8,0.8);
\draw[thick] (1.2,1.2) -- (1.8,1.8);
\draw[thick] (0.2,2.2) -- (0.8,2.8);
\draw[thick] (-0.2,1.8) -- (-0.8,1.2);
\draw[thick] (0.2,1.8) -- (0.8,1.2);
\draw[thick] (1.2,2.8) -- (1.8,2.2);
\end{scope}
\draw[ultra thick, ->] (0,-0.5) -- (0,-1.1);
\node[left] at (0,-0.8) {$\BAR$};
\draw[ultra thick, ->] (2.5,0.5) -- (3.5,0.5);
\node[above] at (3,0.5) {$\up^{-1}$};
\node[above] at (3,-2.3) {$\down$};
\draw[ultra thick, ->] (3.5,-2.3) -- (2.5,-2.3);
\draw[ultra thick, ->] (5,-0.5) -- (5,-1.1);
\node[left] at (5,-0.8) {$\Theta$};
\begin{scope}[yscale=4/7,shift={(5,0)},xscale=5/6]
\node at (0,0) {$u(vx+wx+wy)z$};
\node at (-1,1) {$vxz$};
\node at (1,1) {$w(x+y)z$};
\node at (0,2) {$xz$};
\node at (2,2) {$yz$};
\node at (1,3) {$z$};
\draw[thick] (-0.3,0.3) -- (-0.7,0.7);
\draw[thick] (0.3,0.3) -- (0.7,0.7);
\draw[thick] (1.3,1.3) -- (1.7,1.7);
\draw[thick] (0.3,2.3) -- (0.7,2.7);
\draw[thick] (-0.3,1.7) -- (-0.7,1.3);
\draw[thick] (0.3,1.7) -- (0.7,1.3);
\draw[thick] (1.3,2.7) -- (1.7,2.3);
\end{scope}
\begin{scope}[shift={(0,-3.3)},yscale=13/18]
\node at (0,0) {$\frac{C}{u(vx+wx+wy)z}$};
\node at (-1,1) {$\frac{u(vx+wx+wy)}{vx}$};
\node at (1,1) {$\frac{u(vx+wx+wy)}{w(x+y)}$};
\node at (0,2) {$\frac{vw(x+y)}{vx+wx+wy}$};
\node at (2,2) {$\frac{w(x+y)}{y}$};
\node at (1,3) {$\frac{xy}{x+y}$};
\draw[thick] (-0.3,0.3) -- (-0.7,0.7);
\draw[thick] (0.3,0.3) -- (0.7,0.7);
\draw[thick] (1.3,1.3) -- (1.7,1.7);
\draw[thick] (0.3,2.3) -- (0.7,2.7);
\draw[thick] (-0.3,1.7) -- (-0.7,1.3);
\draw[thick] (0.3,1.7) -- (0.7,1.3);
\draw[thick] (1.3,2.7) -- (1.7,2.3);
\end{scope}
\begin{scope}[shift={(5,-3.3)},xscale=5/6,yscale=13/18]
\node at (0,0) {$\frac{C}{u(vx+wx+wy)z}$};
\node at (-1,1) {$\frac{C}{vxz}$};
\node at (1,1) {$\frac{C}{w(x+y)z}$};
\node at (0,2) {$\frac{C}{xz}$};
\node at (2,2) {$\frac{C}{yz}$};
\node at (1,3) {$\frac{C}{z}$};
\draw[thick] (-0.3,0.3) -- (-0.7,0.7);
\draw[thick] (0.3,0.3) -- (0.7,0.7);
\draw[thick] (1.3,1.3) -- (1.7,1.7);
\draw[thick] (0.3,2.3) -- (0.7,2.7);
\draw[thick] (-0.3,1.7) -- (-0.7,1.3);
\draw[thick] (0.3,1.7) -- (0.7,1.3);
\draw[thick] (1.3,2.7) -- (1.7,2.3);
\end{scope}
\end{tikzpicture}
\end{center}
\caption{One iteration of $\BAR$-motion on $[2]\times[3]$ as the composition of the three maps.}
\label{fig:2x3-BAR}
\end{figure}
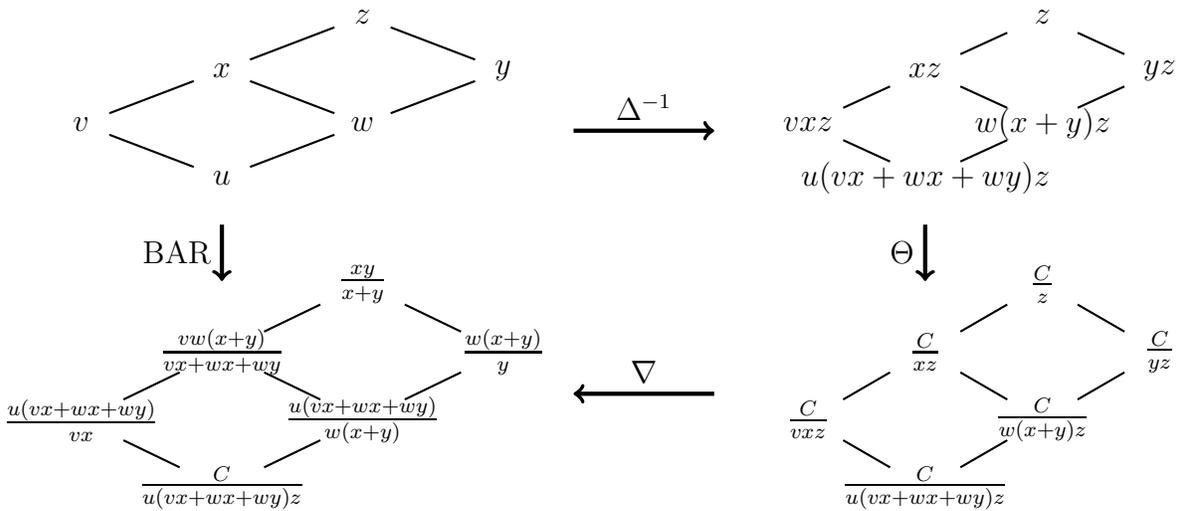

\begin{ex}\label{ex:BAR[2]x[3]}
Since $\BOR=\down^{-1} \circ \BAR \circ \down$,
$\BAR$ has the same order as $\BOR$ on any given
finite poset $P$.
On a general poset $P$, these birational rowmotion maps usually have infinite order, but on several nice families of posets described in~\cite{GrRo16,GrRo15,hopkins2019minuscule}, the order is finite (and the same as the order in the combinatorial realm).
For example, on a rectangle $[a]\times[b]$,
birational rowmotion has order $a+b$.
Figure~\ref{fig:BAR[2]x[3]} illustrates this with a generic orbit of $\BAR$ on $P=[2]\times[3]$, observing that the order is 5.
\end{ex}

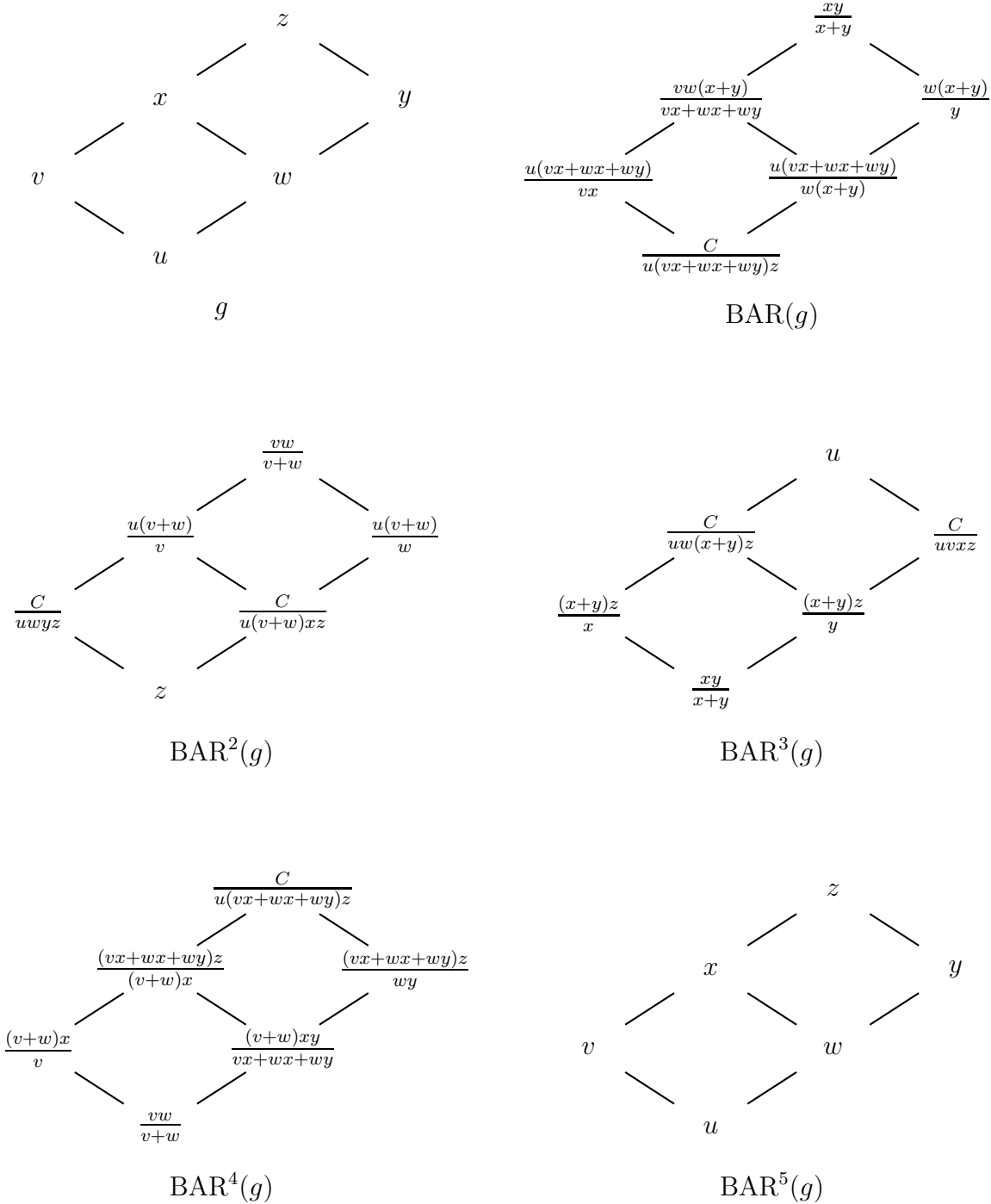
\begin{figure}
\centering
\begin{tikzpicture}[xscale=17/9, yscale=11/9]
\begin{scope}
\draw[thick] (-0.3, 1.7) -- (-0.7, 1.3);
\draw[thick] (0.3, 1.7) -- (0.7, 1.3);
\draw[thick] (-0.7, 0.7) -- (-0.3, 0.3);
\draw[thick] (0.7, 0.7) -- (0.3, 0.3);
\draw[thick] (0.7, 2.7) -- (0.3, 2.3);
\draw[thick] (1.3, 2.7) -- (1.7, 2.3);
\draw[thick] (1.7, 1.7) -- (1.3, 1.3);
\node at (1,3) {$  z  $};
\node at (0,2) {$  x  $};
\node at (2,2) {$  y  $};
\node at (-1,1) {$  v  $};
\node at (1,1) {$  w  $};
\node at (0,0) {$  u  $};
\node[below] at (0.5,-0.45) {$g$};
\end{scope}
\begin{scope}[shift={(4.5,0)}]
\draw[thick] (-0.3, 1.7) -- (-0.7, 1.3);
\draw[thick] (0.3, 1.7) -- (0.7, 1.3);
\draw[thick] (-0.7, 0.7) -- (-0.3, 0.3);
\draw[thick] (0.7, 0.7) -- (0.3, 0.3);
\draw[thick] (0.7, 2.7) -- (0.3, 2.3);
\draw[thick] (1.3, 2.7) -- (1.7, 2.3);
\draw[thick] (1.7, 1.7) -- (1.3, 1.3);
\node at (1,3) {$  \frac{xy}{x+y}  $};
\node at (0,2) {$  \frac{vw(x+y)}{vx+wx+wy}  $};
\node at (2,2) {$  \frac{w(x+y)}{y}  $};
\node at (-1,1) {$  \frac{u(vx+wx+wy)}{vx}  $};
\node at (1,1) {$  \frac{u(vx+wx+wy)}{w(x+y)}  $};
\node at (0,0) {$  \frac{C}{u(vx+wx+wy)z}  $};
\node[below] at (0.5,-0.45) {$\BAR(g)$};
\end{scope}
\begin{scope}[shift={(0,-5.5)}]
\draw[thick] (-0.3, 1.7) -- (-0.7, 1.3);
\draw[thick] (0.3, 1.7) -- (0.7, 1.3);
\draw[thick] (-0.7, 0.7) -- (-0.3, 0.3);
\draw[thick] (0.7, 0.7) -- (0.3, 0.3);
\draw[thick] (0.7, 2.7) -- (0.3, 2.3);
\draw[thick] (1.3, 2.7) -- (1.7, 2.3);
\draw[thick] (1.7, 1.7) -- (1.3, 1.3);
\node at (1,3) {$  \frac{vw}{v+w}  $};
\node at (0,2) {$  \frac{u(v+w)}{v}  $};
\node at (2,2) {$  \frac{u(v+w)}{w}  $};
\node at (-1,1) {$  \frac{C}{uwyz}  $};
\node at (1,1) {$  \frac{C}{u(v+w)xz}  $};
\node at (0,0) {$  z  $};
\node[below] at (0.5,-0.45) {$\BAR^2(g)$};
\end{scope}
\begin{scope}[shift={(4.5,-5.5)}]
\draw[thick] (-0.3, 1.7) -- (-0.7, 1.3);
\draw[thick] (0.3, 1.7) -- (0.7, 1.3);
\draw[thick] (-0.7, 0.7) -- (-0.3, 0.3);
\draw[thick] (0.7, 0.7) -- (0.3, 0.3);
\draw[thick] (0.7, 2.7) -- (0.3, 2.3);
\draw[thick] (1.3, 2.7) -- (1.7, 2.3);
\draw[thick] (1.7, 1.7) -- (1.3, 1.3);
\node at (1,3) {$  u  $};
\node at (0,2) {$  \frac{C}{uw(x+y)z}  $};
\node at (2,2) {$  \frac{C}{uvxz}  $};
\node at (-1,1) {$  \frac{(x+y)z}{x}  $};
\node at (1,1) {$  \frac{(x+y)z}{y}  $};
\node at (0,0) {$  \frac{xy}{x+y}  $};
\node[below] at (0.5,-0.45) {$\BAR^3(g)$};
\end{scope}
\begin{scope}[shift={(0,-11)}]
\draw[thick] (-0.3, 1.7) -- (-0.7, 1.3);
\draw[thick] (0.3, 1.7) -- (0.7, 1.3);
\draw[thick] (-0.7, 0.7) -- (-0.3, 0.3);
\draw[thick] (0.7, 0.7) -- (0.3, 0.3);
\draw[thick] (0.7, 2.7) -- (0.3, 2.3);
\draw[thick] (1.3, 2.7) -- (1.7, 2.3);
\draw[thick] (1.7, 1.7) -- (1.3, 1.3);
\node at (1,3) {$  \frac{C}{u(vx+wx+wy)z}  $};
\node at (0,2) {$  \frac{(vx+wx+wy)z}{(v+w)x}  $};
\node at (2,2) {$  \frac{(vx+wx+wy)z}{wy}  $};
\node at (-1,1) {$  \frac{(v+w)x}{v}  $};
\node at (1,1) {$  \frac{(v+w)xy}{vx+wx+wy}  $};
\node at (0,0) {$  \frac{vw}{v+w}  $};
\node[below] at (0.5,-0.45) {$\BAR^4(g)$};
\end{scope}
\begin{scope}[shift={(4.5,-11)}]
\draw[thick] (-0.3, 1.7) -- (-0.7, 1.3);
\draw[thick] (0.3, 1.7) -- (0.7, 1.3);
\draw[thick] (-0.7, 0.7) -- (-0.3, 0.3);
\draw[thick] (0.7, 0.7) -- (0.3, 0.3);
\draw[thick] (0.7, 2.7) -- (0.3, 2.3);
\draw[thick] (1.3, 2.7) -- (1.7, 2.3);
\draw[thick] (1.7, 1.7) -- (1.3, 1.3);
\node at (1,3) {$  z  $};
\node at (0,2) {$  x  $};
\node at (2,2) {$  y  $};
\node at (-1,1) {$  v  $};
\node at (1,1) {$  w  $};
\node at (0,0) {$  u  $};
\node[below] at (0.5,-0.45) {$\BAR^5(g)$};
\end{scope}
\end{tikzpicture}
\caption{An orbit of $\BAR$ starting with a generic labeling $g\in\kk^P$, for
$P=[2]\times[3]$.  We observe that $\BAR^5(g)=g$, so the order of $\BAR$ is $5=2+3$ on this poset.}
\label{fig:BAR[2]x[3]}
\end{figure}

\subsection{Isomorphism between the two birational
toggle groups}\label{ss:isoBR}

The first author constructed an explicit isomorphism between
the combinatorial toggle groups of order ideals and of antichains, and then lifted it to the
piecewise-linear realm~\cite{antichain-toggling}.  Here we further lift it to the birational realm.  It turns out that
this isomorphism can be lifted even further to the noncommutative realm, with modified
definitions because toggles are no longer involutions there. 
We do this in Section~\ref{sec:noncommutative}.  All the proofs over skew fields
imply the commutative birational counterparts by restriction. So in this subsection we merely
state these (new) results.  

\begin{defn}
For $v\in P$, let $T_v^* =\tau_{v_1}\tau_{v_2}\cdots \tau_{v_k} \tau_v \tau_{v_k}\cdots
\tau_{v_2}\tau_{v_1}$ where $v_1,\dots,v_k$ are the elements of $P$ covered by $v$.  (In the
case that $v$ is a minimal element of $P$, $k=0$ and $T_v^*=\tau_v$.) So $T_v^*$ is the
conjugation of $\tau_{v}$ by $\prod_{w\lessdot v}\tau_{w}$, which within $\BTog_{A}(P)$ mimics the effect of
the order toggle $T_{v}$.  

\end{defn}

\begin{absolutelynopagebreak}
\begin{thm}[{Analogue of~\cite[Thm.~2.15]{antichain-toggling}, generalized in Thm.~\ref{thm:T-star-NC}}]\label{thm:T-star}
Let $v\in P$.  Then the following diagram commutes on the domains in which the maps are defined.
\begin{center}
\begin{tikzpicture}[yscale=.6]
\node at (0,1.8) {$\kk^P$};
\node at (0,0) {$\kk^P$};
\node at (0,-1.8) {$\kk^P$};
\node at (3.25,1.8) {$\kk^P$};
\node at (3.25,0) {$\kk^P$};
\node at (3.25,-1.8) {$\kk^P$};
\draw[semithick, dashed, ->] (0,1.3) -- (0,0.5);
\node[left] at (0,0.9) {$\up^{-1}$};
\draw[semithick, dashed, ->] (0.7,-1.8) -- (2.5,-1.8);
\node[below] at (1.5,-1.8) {$T_v$};
\draw[semithick, dashed, ->] (0.7,1.8) -- (2.5,1.8);
\node[above] at (1.5,1.8) {$T_v^*$};
\draw[semithick, dashed, ->] (3.25,1.3) -- (3.25,0.5);
\node[right] at (3.25,0.9) {$\up^{-1}$};
\draw[semithick, dashed, ->] (0,-0.5) -- (0,-1.3);
\node[left] at (0,-0.9) {$\Theta$};
\draw[semithick, dashed, ->] (3.25,-0.5) -- (3.25,-1.3);
\node[right] at (3.25,-0.9) {$\Theta$};
\end{tikzpicture}
\end{center}
\end{thm}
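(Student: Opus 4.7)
My plan is to verify the commutativity of the diagram directly, by comparing $\Theta \circ \up^{-1} \circ T_v^*$ with $T_v \circ \Theta \circ \up^{-1}$ pointwise. Writing $f = \Theta(\up^{-1} g)$, so that $f(x) = C/(\up^{-1}g)(x)$ for $x \in P$, and noting that $T_v$ modifies only the label at $v$, the identity to establish reduces to two claims:
\begin{enumerate}
\item $(\up^{-1} T_v^* g)(x) = (\up^{-1} g)(x)$ for every $x \in P$ with $x \neq v$;
\item $(\up^{-1} T_v^* g)(v) = 1\big/\bigl[g(v)\sum_i 1/(\up^{-1}g)(v_i)\bigr]$,
\end{enumerate}
where (2) is the reciprocal, up to the factor $C$, of $T_v(f)(v)$ computed directly from the toggle formula. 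Boundary cases where $v$ is minimal or maximal in $P$ are handled by the standard $\widehat{P}$ convention $g(\widehat{0})=1$, $g(\widehat{1})=C$.

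The key computational input is an explicit description of $T_v^*(g)$. Since $v_1,\ldots,v_k$ form an antichain and no maximal chain through $v_i$ contains any $v_j$ with $j\neq i$, by Proposition~\ref{prop:basic-tau-prop} the toggles $\tau_{v_1},\ldots,\tau_{v_k}$ pairwise commute, and each independently replaces $g(v_i)$ by $C/\Upsilon_{v_i}(g)$. I would then use the decomposition
\[
\Upsilon_{v_i}(g) = \frac{(\down^{-1}g)(v_i)\,(\up^{-1}g)(v_i)}{g(v_i)},
\]
obtained by splitting each maximal chain through $v_i$ at $v_i$ and accounting for the double count of $g(v_i)$. Applying $\tau_v$ to this intermediate labeling, and invoking the recursion $(\up^{-1}g)(v)=g(v)\sum_{y\gtrdot v}(\up^{-1}g)(y)$, yields a clean new label at $v$, namely $1/\bigl[(\up^{-1}g)(v)\sum_j 1/(\up^{-1}g)(v_j)\bigr]$. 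The outer toggles $\tau_{v_k}\cdots\tau_{v_1}$ still commute pairwise (by the same antichain argument), and each $\tau_{v_i}$ then produces an explicit formula for $T_v^*(g)(v_i)$ expressible in terms of $(\up^{-1}g)(v_i)$ and the sum $S_i := \sum_{y\gtrdot v_i,\,y\neq v}(\up^{-1}g)(y)$.

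To verify (1)--(2), expand $(\up^{-1} T_v^* g)(x)$ using the recursion and split the sum over chains from $x$ to $\widehat 1$ according to whether they pass through $v$. When $x$ is neither below nor equal to $v$, no chain from $x$ upward meets $\{v,v_1,\ldots,v_k\}$, so (1) is immediate. The case $x=v$ follows directly from the new label at $v$ and the recursion. For $x=v_i$, substitution of the explicit formula for $T_v^*(g)(v_i)$ collapses $(\up^{-1} T_v^* g)(v_i)$ to exactly $(\up^{-1}g)(v_i)$; for $x<v_i$, a downward induction along the recursion propagates the equality.

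The main obstacle is the bookkeeping in the outer block: each $\tau_{v_i}$ there uses the current $\Upsilon_{v_i}$, which now depends on the updated value at $v$, so one must carefully separate the contribution from chains through $v_i$ that pass through $v$ from contributions using other covers of $v_i$. The identity $(\up^{-1}g)(v_i) = g(v_i)\bigl[(\up^{-1}g)(v) + S_i\bigr]$, itself an instance of the $\up^{-1}$ recursion, is what makes the requisite cancellation clean. Since the paper establishes the noncommutative analogue in Theorem~\ref{thm:T-star-NC} and notes that the commutative case follows by restriction, the authors defer the proof there; the plan above mirrors that noncommutative argument in the commutative setting.
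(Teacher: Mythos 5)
Your proposal is correct and takes essentially the same route as the paper. The paper proves this result by establishing the noncommutative version (Theorem~\ref{thm:T-star-NC}) and restricting to the commutative case; your plan mirrors that very argument directly in the commutative setting, using the same structural steps (the decomposition $\Upsilon_{v}(g) = (\down^{-1}g)(v)(\up^{-1}g)(v)/g(v)$, which is Lemma~\ref{lem:meteor gorge} in disguise; computation of the new label at $v$ after the inner block; cancellation at the covers $v_i$; downward induction below; and the observation that nothing changes for $x \not\le v$), as you yourself note in the final paragraph.
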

\end{absolutelynopagebreak}

\begin{ex}\label{ex:T-star}
Again we consider the poset $P=[2]\times[3]$ with elements named as in Example~\ref{ex:2x3}.
Then $(2,2)$ covers $(2,1)$ and $(1,2)$, so
$T_{(2,2)}^*=\tau_{(2,1)}\tau_{(1,2)} \tau_{(2,2)} \tau_{(1,2)}\tau_{(2,1)}$.
Since
$(2,1)\parallel (1,2)$, the toggles $\tau_{(2,1)}$ and $\tau_{(1,2)}$ commute by
Proposition~\ref{prop:basic-tau-prop}, so we can apply them ``simultaneously''.
We verify
Theorem~\ref{thm:T-star} holds for this example in Figure~\ref{fig:T-star}.
\end{ex}

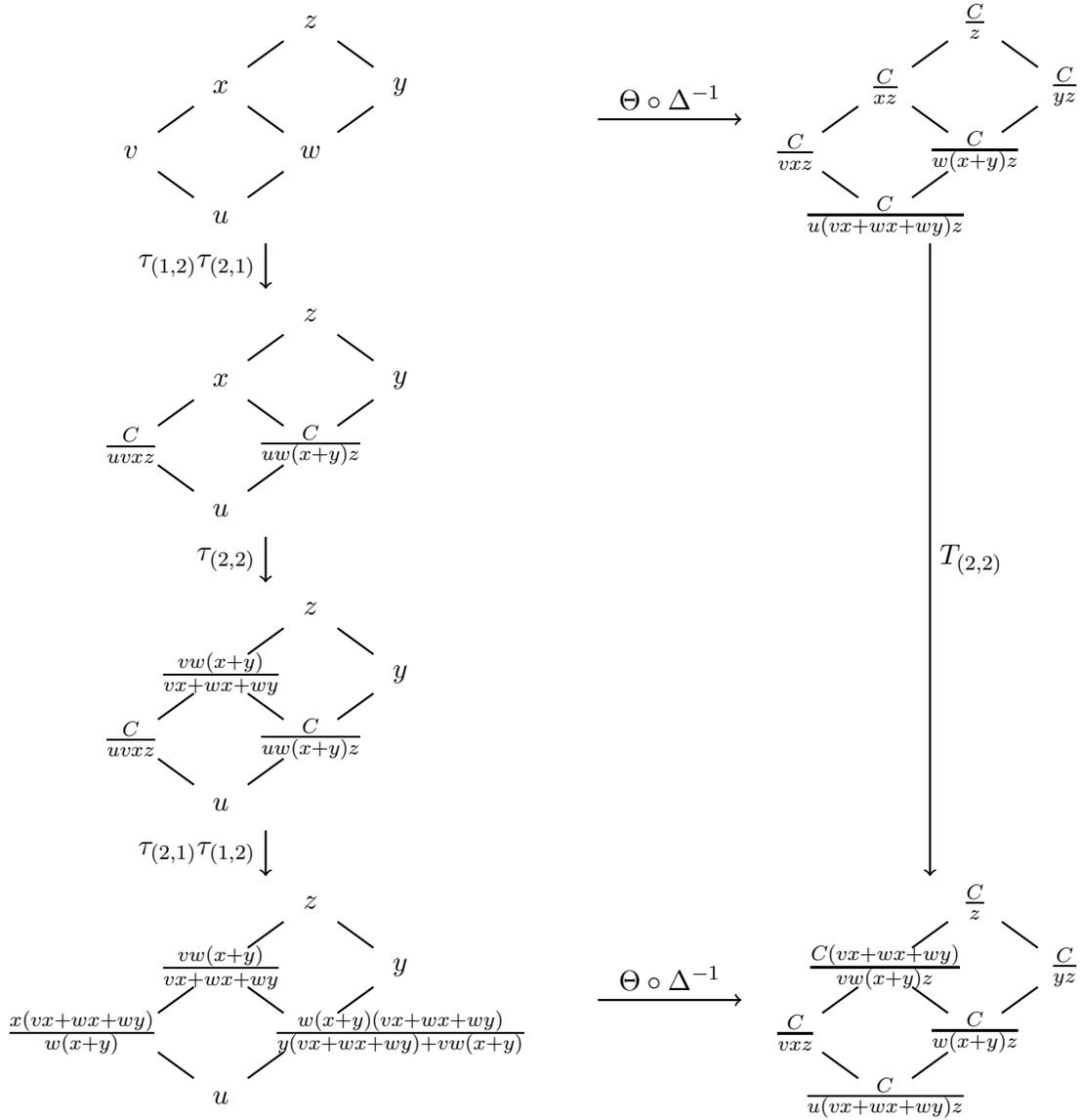
\begin{figure}
\begin{center}
\begin{small}
\begin{tikzpicture}[xscale=11/9,yscale=8/9]
\node at (0,0) {$u$};
\node at (-1,1) {$v$};
\node at (1,1) {$w$};
\node at (0,2) {$x$};
\node at (2,2) {$y$};
\node at (1,3) {$z$};
\draw[thick] (-0.3,0.3) -- (-0.7,0.7);
\draw[thick] (0.3,0.3) -- (0.7,0.7);
\draw[thick] (1.3,1.3) -- (1.7,1.7);
\draw[thick] (0.3,2.3) -- (0.7,2.7);
\draw[thick] (-0.3,1.7) -- (-0.7,1.3);
\draw[thick] (0.3,1.7) -- (0.7,1.3);
\draw[thick] (1.3,2.7) -- (1.7,2.3);
\draw[thick, ->] (0.5,-0.4) -- (0.5,-1.1);
\node[left] at (0.5,-0.75) {$\tau_{(1,2)}\tau_{(2,1)}$};
\draw[thick, ->] (4.2,1.5) -- (5.8,1.5);
\node[above] at (5,1.5) {$\Theta \circ \up^{-1}$};
\begin{scope}[shift={(0,-4.5)}]
\node at (0,0) {$u$};
\node at (-1,1) {$\frac{C}{uvxz}$};
\node at (1,1) {$\frac{C}{uw(x+y)z}$};
\node at (0,2) {$x$};
\node at (2,2) {$y$};
\node at (1,3) {$z$};
\draw[thick] (-0.3,0.3) -- (-0.7,0.7);
\draw[thick] (0.3,0.3) -- (0.7,0.7);
\draw[thick] (1.3,1.3) -- (1.7,1.7);
\draw[thick] (0.3,2.3) -- (0.7,2.7);
\draw[thick] (-0.3,1.7) -- (-0.7,1.3);
\draw[thick] (0.3,1.7) -- (0.7,1.3);
\draw[thick] (1.3,2.7) -- (1.7,2.3);
\draw[thick, ->] (0.5,-0.4) -- (0.5,-1.1);
\node[left] at (0.5,-0.75) {$\tau_{(2,2)}$};
\end{scope}
\begin{scope}[shift={(0,-9)}]
\node at (0,0) {$u$};
\node at (-1,1) {$\frac{C}{uvxz}$};
\node at (1,1) {$\frac{C}{uw(x+y)z}$};
\node at (0,2) {$\frac{vw(x+y)}{vx+wx+wy}$};
\node at (2,2) {$y$};
\node at (1,3) {$z$};
\draw[thick] (-0.3,0.3) -- (-0.7,0.7);
\draw[thick] (0.3,0.3) -- (0.7,0.7);
\draw[thick] (1.3,1.3) -- (1.7,1.7);
\draw[thick] (0.3,2.3) -- (0.7,2.7);
\draw[thick] (-0.3,1.7) -- (-0.7,1.3);
\draw[thick] (0.3,1.7) -- (0.7,1.3);
\draw[thick] (1.3,2.7) -- (1.7,2.3);
\draw[thick, ->] (0.5,-0.4) -- (0.5,-1.1);
\node[left] at (0.5,-0.75) {$\tau_{(2,1)}\tau_{(1,2)}$};
\end{scope}
\begin{scope}[shift={(0,-13.5)}]
\node at (0,0) {$u$};
\node at (-1.55,1) {$\frac{x(vx+wx+wy)}{w(x+y)}$};
\node at (2,1) {$\frac{w(x+y)(vx+wx+wy)}{y(vx+wx+wy)+vw(x+y)}$};
\node at (0,2) {$\frac{vw(x+y)}{vx+wx+wy}$};
\node at (2,2) {$y$};
\node at (1,3) {$z$};
\draw[thick] (-0.3,0.3) -- (-0.7,0.7);
\draw[thick] (0.3,0.3) -- (0.7,0.7);
\draw[thick] (1.3,1.3) -- (1.7,1.7);
\draw[thick] (0.3,2.3) -- (0.7,2.7);
\draw[thick] (-0.3,1.7) -- (-0.7,1.3);
\draw[thick] (0.3,1.7) -- (0.7,1.3);
\draw[thick] (1.3,2.7) -- (1.7,2.3);
\draw[thick, ->] (4.2,1.5) -- (5.8,1.5);
\node[above] at (5,1.5) {$\Theta \circ \up^{-1}$};
\end{scope}
\begin{scope}[shift={(7.4,-13.5)}]
\node at (0,0) {$\frac{C}{u(vx+wx+wy)z}$};
\node at (-1,1) {$\frac{C}{vxz}$};
\node at (1,1) {$\frac{C}{w(x+y)z}$};
\node at (0,2) {$\frac{C(vx+wx+wy)}{vw(x+y)z}$};
\node at (2,2) {$\frac{C}{yz}$};
\node at (1,3) {$\frac{C}{z}$};
\draw[thick] (-0.3,0.3) -- (-0.7,0.7);
\draw[thick] (0.3,0.3) -- (0.7,0.7);
\draw[thick] (1.3,1.3) -- (1.7,1.7);
\draw[thick] (0.3,2.3) -- (0.7,2.7);
\draw[thick] (-0.3,1.7) -- (-0.7,1.3);
\draw[thick] (0.3,1.7) -- (0.7,1.3);
\draw[thick] (1.3,2.7) -- (1.7,2.3);
\end{scope}
\begin{scope}[shift={(7.4,0)}]
\node at (0,0) {$\frac{C}{u(vx+wx+wy)z}$};
\node at (-1,1) {$\frac{C}{vxz}$};
\node at (1,1) {$\frac{C}{w(x+y)z}$};
\node at (0,2) {$\frac{C}{xz}$};
\node at (2,2) {$\frac{C}{yz}$};
\node at (1,3) {$\frac{C}{z}$};
\draw[thick] (-0.3,0.3) -- (-0.7,0.7);
\draw[thick] (0.3,0.3) -- (0.7,0.7);
\draw[thick] (1.3,1.3) -- (1.7,1.7);
\draw[thick] (0.3,2.3) -- (0.7,2.7);
\draw[thick] (-0.3,1.7) -- (-0.7,1.3);
\draw[thick] (0.3,1.7) -- (0.7,1.3);
\draw[thick] (1.3,2.7) -- (1.7,2.3);
\draw[thick, ->] (0.5,-0.4) -- (0.5,-10.1);
\node[right] at (0.5,-5.25) {$T_{(2,2)}$};
\end{scope}
\end{tikzpicture}
\end{small}
\end{center}
\caption{An illustration of Theorem~\ref{thm:T-star}.
See Example~\ref{ex:T-star}.}
\label{fig:T-star}
\end{figure}

\begin{defn}
Let $v\in P$ and let $(x_1,\dots,x_k)$ be a linear extension of the subposet $\{x\in
P\;|\;x < v\}$ of $P$.  Define 
$\eta_v=T_{x_1} T_{x_2}\cdots T_{x_k}$
and
$\tau_v^* = \eta_v T_v \eta_v^{-1}.$
(Note that $\eta_v$ is well-defined since
toggles corresponding to incomparable elements commute.)
\end{defn}

As the next theorem formalizes, $\tau_v^*$ mimics the antichain toggle $\tau_v$ in terms of order toggles.

\begin{absolutelynopagebreak}
\begin{thm}[{Analogue of~\cite[Thm.~2.19]{antichain-toggling}, generalized in Thm.~\ref{thm:tau-star-NC}}]\label{thm:tau-star}
Let $v\in P$.  Then the following diagram commutes on the domains in which the maps are defined.
\begin{center}
\begin{tikzpicture}[yscale=.6]
\node at (0,1.8) {$\kk^P$};
\node at (0,0) {$\kk^P$};
\node at (0,-1.8) {$\kk^P$};
\node at (3.25,1.8) {$\kk^P$};
\node at (3.25,0) {$\kk^P$};
\node at (3.25,-1.8) {$\kk^P$};
\draw[semithick, dashed, ->] (0,1.3) -- (0,0.5);
\node[left] at (0,0.9) {$\up^{-1}$};
\draw[semithick, dashed, ->] (0.7,-1.8) -- (2.5,-1.8);
\node[below] at (1.5,-1.8) {$\tau_v^*$};
\draw[semithick, dashed, ->] (0.7,1.8) -- (2.5,1.8);
\node[above] at (1.5,1.8) {$\tau_v$};
\draw[semithick, dashed, ->] (3.25,1.3) -- (3.25,0.5);
\node[right] at (3.25,0.9) {$\up^{-1}$};
\draw[semithick, dashed, ->] (0,-0.5) -- (0,-1.3);
\node[left] at (0,-0.9) {$\Theta$};
\draw[semithick, dashed, ->] (3.25,-0.5) -- (3.25,-1.3);
\node[right] at (3.25,-0.9) {$\Theta$};
\end{tikzpicture}
\end{center}
\end{thm}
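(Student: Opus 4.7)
The plan is to derive Theorem~\ref{thm:tau-star} as an immediate consequence of its noncommutative generalization, Theorem~\ref{thm:tau-star-NC}, which will be proven in Section~\ref{sec:noncommutative}. Since the commutative birational setting is recovered from the noncommutative one by imposing commutativity of all variables, any identity of (noncommutative) rational functions specializes to an identity of commutative rational functions by restriction, and the diagram asserted here commutes by restriction. This matches the deferral strategy already adopted for Theorem~\ref{thm:chi-R-commute-diagram}.

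For a more direct argument in the commutative realm, set $\phi = \Theta \circ \up^{-1}$; the theorem amounts to the assertion $\tau_v^* = \phi \circ \tau_v \circ \phi^{-1}$. A useful first reduction is coordinate-wise: both sides of this identity agree at coordinates $x \not\leq v$. On one side, $\tau_v$ alters only the label at $v$, and since $(\up^{-1}g)(x)$ is a sum over saturated chains from $x$ to $\widehat{1}$, such a chain passes through $v$ only when $x \leq v$; hence $\phi(\tau_v g)(x) = \phi(g)(x)$ for $x \not\leq v$. On the other side, $\tau_v^* = \eta_v T_v \eta_v^{-1}$ can only modify labels at elements $\leq v$, since $T_v$ touches only $v$ and $\eta_v$ is a product of order toggles at strict predecessors of $v$, so propagation stays within $\{x : x \leq v\}$. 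Verification thus reduces to this subposet.

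At $x = v$ itself, both sides can be computed explicitly: $\phi(\tau_v g)(v)$ unwinds using $\Upsilon_v g$ together with the recursion $(\up^{-1}g)(v) = g(v) \sum_{y \gtrdot v}(\up^{-1}g)(y)$, while $\tau_v^*(\phi(g))(v)$ is obtained by telescoping the conjugation $\eta_v T_v \eta_v^{-1}$ and comparing via the birational toggle formula for $T_v$. The main obstacle is coordinates $x < v$: the order toggles appearing in $\eta_v$ do not all commute pairwise, so the cumulative rational expression for $\tau_v^*(\phi(g))(x)$ must be developed carefully, and one needs the cancellations between $\eta_v$ and $\eta_v^{-1}$ to isolate exactly the change at $x$ required by the recursion for $\up^{-1}$. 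This bookkeeping is cleanest in the noncommutative setting of Section~\ref{sec:noncommutative}, where position-sensitive multiplication preserves structural information that commutativity would wash out; this is precisely why we prefer to prove Theorem~\ref{thm:tau-star-NC} first and recover Theorem~\ref{thm:tau-star} by specialization.
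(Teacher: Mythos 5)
Your proposal takes exactly the paper's approach: Theorem~\ref{thm:tau-star} is obtained from its noncommutative generalization, Theorem~\ref{thm:tau-star-NC}, by specializing the skew-field identities to the commutative birational realm, just as the paper states at the start of the subsection (``All the proofs over skew fields imply the commutative birational counterparts by restriction. So in this subsection we merely state these (new) results.''). Your supplementary sketch of a direct coordinate-wise argument is consistent, and your closing observation that the bookkeeping is cleanest in the noncommutative setting accurately reflects why the paper proves Theorem~\ref{thm:tau-star-NC} (via Lemma~\ref{lem:tau-star-NC} and Theorem~\ref{thm:T-star-NC}) and then restricts.
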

\end{absolutelynopagebreak}

Theorems~\ref{thm:T-star} and~\ref{thm:tau-star}
yield the following corollary.

\begin{cor}
There is an isomorphism from $\btog_A(P)$ to
$\btog_O(P)$ given by $\tau_v \mapsto \tau_v^*$
with inverse given by $T_v \mapsto T_v^*$.
\end{cor}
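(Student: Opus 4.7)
The plan is to realize both of the maps $\tau_v \mapsto \tau_v^*$ and $T_v \mapsto T_v^*$ as restrictions of conjugation by the single birational automorphism $\Phi := \Theta \circ \up^{-1}$ of $\kk^P$, so that the preceding two theorems do essentially all the work.

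First, I would reinterpret Theorems~\ref{thm:T-star} and~\ref{thm:tau-star} as conjugation identities. The commutativity of the diagram in Theorem~\ref{thm:tau-star} says $\Phi \circ \tau_v = \tau_v^* \circ \Phi$, i.e.\ $\tau_v^* = \Phi \, \tau_v \, \Phi^{-1}$. Symmetrically, Theorem~\ref{thm:T-star} gives $T_v^* = \Phi^{-1} \, T_v \, \Phi$. Thus both proposed maps are restrictions of inner automorphisms of the group of birational automorphisms of $\kk^P$.

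Next, I would define $\psi : \btog_A(P) \to \btog_O(P)$ by $g \mapsto \Phi \, g \, \Phi^{-1}$ and $\psi' : \btog_O(P) \to \btog_A(P)$ by $h \mapsto \Phi^{-1} \, h \, \Phi$. Each is automatically a homomorphism on the ambient group of birational automorphisms, so the only thing to verify is that each actually lands in the claimed target subgroup. For $\psi$, it suffices to check this on generators: by the previous paragraph, $\psi(\tau_v) = \tau_v^*$, and $\tau_v^* = \eta_v \, T_v \, \eta_v^{-1}$ is by definition a word in the order toggles, hence lies in $\btog_O(P)$. The argument for $\psi'$ is completely parallel: $\psi'(T_v) = T_v^*$ is by definition a word in the antichain toggles and so lies in $\btog_A(P)$. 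Since generators are preserved, the whole image of each homomorphism is contained in the target subgroup.

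Finally, $\psi' \circ \psi$ and $\psi \circ \psi'$ are both conjugation by the identity, hence act as the identity on $\btog_A(P)$ and $\btog_O(P)$ respectively, so $\psi$ and $\psi'$ are mutually inverse isomorphisms with $\psi(\tau_v) = \tau_v^*$ and $\psi'(T_v) = T_v^*$ as claimed. The only point requiring care is the birational technicality that $\Phi$ is defined only on a Zariski-dense open subset of $\kk^P$, but since every identity in the paper is interpreted as an equality of rational functions, this is not a genuine obstacle.
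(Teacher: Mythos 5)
Your proposal is correct and is the natural way to fill in the argument that the paper leaves implicit: the paper simply states that the corollary follows from Theorems~\ref{thm:T-star} and~\ref{thm:tau-star} without further comment, and the mechanism it is appealing to is exactly what you spell out, namely that the two commutative diagrams say $\tau_v^* = \Phi\,\tau_v\,\Phi^{-1}$ and $T_v^* = \Phi^{-1}\,T_v\,\Phi$ for $\Phi = \Theta\circ\up^{-1}$, so both maps are restrictions of mutually inverse inner automorphisms of the ambient group of birational automorphisms, and one need only check that generators land where claimed. Your verification that $\tau_v^* = \eta_v T_v \eta_v^{-1} \in \btog_O(P)$ and $T_v^* \in \btog_A(P)$ by inspection of the defining words, and the observation that images of generators under a homomorphism into the ambient group determine the image subgroup, are exactly the points that make the cited theorems suffice. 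The closing remark about the birational maps being defined only generically is the appropriate caveat, and the paper handles this the same way (equalities of rational functions, with \cite[\S1]{GrRo16} and \cite[\S3]{GrRo15} cited for the foundational discussion).
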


\section{Birational antichain toggles on graded posets}\label{sec:graded}

\subsection{Toggling by ranks}

Although rowmotion can be defined on any finite poset,
only graded posets are currently known to have nice behavior (periodicity, homomesy, cyclic sieving, resonance).
In fact, the name ``rowmotion'' references the natural factorization of this map as a
product of rank (``row'') toggles in the graded case~\cite{strikerwilliams}.

\begin{defn}[\cite{ec1ed2}]
A poset $P$ is \textbf{graded} if it has a well-defined \textbf{rank function} $\rk: P\ra \zz_{\geq 0}$ satisfying \begin{itemize}
\item $\rk(x)=0$ for any minimal element $x$,
\item $\rk(y)=\rk(x)+1$ if $y\gtrdot x$,
\item every maximal element $x$ of $P$ has $\rk(x)=r$, where $r$ is the {\bf rank} of $P$.
\end{itemize}
For $x\in P$, we call $\rk(x)$ the {\bf rank} of $x$.
\end{defn}

In this section, $P$ will always refer to a finite graded poset of rank $r$.

\begin{defn}
Define birational antichain \textbf{rank toggles} as follows: 
$$\rkT{i}:=\prod\limits_{\rk(x)=i} T_x,\hspace{0.3 in}\rktau{i}:=\prod\limits_{\rk(x)=i}
\tau_x,\hspace{0.3 in}\rkT{i}^*:=\prod\limits_{\rk(x)=i} T_x^*,\hspace{0.3
in}\rktau{i}^*:=\prod\limits_{\rk(x)=i} \tau_x^*.$$ 
Since poset elements of the same rank are pairwise incomparable, each product is of commuting toggles.
Thus, the products above are all well-defined involutions.
\end{defn}

Clearly $\BAR=\rktau{r}\rktau{r-1}\cdots \rktau{1}\rktau{0}$ and $\BOR = \rkT{0} \rkT{1} \cdots \rkT{r-1} \rkT{r}.$

Under the isomorphism between $\btog_A(P)$ and $\btog_O(P)$, the rank toggles get sent to products of rank toggles, as the following proposition shows.
The proof in the piecewise-linear realm~\cite[Propositions~2.30,~2.31]{antichain-toggling}
goes through unchanged, so we omit it here. 

\begin{prop}
Let $v\in P$ with $\rk(v)=i$.
We have the following identities involving rank toggles
(where the empty product $\rktau{-1}$ is the identity). 
\begin{itemize}
    \item $T_v^*=\rktau{i-1}\tau_v\rktau{i-1}$,
    \item $\rkT{i}^*=
\rktau{i-1}\rktau{i}\rktau{i-1}$,
    \item $\tau_v^*=\rkT{0}\rkT{1}\cdots \rkT{i-1} T_v \rkT{i-1}\cdots \rkT{1}\rkT{0}$,
    \item $\rktau{i}^*=\rkT{0}\rkT{1}\cdots \rkT{i-1}\rkT{i}\rkT{i-1}\cdots \rkT{1}\rkT{0}$.
\end{itemize}
\end{prop}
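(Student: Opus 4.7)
The plan is to prove the four identities in the stated order, with each building on its predecessors. The recurring tool is that $\tau_x, \tau_y$ (respectively, $T_x, T_y$) commute whenever $x, y$ are incomparable (respectively, neither covers the other), and all toggles are involutions.

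For the first identity, I would split $\rktau{i-1} = \left(\prod_{v_j \lessdot v} \tau_{v_j}\right)\left(\prod_{w \in W} \tau_w\right)$, where $W$ denotes the set of rank-$(i-1)$ elements not covered by $v$. Any $w \in W$ is incomparable to $v$ (otherwise a strict relation $w < v$ with rank difference one would force $w \lessdot v$), so $\tau_w$ commutes with $\tau_v$; it also commutes with each $\tau_{v_j}$ since same-rank elements are incomparable. Hence the factor $\prod_{w \in W} \tau_w$ slides outward in $\rktau{i-1}\tau_v\rktau{i-1}$ and cancels pairwise, leaving exactly $T_v^*$ by definition. The second identity follows as an immediate corollary: apply the first to each factor of $\rkT{i}^* = \prod_{\rk(v) = i} T_v^*$, and collapse the resulting consecutive pairs of $\rktau{i-1}$ via $\rktau{i-1}^2 = \mathrm{id}$.

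For the third identity, I would argue by induction on $i$ (with the trivial base $i=0$), establishing the stronger claim that for each $0 \leq j \leq i$,
\[
\rkT{j}\rkT{j+1}\cdots\rkT{i-1}\, T_v\, \rkT{i-1}\cdots\rkT{j+1}\rkT{j} \;=\; \tilde\eta^{(j)}\cdots\tilde\eta^{(i-1)}\, T_v\, \tilde\eta^{(i-1)}\cdots\tilde\eta^{(j)},
\]
where $\tilde\eta^{(j)} := \prod_{x \in L_j} T_x$ with $L_j = \{x < v : \rk(x) = j\}$. Writing $\rkT{j} = \tilde\eta^{(j)}\tilde\mu^{(j)}$ and letting $\tilde\mu^{(j)}$ denote the sub-product over rank-$j$ elements $x \not< v$, the core observation is the commutation lemma: \emph{if $\rk(x)=j$ with $x \not< v$, and $y \leq v$ with $\rk(y) > j$, then $T_x T_y = T_y T_x$}. (Were $y$ to cover $x$, then $x < y \leq v$ would contradict $x \not< v$; rank comparison rules out the reverse.) This lets $\tilde\mu^{(j)}$ pass through the inner expression and self-cancel at each inductive step. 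Setting $j=0$ yields $\Pi T_v \Pi^{-1} = \eta_v T_v \eta_v^{-1} = \tau_v^*$, where $\Pi = \rkT{0}\cdots\rkT{i-1}$ and $\eta_v$ is taken as a rank-by-rank product along an appropriate linear extension of $\{x : x < v\}$. The fourth identity then drops out of the calculation $\rktau{i}^* = \prod_{\rk(v)=i} \tau_v^* = \prod_{\rk(v)=i} \Pi T_v \Pi^{-1} = \Pi \rkT{i} \Pi^{-1}$: the interior $\Pi^{-1}\Pi$ pairs telescope since $\tau_u^*$ and $\tau_v^*$ commute for same-rank $u, v$ (the map $\tau_v \mapsto \tau_v^*$ being the restriction of a group isomorphism). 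The main subtlety throughout is the commutation lemma powering the induction.
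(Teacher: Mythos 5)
Your proof is correct, and the approach is essentially the one the paper intends. The paper omits its own proof, deferring instead to the piecewise-linear analogue in \cite[Props.~2.30--2.31]{antichain-toggling} with the remark that the argument ``goes through unchanged''; since your derivation uses only the involution property of birational toggles, the commutation criteria (incomparability for $\tau$, non-covering for $T$), and rank bookkeeping in a graded poset (so rank-$(i-1)$ elements below $v$ are exactly those covered by $v$), it relies on exactly the group-theoretic facts that make the PL proof carry over verbatim.
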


\subsection{Graded rescaling}\label{ss:g-rescaling}

Grinberg and the second author analyzed the effect of BOR (and also birational order rank
toggles) on graded rescalings of poset labelings, which highlighted the advantages of working with
graded posets~\cite{GrRo16}.  In this section, we give analogous results for the antichain
analogues, describing how $\BAR$ and birational antichain rank toggles $\rktau{i}$ act on
graded rescalings.

\begin{defn}[{\cite[\S6]{GrRo16}}]\label{defn:graded_resc}
Let $(a_0,\dots,a_r)\in(\kk^\times)^{r+1}$ (where $\kk^{\times}$ denotes the nonzero elements of
$\kk$) and
$g\in\kk^P$.
Then $(a_0,\dots,a_r)\flatbin g$ is the $\kk$-labeling of $P$
formed by taking $g$ and multiplying the labels of all elements of rank $i$
by $a_i$.  This is called a \textbf{graded rescaling} of $g$ by $(a_0,\dots,a_r)$.
\end{defn}

\begin{ex}In the positive root poset $\Phi^+(A_3)$

\begin{center}
\begin{tikzpicture}[yscale=7/9]
\begin{scope}[shift={(0,0)}]
\draw[thick] (-0.3, 1.7) -- (-0.7, 1.3);
\draw[thick] (0.3, 1.7) -- (0.7, 1.3);
\draw[thick] (-1.3, 0.7) -- (-1.7, 0.3);
\draw[thick] (-0.7, 0.7) -- (-0.3, 0.3);
\draw[thick] (0.7, 0.7) -- (0.3, 0.3);
\draw[thick] (1.3, 0.7) -- (1.7, 0.3);
\node at (0,2) {$  z  $};
\node at (-1,1) {$  x  $};
\node at (1,1) {$  y  $};
\node at (-2,0) {$  u  $};
\node at (0,0) {$  v  $};
\node at (2,0) {$  w  $};
\node at (-3.1,1) {$(2,4,9)\;\flatbin$};
\node at (2.9,1) {$=$};
\end{scope}
\begin{scope}[shift={(5.7,0)}]
\draw[thick] (-0.3, 1.7) -- (-0.7, 1.3);
\draw[thick] (0.3, 1.7) -- (0.7, 1.3);
\draw[thick] (-1.3, 0.7) -- (-1.7, 0.3);
\draw[thick] (-0.7, 0.7) -- (-0.3, 0.3);
\draw[thick] (0.7, 0.7) -- (0.3, 0.3);
\draw[thick] (1.3, 0.7) -- (1.7, 0.3);
\node at (0,2) {$  9z  $};
\node at (-1,1) {$  4x  $};
\node at (1,1) {$  4y  $};
\node at (-2,0) {$  2u  $};
\node at (0,0) {$  2v  $};
\node at (2,0) {$  2w  $.};
\end{scope}
\end{tikzpicture}
\end{center}
\end{ex}

\begin{prop}\label{prop:rank-tog-graded-rescaling}
Let $g\in\kk^P$ and $(a_0,\dots,a_r)\in(\kk^\times)^{r+1}$.
Then
$$\rktau{i}\big( (a_0,\dots,a_r)\flatbin g \big)=
\left( a_0,\dots,a_{i-1},\frac{1}{a_0\cdots a_r},a_{i+1},\dots,a_r \right) \flatbin\rktau{i}g.
$$
\end{prop}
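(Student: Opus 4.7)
My plan is to reduce the statement to a local check at each poset element, exploiting the fact that $\rktau{i}$ modifies labels only at rank $i$, and that every maximal chain of a graded poset of rank $r$ uses exactly one element from each rank.

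First I would observe that since the elements of $P$ at rank $i$ are pairwise incomparable, the toggles $\{\tau_v : \rk(v)=i\}$ commute by Proposition~\ref{prop:basic-tau-prop}, so $\rktau{i}$ is well defined, and $\tau_v$ leaves every label outside of $v$ untouched. Hence $\rktau{i}(h)$ differs from $h$ only at elements of rank $i$, and at a rank-$i$ element $v$ the new label depends only on $h$ through $\Upsilon_v h$. So both sides of the claimed identity agree on all elements $x$ with $\rk(x)\neq i$, since the left-hand side is just $a_{\rk(x)}\, g(x)$ and the right-hand side also multiplies the untouched label $g(x)$ by $a_{\rk(x)}$.

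Next I would do the key local computation at an arbitrary element $v$ of rank $i$. Because $P$ is graded of rank $r$, every maximal chain of $P$ through $v$ has the form $(y_0,y_1,\dots,y_r)$ with $\rk(y_j)=j$ and $y_i=v$. Let $h=(a_0,\dots,a_r)\flatbin g$. Then
\[
\Upsilon_v h \;=\; \sum_{(y_0,\dots,y_r)\in\mc_v(P)} \prod_{j=0}^{r} a_j\, g(y_j) \;=\; (a_0 a_1\cdots a_r)\,\Upsilon_v g,
\]
since the factor $a_0 a_1\cdots a_r$ is picked up once by every maximal chain (each rank contributes exactly one element). Therefore
\[
\bigl(\tau_v h\bigr)(v) \;=\; \frac{C}{\Upsilon_v h} \;=\; \frac{1}{a_0\cdots a_r}\cdot\frac{C}{\Upsilon_v g} \;=\; \frac{1}{a_0\cdots a_r}\,(\tau_v g)(v).
\]
This matches the right-hand side at $v$, because in the graded rescaling on the right the rank-$i$ rescaling factor is exactly $\tfrac{1}{a_0\cdots a_r}$ and $\rktau{i}g$ has value $(\tau_v g)(v)$ at $v$.

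Combining these two cases, $\rktau{i}\bigl((a_0,\dots,a_r)\flatbin g\bigr)$ and $\bigl(a_0,\dots,a_{i-1},\tfrac{1}{a_0\cdots a_r},a_{i+1},\dots,a_r\bigr)\flatbin \rktau{i}g$ agree at every element of $P$, proving the identity. There is no genuine obstacle here; the only mildly delicate point is the uniformity observation that every maximal chain of a graded poset of rank $r$ meets each rank in exactly one element, which is precisely what causes the rescaling factors to collect into the single scalar $a_0\cdots a_r$ independent of the chain.
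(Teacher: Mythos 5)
Your proof is correct and follows essentially the same route as the paper's: the key observation that $\Upsilon_v h = (a_0\cdots a_r)\,\Upsilon_v g$ because every maximal chain of a graded poset of rank $r$ meets each rank exactly once, followed by the straightforward check that $(\tau_v h)(v) = \tfrac{1}{a_0\cdots a_r}(\tau_v g)(v)$ at rank $i$ and that labels at other ranks are unchanged. The only difference is cosmetic ordering of the two cases.
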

The analogous result for birational order toggles~\cite[Prop.~39]{GrRo16} has 
$\ds \frac{a_{i-1}a_{i+1}}{a_{i}}$ in the $i$th position. 
\begin{proof}
Let $h=(a_0,\dots,a_r)\flatbin g$.  Let $v\in P$ have $\rk(v)=i$.  Then every maximal chain
$(y_0,\dots,y_r)$ in $P$ contains one element from each rank level.  Therefore, 
\begin{align*}
\Upsilon_v h &= \sum\limits_{(y_0,\dots,y_r)\in\mc_v(P)} h(y_0)h(y_1)\cdots h(y_{r-1}) h(y_r)\\
&= \sum\limits_{(y_0,\dots,y_r)\in\mc_v(P)} a_0 g(y_0) a_1 g(y_1)\cdots a_{r-1} g(y_{r-1}) a_r g(y_r)\\
&= a_0a_1\cdots a_r \sum\limits_{(y_0,\dots,y_r)\in\mc_v(P)} g(y_0) g(y_1)\cdots g(y_{r-1}) g(y_r)\\
&= a_0a_1\cdots a_r \Upsilon_v g.
\end{align*}
Then for every $v$ of rank $i$,
$$(\rktau{i}h)(v)=\frac{C}{\Upsilon_v h}=\frac{C}{a_0a_1\cdots a_r \Upsilon_v g}
=\frac{1}{a_0a_1\cdots a_r}(\rktau{i}g)(v),$$ 
while for any $x$ of rank $j\not=i$,
$(\rktau{i}h)(x)=h(x)=a_j g(x) = a_j (\rktau{i}g)(x)$.
Thus, $$\rktau{i}h = \left( a_0,\dots,a_{i-1},\frac{1}{a_0\cdots a_r},a_{i+1},\dots,a_r \right) \flatbin \rktau{i}g.$$
\end{proof}
A straightforward induction argument shows the following analogue of
\cite[Prop.~40]{GrRo16}. 
\begin{prop}\label{prop:BAR-graded-rescaling}
Let $g\in\kk^P$ and $(a_0,\dots,a_r)\in(\kk^\times)^{r+1}$.
Then
$$\BAR\big( (a_0,a_1, \dots,a_r)\flatbin g \big)=
\left( \frac{1}{a_0 a_1\cdots a_r}, a_0, a_1, \dots, a_{r-2}, a_{r-1} \right) \flatbin\BAR(g).
$$
\end{prop}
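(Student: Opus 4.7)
The plan is to apply Proposition~\ref{prop:rank-tog-graded-rescaling} iteratively to each rank toggle in the factorization $\BAR = \rktau{r}\rktau{r-1}\cdots\rktau{1}\rktau{0}$, tracking how the rescaling tuple evolves after each application. The bookkeeping is mechanical, but the pattern becomes transparent once one notices that the product of all entries in the current tuple always takes a particularly simple form.

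Setting $b := 1/(a_0 a_1 \cdots a_r)$, I claim by induction on $i \in \{0,1,\dots,r\}$ that
\[
\rktau{i}\rktau{i-1}\cdots\rktau{0}\big((a_0,a_1,\dots,a_r)\flatbin g\big)
= (b,\, a_0,\, a_1,\, \dots,\, a_{i-1},\, a_{i+1},\, \dots,\, a_r)\flatbin (\rktau{i}\cdots\rktau{0}\, g).
\]
The base case $i=0$ is Proposition~\ref{prop:rank-tog-graded-rescaling} applied directly (with $i=0$), which yields tuple $(b, a_1, a_2, \dots, a_r)$.

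For the inductive step, suppose the claim holds for $i$. The product of the entries of the current rescaling tuple is
\[
b \cdot (a_0 a_1 \cdots a_{i-1}) \cdot (a_{i+1} \cdots a_r)
= \frac{a_0 \cdots a_{i-1} a_{i+1} \cdots a_r}{a_0 a_1 \cdots a_r}
= \frac{1}{a_i}.
\]
Now apply Proposition~\ref{prop:rank-tog-graded-rescaling} with the tuple $(b, a_0, \dots, a_{i-1}, a_{i+1}, \dots, a_r)$ and index $i+1$: the $(i+1)$-th entry (currently $a_{i+1}$) gets replaced by the reciprocal of the product computed above, namely $a_i$. This produces the tuple $(b, a_0, a_1, \dots, a_{i-1}, a_i, a_{i+2}, \dots, a_r)$, which is exactly the claimed form at index $i+1$.

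Setting $i=r$ yields $\BAR\big((a_0,\dots,a_r)\flatbin g\big) = (b, a_0, a_1, \dots, a_{r-1}) \flatbin \BAR(g)$, as desired. There is no real obstacle here, as the argument amounts to a telescoping computation of the tuple product at each stage; the only minor subtlety is making sure the product identity $b \cdot a_0 \cdots a_{i-1} \cdot a_{i+1} \cdots a_r = 1/a_i$ is correctly invoked to feed Proposition~\ref{prop:rank-tog-graded-rescaling} at the next step.
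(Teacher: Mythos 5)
Your proof is correct and is precisely the ``straightforward induction argument'' the paper alludes to (the paper states the proposition without supplying details). Writing $\BAR = \rktau{r}\cdots\rktau{0}$ and iterating Proposition~\ref{prop:rank-tog-graded-rescaling} is the intended route, and your key bookkeeping observation---that after toggling ranks $0$ through $i$ the product of the entries of the rescaling tuple equals $1/a_i$, so the replacement at position $i+1$ is simply $a_i$---is exactly what makes the induction go through cleanly. I checked the base case, the inductive step, and the endpoint at $i=r$ (which gives the tuple $(b,a_0,\dots,a_{r-1})$ with $b = 1/(a_0\cdots a_r)$); all are correct.
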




The key idea is that, like $\BOR$, applying $\BAR$ to a graded rescaling of $g$ yields a graded
rescaling of $\BAR(g)$.  

\subsection{Gyration}

For a finite graded poset $P$, Striker defined an
element of $\tog_\calj(P)$ called \emph{gyration},
which is conjugate to order-ideal rowmotion
$\rowJ$~\cite[\S6]{strikerRS}.
The name \emph{gyration} comes from  Wieland's action of the same name on alternating sign matrices~\cite{wieland}. 
Here we give a birational lifting of gyration using the
same definition, which remains conjugate to
$\BOR$ in $\tog_O(P)$ because the key algebraic properties of the order-ideal toggle group lift
to the birational realm. In fact this idea goes back to Cameron and Fon-Der-Flaass,
who considered (slightly more general) ``rank-permuted rowmotions,'' and showed that they
were all conjugate in the order-ideal toggle group~\cite[Lemma 2]{cameronfonder}.

\begin{defn}
\textbf{Birational order gyration} is the birational map
$\BOG: \kk^P \dra \kk^P$ that applies the birational order toggles for elements in even ranks first, then the odd ranks.
\end{defn}

For example, if $P$ has rank 7, then
$\BOG=\rkT{7}\rkT{5}\rkT{3}\rkT{1}\rkT{6}\rkT{4}\rkT{2}\rkT{0}$.
This is well-defined, since it does not matter in which order the elements of even rank are toggled, and similarly for odd rank.
This is because rank toggles $\rkT{i}, \rkT{j}$ commute when $|i-j|\not= 1$, where there are
no cover relations between an element of rank $i$ and an element of rank $j$. 
Since $\BOR$ and $\BOG$ are conjugate in $\tog_O(P)$,
they have the same order on any graded poset, and sometimes other properties can be transferred between the two maps.

On the other hand, birational \emph{antichain}
rank toggles never commute, so we need to specify a toggle order in the following definition.
This definition of $\BAG$ is the image of $\BOG$
under our explicit isomorphism between $\btog_O(P)$ and $\btog_A(P)$.

\begin{defn}
\textbf{Birational antichain gyration} is the birational map
$\BAG:\kk^P \dra \kk^P$ that first applies the antichain toggles for odd ranks starting from the bottom of the poset
up to the top, and then toggles the even ranks from the top of the poset
down to the bottom.
\end{defn}

For example, if $P$ has rank 7, then
$\BAG=\rktau{0}\rktau{2}\rktau{4}\rktau{6}\rktau{7}\rktau{5}\rktau{3}\rktau{1}$.
We omit the proof of the following theorem here, which is
completely analogous to~\cite[Theorem~2.34]{antichain-toggling} but now lifted to the birational realm.

\begin{thm}
The following diagram commutes.
\begin{center}
\begin{tikzpicture}[yscale=.6]
\node at (0,1.8) {$\kk^P$};
\node at (0,0) {$\kk^P$};
\node at (0,-1.8) {$\kk^P$};
\node at (3.25,1.8) {$\kk^P$};
\node at (3.25,0) {$\kk^P$};
\node at (3.25,-1.8) {$\kk^P$};
\draw[semithick, dashed, ->] (0,1.3) -- (0,0.5);
\node[left] at (0,0.9) {$\up^{-1}$};
\draw[semithick, dashed, ->] (0.7,-1.8) -- (2.5,-1.8);
\node[below] at (1.5,-1.8) {$\BOG$};
\draw[semithick, dashed, ->] (0.7,1.8) -- (2.5,1.8);
\node[above] at (1.5,1.8) {$\BAG$};
\draw[semithick, dashed, ->] (3.25,1.3) -- (3.25,0.5);
\node[right] at (3.25,0.9) {$\up^{-1}$};
\draw[semithick, dashed, ->] (0,-0.5) -- (0,-1.3);
\node[left] at (0,-0.9) {$\Theta$};
\draw[semithick, dashed, ->] (3.25,-0.5) -- (3.25,-1.3);
\node[right] at (3.25,-0.9) {$\Theta$};
\end{tikzpicture}
\end{center}
\end{thm}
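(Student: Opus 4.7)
The plan is to reduce the claim to an identity in the birational order toggle group $\btog_O(P)$, using the isomorphism $\btog_A(P)\to \btog_O(P)$ sending $\tau_v\mapsto\tau_v^*$ from the corollary of Theorems~\ref{thm:T-star} and~\ref{thm:tau-star}. That isomorphism is equivariant with respect to $\Theta\circ\up^{-1}$; in particular, the diagram in the theorem commutes if and only if the image of $\BAG$ under the isomorphism equals $\BOG$. Writing $\BAG$ as a product of the rank toggles $\rktau{i}$ and applying the isomorphism termwise, this image becomes
\[
\rktau{0}^*\,\rktau{2}^*\,\cdots\,\rktau{r}^*\,\cdots\,\rktau{3}^*\,\rktau{1}^*
\]
(with the extremal parities depending on the parity of $r$), so the task reduces to verifying that this product equals $\BOG=\rkT{r}\cdots\rkT{3}\rkT{1}\rkT{r-1}\cdots\rkT{2}\rkT{0}$ in $\btog_O(P)$.

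First I would expand each factor using the identity $\rktau{i}^*=\rkT{0}\rkT{1}\cdots\rkT{i-1}\rkT{i}\rkT{i-1}\cdots\rkT{1}\rkT{0}$ from the rank-toggle proposition above Section~\ref{ss:g-rescaling}. This rewrites the expression as a long word in the birational order rank toggles $\rkT{0},\ldots,\rkT{r}$. Next I would simplify that word using the two fundamental relations satisfied by the $\rkT{i}$ in $\btog_O(P)$: each $\rkT{i}$ is an involution, and $\rkT{i}$ commutes with $\rkT{j}$ whenever $|i-j|\neq 1$. The first follows from Proposition~\ref{prop:BTogO-toggle-inv-commute} (each order toggle $T_v$ is an involution, and those within the same rank commute, since elements of a common rank are pairwise incomparable); the second follows because no cover relations span non-adjacent ranks. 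Telescoping cancellations of the form $\rkT{k}\rkT{k}=\mathrm{id}$, interleaved with migration of commuting factors, then collapse the long word down to the even-then-odd alternating form defining $\BOG$.

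The main obstacle is purely the bookkeeping of these cancellations, a computation already carried out in full in the piecewise-linear proof of~\cite[Theorem~2.34]{antichain-toggling}. The crucial observation is that this argument invokes \emph{only} the involution and non-adjacent commutation relations among the rank order toggles $\rkT{i}$, together with the expansion $\rktau{i}^*=\rkT{0}\cdots\rkT{i}\cdots\rkT{0}$; each of these ingredients holds verbatim in $\btog_O(P)$ (the proofs do not rely on the combinatorial or piecewise-linear nature of the toggles). Consequently, the symbolic manipulation transfers word-for-word from the piecewise-linear realm to the birational realm, yielding the desired identity and hence the commutativity of the diagram.
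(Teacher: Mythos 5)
Your proposal is correct and takes essentially the same approach the paper intends: the paper explicitly omits the proof and states it is "completely analogous to [Theorem~2.34]{antichain-toggling} but now lifted to the birational realm," and your argument carries out exactly that reduction — using the equivariant isomorphism $\tau_v\mapsto\tau_v^*$ to reduce the diagram to a relation among the rank toggles $\rkT{i}$, and noting that the cancellation argument uses only the involution and non-adjacent commutation relations, which hold verbatim in $\btog_O(P)$.
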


\section{Noncommutative (skew field) dynamics}\label{sec:noncommutative}

\subsection{Introduction to skew field dynamics}

Darij Grinberg has conjectured that the periodicity of BOR-motion on certain nice posets
continues to hold even when extended to labelings of $P$ by elements that do not
necessarily commute.  Here we study the lifting to this setting of the
antichain perspective and relate it to the order perspective. We first recall Grinberg's original toggling definition of this map, which we call \textbf{NOR-motion}, and show that it is also given as a composition of three transfer maps as in the commutative case. 

Next we define the antichain analogues of toggling and \textbf{NAR-motion}, which can also be given in terms of the transfer maps. Along the way we give an explicit isomorphism between the group of order toggles and the group of antichain toggles in the noncommutative case.

Let $\bbs$ denote a \textbf{skew field} that contains an infinite field $\ff$ as a
subfield.  Any such skew field $\bbs$ of characteristic zero satisfies this condition, as it contains an isomorphic copy of $\qq$.
We will now work with $\bbs$-labelings in $\bbs^P:=\{f:P\rightarrow \bbs \}$.
We always require the generic constant $C\in \bbs$ to be in the
\emph{center} of $\bbs$ (i.e., $C$ commutes with every element of $\bbs$). The proofs in
this section specialize to show the results of \S\,\ref{ss:isoBR}. 

\begin{notation}
For $x\in\bbs$, write $\overline{x}$ for $x^{-1}$.
\end{notation}

\begin{notation}
The (commutative and associative) operation {\bf parallel sum} is defined by $x\parallelsum y=\overline{\overline{x}+\overline{y}}$.
We use $\sumpar$ as the analogue of $\sum$ with $+$ replaced with $\parallelsum$.
\end{notation}

The following {\bf reciprocity} relation, analogous to one in~\cite[\S5]{einpropp}, is easy to show.
\begin{prop}\label{prop:reciprocity}
For $x_1,\dots,x_n\in \bbs$,
$$\left(\sumparlimits{i=1}{n} x_i\right)
\left(\sum\limits_{i=1}^n \overline{x_i}\right)=
\left(\sum\limits_{i=1}^n \overline{x_i}\right)
\left(\sumparlimits{i=1}{n} x_i\right) =1.
$$
\end{prop}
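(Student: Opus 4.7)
The plan is to reduce the identity to the elementary fact that $s\cdot s^{-1}=s^{-1}\cdot s=1$ in the skew field $\bbs$, after recognizing that the iterated parallel sum collapses to the inverse of the ordinary sum of inverses.

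First I would establish, by induction on $n$, the closed form
\[
\sumparlimits{i=1}{n} x_i \;=\; \overline{\,\sum_{i=1}^{n} \overline{x_i}\,}.
\]
The base case $n=1$ is $x_1=\overline{\,\overline{x_1}\,}$, which holds because inversion in a skew field is an involution. For the inductive step, using commutativity and associativity of $\parallelsum$ (which are immediate from the commutativity and associativity of addition in $\bbs$ applied to $\overline{x_i}$), I would compute
\[
\sumparlimits{i=1}{n} x_i \;=\; \left(\sumparlimits{i=1}{n-1} x_i\right)\parallelsum x_n
\;=\; \overline{\,\overline{\sumparlimits{i=1}{n-1} x_i} + \overline{x_n}\,}
\;=\; \overline{\,\sum_{i=1}^{n-1}\overline{x_i} + \overline{x_n}\,}
\;=\; \overline{\,\sum_{i=1}^{n}\overline{x_i}\,},
\]
where the middle step uses the definition $a\parallelsum b=\overline{\overline{a}+\overline{b}}$ and the inductive hypothesis lets us replace $\overline{\sumparlimits{i=1}{n-1} x_i}$ by $\sum_{i=1}^{n-1}\overline{x_i}$.

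Second, setting $S:=\sum_{i=1}^{n}\overline{x_i}\in\bbs$, the closed form says $\sumparlimits{i=1}{n} x_i = \overline{S} = S^{-1}$. Both asserted products then read $S^{-1}\cdot S$ and $S\cdot S^{-1}$, each of which equals $1$. This disposes of the proposition without needing to invoke any property specific to $C$ or the poset setup. The only genuine content is the inductive collapse in step one; the rest is formal. No real obstacle arises — the proof is essentially a tautology once one names $S$ — so the write-up should simply present the induction cleanly and then conclude with the cancellation.
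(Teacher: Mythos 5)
Your proof is correct, and it takes the natural route: unwind $\sumparlimits{i=1}{n} x_i$ to $\overline{\sum_{i=1}^n \overline{x_i}}$ by induction and then observe the products are just $S^{-1}S$ and $SS^{-1}$. The paper itself gives no proof (it simply asserts the identity is "easy to show"), so there is nothing further to compare against; your write-up is a reasonable filling-in of that gap.
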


\begin{remark}
It can be quite tricky to simplify expressions in a skew field until one gains some
experience.  
For example $x\parallelsum y = \overline{\overline{x} + \overline{y}}$ can equivalently be written as
\begin{itemize}
    \item $y\overline{(x+y)}x$ by multiplying on the left by $y\overline{y}$ and the right by $\overline{x}x$
    and using the property $\overline{AB}=\overline{B}\cdot
    \overline{A}$,
    \item or as $x\overline{(x+y)}y$ by multiplying on the left by $x\overline{x}$ and the right by $\overline{y}y$,
\end{itemize}
but is {\bf not equivalent} to
$yx\overline{(x+y)}$, $\overline{(x+y)}xy$, $xy\overline{(x+y)}$, or $\overline{(x+y)}yx$.
We can simplify $\overline{y}x\overline{(x+y)}y$ as
$$\overline{y}x\overline{(x+y)}y=
\overline{y}y\overline{(x+y)}x
=\overline{(x+y)}x.$$
Many other expressions are more challenging to rewrite in equivalent forms.  For example,
$$\overline{\overline{v}\cdot\overline{x}+
\overline{w}\cdot\overline{(x+y)}}=
(x+y)w\overline{(xv+xw+yw)}xv=
xv\overline{(xv+xw+yw)}(x+y)w$$
and
$$\overline{\overline{(v+w)}\cdot\overline{x}+\overline{w}
\cdot\overline{y}}=yw\overline{(xv+xw+yw)}x(v+w)=
x(v+w)\overline{(xv+xw+yw)}yw$$
are expressions that have arisen naturally in this study.  
\end{remark}
\begin{remark}\label{rem:partialMap}
When we move to the noncommutative setting, we no longer have the notions from algebraic
geometry and commutative 
algebra of \textit{Zariski
topology} and \textit{birational maps}, so we call the analogous maps \textbf{partial maps}.
These maps will not be defined when 
expressions in the denominator (i.e., expression we take the inverse of) become zero, so the domains need to be restricted somehow.
We do not try to address this issue formally, which would take us too far \emph{afield}.  In
particular, all the maps we consider are noncommutative analogues of our earlier birational
maps.  At a minimum, any equalities stated will hold as birational identities whenever we restrict the variables to lie
in the infinite subfield $\ff$. Practically speaking, they will hold in much greater generality.
\end{remark}

\begin{defn}[Darij Grinberg]
Let $v\in P$.  The \textbf{noncommutative order toggle} is the partial map $T_v:\bbs^P\dra
\bbs^P$ defined as follows.  For this definition, we extend any $f\in\bbs^P$ to a
$\widehat{P}$-labeling by setting $f\big(\widehat{0}\big)=1$ and $f\big( \widehat{1}
\big) = C$, recalling $C$ is in the center of $\bbs$.  Then 

$$\big(T_v(f)\big)(w) = \left\{\begin{array}{ll}
\left(\sum\limits_{u\in\widehat{P},u\lessdot v}f(u)\right)
\overline{f(v)} \left(\sumpar\limits_{u\in\widehat{P},u\gtrdot v}f(u)\right)
 &\text{if $w=v$}\\\vspace{-7 pt}\\
f(w) &\text{if $w\not=v$}
\end{array}\right.
$$
\end{defn}

Toggles in the noncommutative realm are {\bf not involutions}, so it is surprising they have any nice features at all.  
Initially, the term ``toggle'' was chosen
to be an involution, but we continue to use it here to keep terminology consistent with other realms.
We will call the inverse of a toggle an {\bf elggot}.

\begin{defn}
Let $v\in P$.  The \textbf{noncommutative order elggot} is the partial map $E_v:\bbs^P\dra
\bbs^P$ defined as follows.
Then 
$$\big(E_v(f)\big)(w) = \left\{\begin{array}{ll}
\left(\sumpar\limits_{u\in\widehat{P},u\gtrdot v}f(u)\right)
\overline{f(v)} \left(\sum\limits_{u\in\widehat{P},u\lessdot v}f(u)\right)
 &\text{if $w=v$}\\\vspace{-7 pt}\\
f(w) &\text{if $w\not=v$}
\end{array}\right.
$$
\end{defn}
A straightforward computation shows that $T_{v}$ and $E_{v}$ are inverse partial
maps.  
Order toggles and elggots commute with each other in the skew-field setting exactly when they do 
in the (commutative) birational realm.  We omit the elementary proof.

\begin{prop}
Let $u,v\in P$.  If neither $u$ nor $v$ covers the other, then
$T_uT_v=T_vT_u$, $E_uE_v=E_vE_u$, $T_uE_v=E_vT_u$, and
$E_uT_v=T_vE_u$.
\end{prop}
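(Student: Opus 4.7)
The plan is to reduce the statement to a single transparent observation: each toggle $T_{w}$ (and likewise each elggot $E_{w}$) modifies only the label at $w$, and the new label at $w$ depends only on the labels of the elements in $\widehat{P}$ that cover $w$ or are covered by $w$. Thus commutation of $T_{u}$ and $T_{v}$ (or any combination involving $E$'s) fails only when applying one of the maps can alter an input used by the other.

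First I would dispose of the trivial case $u=v$, noting $T_{u}T_{u}$ and $E_{u}E_{u}$ commute tautologically with themselves and $T_{u}E_{u}=E_{u}T_{u}=\mathrm{id}$ by the already noted fact that $T_{v}$ and $E_{v}$ are inverse partial maps. So assume $u\neq v$ with neither covering the other. Fix $f\in\bbs^{P}$ and set $g=T_{v}(f)$, $h=T_{u}(f)$. Since $T_{v}$ changes only the $v$-label, $g(x)=f(x)$ for all $x\neq v$; symmetrically $h(x)=f(x)$ for all $x\neq u$. Therefore at every element $x\notin\{u,v\}$ we clearly have $T_{u}(g)(x)=g(x)=f(x)=h(x)=T_{v}(h)(x)$.

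The substance of the argument is the comparison at $x=u$ and $x=v$. For $x=u$: the value $T_{u}(g)(u)$ is an expression in $g(u)$ together with the labels $g(y)$ for $y\in\widehat{P}$ with $y\lessdot u$ or $y\gtrdot u$. Because $v$ is, by hypothesis, neither a cover nor covered by $u$ in $P$ (and equals neither $\widehat{0}$ nor $\widehat{1}$), $v$ does not appear among these $y$'s, so $g(y)=f(y)$ at every $y$ in this neighborhood and also at $u$ itself. Hence $T_{u}(g)(u) = T_{u}(f)(u) = h(u) = T_{v}(h)(u)$, the last equality because $T_{v}$ does not touch the $u$-label. The argument at $x=v$ is symmetric. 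This proves $T_{u}T_{v}=T_{v}T_{u}$.

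The other three identities follow by the same local-support argument, replacing $T_{u}$ or $T_{v}$ with the corresponding elggot, since $E_{w}$ likewise modifies only the label at $w$ and depends only on the labels of $w$'s covers and cocovers in $\widehat{P}$. I do not anticipate a real obstacle: there is no cancellation, no simplification in the skew field, and no delicate domain issue beyond the standard convention from Remark~\ref{rem:partialMap} that we work wherever the relevant denominators are nonzero. The proof is essentially the observation that the ``support'' and ``stencil'' of each partial map are unchanged in the noncommutative setting, so the commutation criterion is identical to the birational case of Proposition~\ref{prop:BTogO-toggle-inv-commute}.
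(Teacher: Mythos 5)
Your proof is correct and is exactly the evident local-support argument the authors have in mind: the paper states immediately before this proposition that ``we omit the elementary proof,'' and the argument you give (each toggle/elggot writes only to its own vertex, and reads only from that vertex and its covers/cocovers in $\widehat{P}$, so the stencils are disjoint when neither element covers the other) is the standard way to supply it. Your handling of the case $u=v$ and the uniform treatment of the four identities via the identical support argument for $E_w$ are both fine.
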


\begin{defn}[Darij Grinberg]
Let $(x_1,x_2,\dots,x_n)$ be any linear extension of a finite poset $P$.  Then the partial
map $\NOR=T_{x_1} T_{x_2} \cdots T_{x_n}$ is called \textbf{noncommutative order rowmotion
(NOR-motion)}. 
\end{defn}

\begin{conj}[Darij Grinberg]\label{conj:NOR-prod-chains}
On $[a]\times[b]$, $\NOR$ has order $a+b$.
\end{conj}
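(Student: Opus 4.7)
The plan is to reduce Conjecture~\ref{conj:NOR-prod-chains} to the corresponding statement for $\NAR$ via the transfer-map isomorphism, and then to establish periodicity of NAR-motion on $[a]\times[b]$ by adapting (and carefully noncommutatizing) the Grinberg--Roby proof of BOR-periodicity on rectangles.

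First, the noncommutative transfer identity $\NAR = \down \circ \Theta \circ \up^{-1}$ (the forthcoming Theorem~\ref{thm:NAR-transfer}) yields $\NOR = \down^{-1} \circ \NAR \circ \down$ as partial maps on $\bbs^{P}$, so $\NOR$ and $\NAR$ have the same order on every finite poset; hence it suffices to show $\NAR^{a+b}=\mathrm{id}$ on $\bbs^{[a]\times[b]}$. Since $\bbs$ contains the commutative field $\ff$, restricting $\NOR^{k}$ to $\ff^{P}$ recovers BOR, whose order is exactly $a+b$ on $[a]\times[b]$. Thus any $k$ with $\NOR^{k}=\mathrm{id}$ must satisfy $k\geq a+b$, and only the upper bound $\NAR^{a+b}=\mathrm{id}$ requires new work.

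To attack that upper bound, I would try to import the path-combinatorics viewpoint used for BOR on rectangles to the noncommutative realm. In $[a]\times[b]$ every maximal chain is a lattice path of length $a+b-1$, and labels after $k$ iterations of $\NAR$ can in principle be written as parallel sums of ordinary sums of left-to-right products of path labels, with the reading order dictated by the definition of $\tau_v$. The hope is that the octahedron-type telescoping identity underlying the commutative proof has a noncommutative analogue in $\bbs$; I would first test this by extending Example~\ref{ex:2x3} and Figure~\ref{fig:BAR[2]x[3]} to a skew-field setting, verifying $\NAR^{a+b}=\mathrm{id}$ by hand on $[2]\times[2]$ and $[2]\times[3]$, and from the bookkeeping of the rewriting steps extracting the general local identity.

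The main obstacle will be proving that local noncommutative identity. In $\bbs$ one cannot clear denominators, factor sums, or collect like terms; expressions of the form $\overline{\overline{x}+\overline{y}}$ typically admit several superficially different but in fact equal rewritings, and these must be chosen in exactly the right order at each step. A promising auxiliary tool is the noncommutative Stanley--Thomas word of~\cite{STword}, which rotates cyclically under $\NAR$ with period dividing $a+b$ on $[a]\times[b]$; combining this rotation with an induction on $a+b$, whose inductive step restricts to a subposet isomorphic to a smaller rectangle via $\down$ or $\up$, should allow one to propagate the cyclic behaviour of the Stanley--Thomas word to the remaining labels. A more ambitious alternative is to interpret $\NAR$ as iteration of a noncommutative $Y$-system of type $A_{a-1}\times A_{b-1}$ and prove a noncommutative form of Zamolodchikov periodicity directly; this would likely require substantial new algebraic input from the cluster-algebra side, but would be the most conceptually satisfying route to the conjecture.
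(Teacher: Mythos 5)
The statement you were asked to prove is a \emph{conjecture} in the paper (Conjecture~\ref{conj:NOR-prod-chains}, attributed to Grinberg), and the paper explicitly does not prove it. The authors' only contribution regarding it is to show, via Theorem~\ref{thm:NAR-transfer}, that $\NOR$ and $\NAR$ are conjugate by the transfer map $\down$, so the conjecture is equivalent to the statement $\NAR^{a+b}=\mathrm{id}$ on $[a]\times[b]$; immediately afterward they state that they ``do not resolve this conjecture here.'' Your opening reduction --- conjugate $\NOR$ to $\NAR$, and get the lower bound $\mathrm{ord}(\NOR)\geq a+b$ by restricting to the commutative subfield $\ff$ where $\NOR$ specializes to $\BOR$ --- is exactly the paper's observation and is correct.

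Everything past that reduction is a research plan, not a proof, and you acknowledge as much. The upper bound $\NAR^{a+b}=\mathrm{id}$ is the entire content of the conjecture, and you do not establish it. The ``octahedron-type telescoping identity'' underlying the Grinberg--Roby proof of BOR-periodicity on rectangles is precisely what is not known to survive in a skew field: that proof clears denominators, collects like terms, and factors polynomial identities in ways that have no automatic noncommutative analogue, and ``test small cases, then extract the general local identity from the bookkeeping'' is a heuristic, not an argument. Your fallback via the noncommutative Stanley--Thomas word also does not close the gap: the paper's own introduction (\S\ref{sec:intro}) warns that at these higher levels the Stanley--Thomas map ``is no longer a bijection, so cannot be used to prove periodicity directly.'' Cyclic rotation of the word under $\NAR$ constrains certain products of labels, not the full labeling, so the proposed induction on $a+b$ that ``propagates the cyclic behaviour to the remaining labels'' would itself require the unproven local identity. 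The noncommutative Zamolodchikov route is even more speculative and, as you note, would need substantial new algebra. In short, your framing is faithful to the paper, but neither you nor the paper proves the conjecture; the upper bound remains open.
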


\subsection{Transfer maps in the noncommutative realm}

\begin{defn}
Let $f\in\bbs^P$.  We define {\bf complement} $\Theta$, {\bf down transfer} $\down$, {\bf up
transfer} $\up$, {\bf inverse down transfer} $\down^{-1}$, and {\bf inverse up transfer}
$\up^{-1}$ 
as follows.  
These specialize to Definition~\ref{def:bir-trans} when $\bbs$ is actually a field.  
\begin{align*}
    (\Theta f)(x) &= C \cdot \overline{f(x)}\\
    (\down f)(x) &= f(x) \cdot \overline{\sum\limits_{y\lessdot x}f(y)} \quad \big(\text{with } f\big(\widehat{0}\big)=1\big)\\
    (\up f)(x) &= 
    \overline{\sum\limits_{y\gtrdot x}f(y)} \cdot f(x) \quad \big(\text{with } f\big(\widehat{1}\big)=1\big)\\
    \left(\down^{-1}f\right)(x) &=
    \sum_{\widehat{0}\lessdot y_1\lessdot y_2 \lessdot \cdots \lessdot y_k=x} f(y_k)\cdots f(y_2)f(y_1)  
        =f(x) \cdot \sum\limits_{y\lessdot x}\left(\down^{-1}f\right)(y)\\
    \left(\up^{-1}f\right)(x) &=
    \sum_{x=y_1\lessdot y_2 \lessdot \cdots \lessdot y_k\lessdot \widehat{1}} f(y_k)\cdots f(y_2)f(y_1)
          =\sum\limits_{y\gtrdot x}\left(\up^{-1}f\right)(y) \cdot f(x)
\end{align*}
\end{defn}

\begin{thm}[Analogue of~{\cite[Thm.~6.2]{einpropp}}]\label{thm:NOR-transfer}
For any finite poset $P$, $\NOR=\Theta \circ \up^{-1} \circ \down$.
\end{thm}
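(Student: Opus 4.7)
The plan is to adapt the inductive argument of Einstein and Propp from the commutative case~\cite{einpropp} to the noncommutative realm, proceeding by downward induction along a linear extension while carefully tracking the order of multiplication. Fix a linear extension $(x_1, \ldots, x_n)$ of $P$, set $F := \down(f)$ and $G := \up^{-1}(F)$, and for $1 \leq k \leq n+1$ let $g_k := T_{x_k} T_{x_{k+1}} \cdots T_{x_n}(f)$, with the convention $g_{n+1} := f$, so that $g_1 = \NOR(f)$. I would prove by downward induction on $k$ the claim that $g_k(x_j) = C \cdot \overline{G(x_j)}$ for every $j \geq k$; specializing to $k = 1$ then yields $\NOR(f) = \Theta(G)$, which is exactly the theorem.

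The inductive step is where the work lies. Since $T_{x_k}$ alters only the label at $x_k$, the equalities $g_k(x_j) = g_{k+1}(x_j) = C\,\overline{G(x_j)}$ for $j > k$ follow immediately from the hypothesis. To compute $g_k(x_k)$, I would use two consequences of the linear-extension ordering: every $y \lessdot x_k$ in $P$ has the form $x_m$ with $m < k$, so $g_{k+1}(y) = f(y)$ (as $y$ has not yet been toggled); every $y \gtrdot x_k$ in $P$ has the form $x_m$ with $m > k$, so $g_{k+1}(y) = C\,\overline{G(y)}$ by the inductive hypothesis. With the boundary conventions $f(\widehat{0}) = 1$, $f(\widehat{1}) = C$, and the auxiliary setting $G(\widehat{1}) := 1$, the minimal and maximal cases are absorbed uniformly. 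Plugging into the noncommutative toggle formula and simplifying the parallel sum via $\sumpar_{y \gtrdot x_k}(C\,\overline{G(y)}) = C \cdot \overline{\sum_{y \gtrdot x_k} G(y)}$ (which uses centrality of $C$), one reduces the target identity to the recurrence $G(x_k) = \bigl(\sum_{y \gtrdot x_k} G(y)\bigr) f(x_k)\,\overline{\sum_{y \lessdot x_k} f(y)}$ after inverting via $\overline{AB} = \overline{B}\,\overline{A}$.

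The main obstacle will be bookkeeping the multiplication order in the skew field, since factors in $\bbs$ cannot be commuted freely. The entire argument rests squarely on the hypothesis that $C$ is central in $\bbs$: this is what permits $C$ to slide past $\sum_y G(y)$ when rewriting the parallel sum as an inverse of a sum, and past $\sum_{y \lessdot x_k} f(y)$ when matching the two sides of the claim. The base case $k = n$ (where $x_n$ is necessarily maximal in $P$, so $G(x_n) = F(x_n)$) is just the same calculation specialized to a trivial upper neighborhood. Once the noncommutative version is established, restricting to a commutative field recovers Theorem~\ref{thm:chi-R-commute-diagram} as an immediate corollary, as promised in the commentary following that statement.
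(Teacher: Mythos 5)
Your proof is correct and follows essentially the same route as the paper's: both proceed by downward induction from the top of the poset, both substitute the inductive hypothesis into the recursion $G(x)=\sum_{y\gtrdot x}G(y)\cdot f(x)\cdot\overline{\sum_{y\lessdot x}f(y)}$ arising from the definitions of $\up^{-1}$ and $\down$, and both hinge on the same observation that for $y\gtrdot x_k$ the labels are already those of $\NOR(f)$ while for $y\lessdot x_k$ they are still those of $f$ — you simply formalize the paper's closing remark (``this is exactly what we obtain at $x$ just before we toggle at $x$'') by explicitly tracking the partial products $g_k$. One small misstatement in your closing sentence: restricting \emph{this} theorem to a commutative field recovers Einstein--Propp's $\BOR=\Theta\circ\up^{-1}\circ\down$, whereas Theorem~\ref{thm:chi-R-commute-diagram} (about $\BAR$) is deferred to the noncommutative \emph{antichain} version, Theorem~\ref{thm:NAR-transfer}.
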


\begin{proof}
We will prove $(\Theta\up^{-1}\down f)(x)=(\NOR f)(x)$
for all $x\in P$ for which either side is defined (recall these are partial maps).  We induct on $P$ from top to bottom.
Let $x\in P$ and assume
every $y>x$ satisfies the induction hypothesis.

Then $$(\down f)(x) = f(x)
    \overline{\sum\limits_{y\lessdot x}f(y)}.$$
Now we apply $\up^{-1}$ to both sides.  Using the recursive
description for $\up^{-1}$, we obtain
\begin{align*}
    \left(\up^{-1}\down f\right)(x) &=
    \sum\limits_{y\gtrdot x}\left(\up^{-1}\down f\right)(y)\cdot (\down f)(x)\\
    &= \sum\limits_{y\gtrdot x}\left(\up^{-1}\down f\right)(y) \cdot f(x) \cdot \overline{\sum\limits_{y\lessdot x}f(y)}\\
    &= C\; \overline{\sumparlimitlower{y\gtrdot x} \left(\Theta \up^{-1}\down f\right)(y)} \cdot f(x) \cdot \overline{\sum\limits_{y\lessdot x}f(y)}.
\end{align*}
Next we apply $\Theta$ to both sides, which yields
\begin{align*}
    \left(\Theta \up^{-1}\down f\right)(x) &=
    C\cdot \overline{
    C\cdot \overline{\sumparlimitlower{y\gtrdot x}\left(\Theta \up^{-1}\down f\right)(y)} \cdot f(x) \cdot \overline{\sum\limits_{y\lessdot x}f(y)}}\\
    &=
    \left(\sum\limits_{y\lessdot x}f(y)\right) \cdot
    \overline{f(x)} \cdot \left( \sumparlimitlower{y\gtrdot x}\left(\Theta \up^{-1}\down f\right)(y) \right)\\
    &=
    \left(\sum\limits_{y\lessdot x}f(y)\right) \cdot
    \overline{f(x)} \cdot \left( \sumparlimitlower{y\gtrdot x}\left(\NOR f\right)(y) \right)
\end{align*}
by the induction hypothesis.
Note that the last expression equals $(\NOR f)(x)$
because it is exactly what we obtain
at $x$ just before we toggle at $x$ (where we have already toggled elements $y\gtrdot x$ but {\bf not} elements $y\lessdot x$).
\end{proof}

\subsection{Noncommutative Antichain Toggling and Rowmotion}

\begin{defn}
Let $v\in P$.  The \textbf{noncommutative antichain toggle} is the partial map $\tau_v:\bbs^P \dra \bbs^P$ defined as follows:\\\begin{footnotesize}
$\big(\tau_v(g)\big)(x) = 
\left\{\begin{array}{ll}
C \cdot \overline{
\sum\left\{\underbrace{g(y_{c-1})\cdots g(y_1)}_{\text{(indices decrease by 1)}}
\underbrace{g(y_{k})\cdots g(y_c)}_{\text{(indices decrease by 1)}} : \widehat{0}\lessdot y_1\lessdot y_2 \lessdot
\cdots \lessdot y_k\lessdot \widehat{1}, y_c=v
\right\}
} &\text{if $x=v$}\\\vspace{-4.5pt}\\
g(x) &\text{if $x\not=v$.}
\end{array}
\right.$\end{footnotesize}

The \textbf{noncommutative antichain elggot} is the partial map $\e_v:\bbs^P \dra \bbs^P$ defined as follows:\\\begin{footnotesize}
$\big(\e_v(g)\big)(x) = 
\left\{\begin{array}{ll}
C \cdot \overline{
\sum\left\{\underbrace{g(y_c)\cdots g(y_1)}_{\text{(indices decrease by 1)}}
\underbrace{g(y_{k})\cdots g(y_{c+1})}_{\text{(indices decrease by 1)}} : \widehat{0}\lessdot y_1\lessdot y_2
\lessdot
\cdots \lessdot y_k\lessdot \widehat{1}, y_c=v
\right\}
} &\text{if $x=v$}\\\vspace{-4.5pt}\\
g(x) &\text{if $x\not=v$.}
\end{array}
\right.$\end{footnotesize}
Let $\ntog_A(P)$ be the group generated by all noncommutative antichain toggles $\tau_v$ for $v\in P$, and let $\ntog_O(P)$ be the group generated by all noncommutative order toggles $T_v$.
\end{defn}

Commutativity of toggles and elggots is the same as in the (commutative) birational realm.

\begin{prop}
Let $u,v\in P$.  If $u \parallel v$, then
$\tau_u \tau_v= \tau_v \tau_u$, $ \e_u \e_v= \e_v \e_u$, $ \tau_u \e_v= \e_v \tau_u$.
\end{prop}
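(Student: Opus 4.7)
The plan is to adapt the combinatorial argument of Proposition~\ref{prop:basic-tau-prop}(2) directly to the noncommutative setting, since the obstruction to commutativity there was geometric (sharing a chain) rather than algebraic.

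First I would observe the key structural feature: each of the partial maps $\tau_v$ and $\e_v$ is ``localized'' at $v$ in two senses. On the output side, both leave $g(x)$ untouched for all $x \neq v$. On the input side, the formula computing the new value at $v$ is built entirely from $g(y_1), g(y_2), \ldots, g(y_k)$ where $(y_1, \ldots, y_k)$ ranges over saturated chains $\widehat{0} \lessdot y_1 \lessdot \cdots \lessdot y_k \lessdot \widehat{1}$ passing through $v$, i.e., maximal chains of $P$ through $v$ (with the endpoints $\widehat{0}, \widehat{1}$ attached). The difference between $\tau_v$ and $\e_v$ is merely the noncommutative ordering of the product $g(y_i)$'s along each such chain; the \emph{set} of labels consulted is identical.

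Next I would use the hypothesis $u \parallel v$: since $u$ and $v$ are incomparable, no chain of $P$ contains both of them, so in particular no maximal chain of $P$ through $v$ ever visits $u$, and symmetrically no maximal chain through $u$ ever visits $v$. Consequently, the formula for $\big(\tau_v(g)\big)(v)$ (respectively $\big(\e_v(g)\big)(v)$) is independent of $g(u)$, and the formula for $\big(\tau_u(g)\big)(u)$ (respectively $\big(\e_u(g)\big)(u)$) is independent of $g(v)$.

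With these two observations in hand, verifying each of the three identities is mechanical. For any labeling $g$ in the common domain, both $\tau_u\tau_v(g)$ and $\tau_v\tau_u(g)$ agree with $g$ away from $\{u,v\}$; at $v$ both equal $\big(\tau_v(g)\big)(v)$, since the intermediate application of $\tau_u$ only altered $g(u)$, which does not appear in the formula for $\big(\tau_v(\cdot)\big)(v)$; and symmetrically at $u$. The same bookkeeping, with $\tau$ replaced by $\e$ at the appropriate coordinate, yields $\e_u\e_v = \e_v\e_u$ and $\tau_u\e_v = \e_v\tau_u$. I expect no serious obstacle; the only technicality is, as in Remark~\ref{rem:partialMap}, to note that each identity is an equality of partial maps, holding wherever both sides are defined (in particular, wherever the labels used lie in the infinite subfield $\ff$ and denominators are nonzero), and that none of the steps introduces or removes domain restrictions asymmetrically between the two sides.
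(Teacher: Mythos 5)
Your argument is exactly the paper's: the paper simply cites the proof of Proposition~\ref{prop:basic-tau-prop}(2), which makes the same two observations you do (toggles and elggots only alter the label at their own vertex, and incomparability of $u$ and $v$ means no maximal chain visits both, so neither formula consults the other's label). You have just written out the bookkeeping more explicitly and flagged the partial-map caveat; no substantive difference.
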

\begin{proof}
Analogous to the proof of Proposition~\ref{prop:basic-tau-prop}(2). 
\end{proof}

\begin{ex}
Consider the poset $P=[2]\times[3]$ below, with the generic labeling $g\in\kk^P$ by 
$u,v,w,x,y,z\in\bbs$. 
\begin{center}
\begin{tikzpicture}[yscale=2/3]
\node at (0,0) {$(1,1)$};
\node at (-1,1) {$(2,1)$};
\node at (1,1) {$(1,2)$};
\node at (0,2) {$(2,2)$};
\node at (2,2) {$(1,3)$};
\node at (1,3) {$(2,3)$};
\draw[thick] (-0.35,0.35) -- (-0.65,0.65);
\draw[thick] (0.35,0.35) -- (0.65,0.65);
\draw[thick] (1.35,1.35) -- (1.65,1.65);
\draw[thick] (0.35,2.35) -- (0.65,2.65);
\draw[thick] (-0.35,1.65) -- (-0.65,1.35);
\draw[thick] (0.35,1.65) -- (0.65,1.35);
\draw[thick] (1.35,2.65) -- (1.65,2.35);
\end{tikzpicture}
\hskip 1in
\begin{tikzpicture}[yscale=2/3]
\node at (0,0) {$u$};
\node at (-1,1) {$v$};
\node at (1,1) {$w$};
\node at (0,2) {$x$};
\node at (2,2) {$y$};
\node at (1,3) {$z$};
\draw[thick] (-0.35,0.35) -- (-0.65,0.65);
\draw[thick] (0.35,0.35) -- (0.65,0.65);
\draw[thick] (1.35,1.35) -- (1.65,1.65);
\draw[thick] (0.35,2.35) -- (0.65,2.65);
\draw[thick] (-0.35,1.65) -- (-0.65,1.35);
\draw[thick] (0.35,1.65) -- (0.65,1.35);
\draw[thick] (1.35,2.65) -- (1.65,2.35);
\end{tikzpicture}
\end{center}
\begin{itemize}
\item If we apply the toggle $\tau_{(1,1)}$,
we would change the label at $(1,1)$ to
$$C\cdot \overline{zxvu+zxwu+zywu}=
C\cdot \overline{u}\cdot\overline{(xv+xw+yw)}
\cdot\overline{z}.$$
\item If instead we apply the toggle $\tau_{(2,1)}$,
we would change the label at $(2,1)$ to
$$C\cdot \overline{uzxv}
=C\cdot \overline{v}\cdot \overline{x}\cdot \overline{z}\cdot \overline{u}.$$
\item If instead we apply the toggle $\tau_{(1,2)}$,
we would change the label at $(1,2)$ to
$$C\cdot \overline{uzxw+uzyw}
=C\cdot \overline{w}\cdot \overline{(x+y)}\cdot \overline{z}\cdot \overline{u}.$$
\item If instead we apply the toggle $\tau_{(2,2)}$,
we would change the label at $(2,2)$ to
$$C\cdot \overline{vuzx+wuzx}
=C\cdot \overline{x}\cdot \overline{z}\cdot \overline{u}\cdot \overline{(v+w)}.$$
\end{itemize}
\end{ex}

\begin{defn}
Let $(x_1,x_2,\dots,x_n)$ be any linear extension of a finite poset $P$.  Then the partial map $\NAR=\tau_{x_n}\cdots \tau_{x_2} \tau_{x_1}$, i.e., toggling at each element of $P$
from bottom to top, is called \textbf{noncommutative antichain rowmotion (NAR-motion)}.
\end{defn}

\begin{example}\label{ex:NAR[2]x[3]}
On the poset $P=[2]\times[3]$, $\NAR$ has order 5.
In Figure~\ref{fig:NAR[2]x[3]}, we show an orbit beginning with a generic labeling.
\end{example}

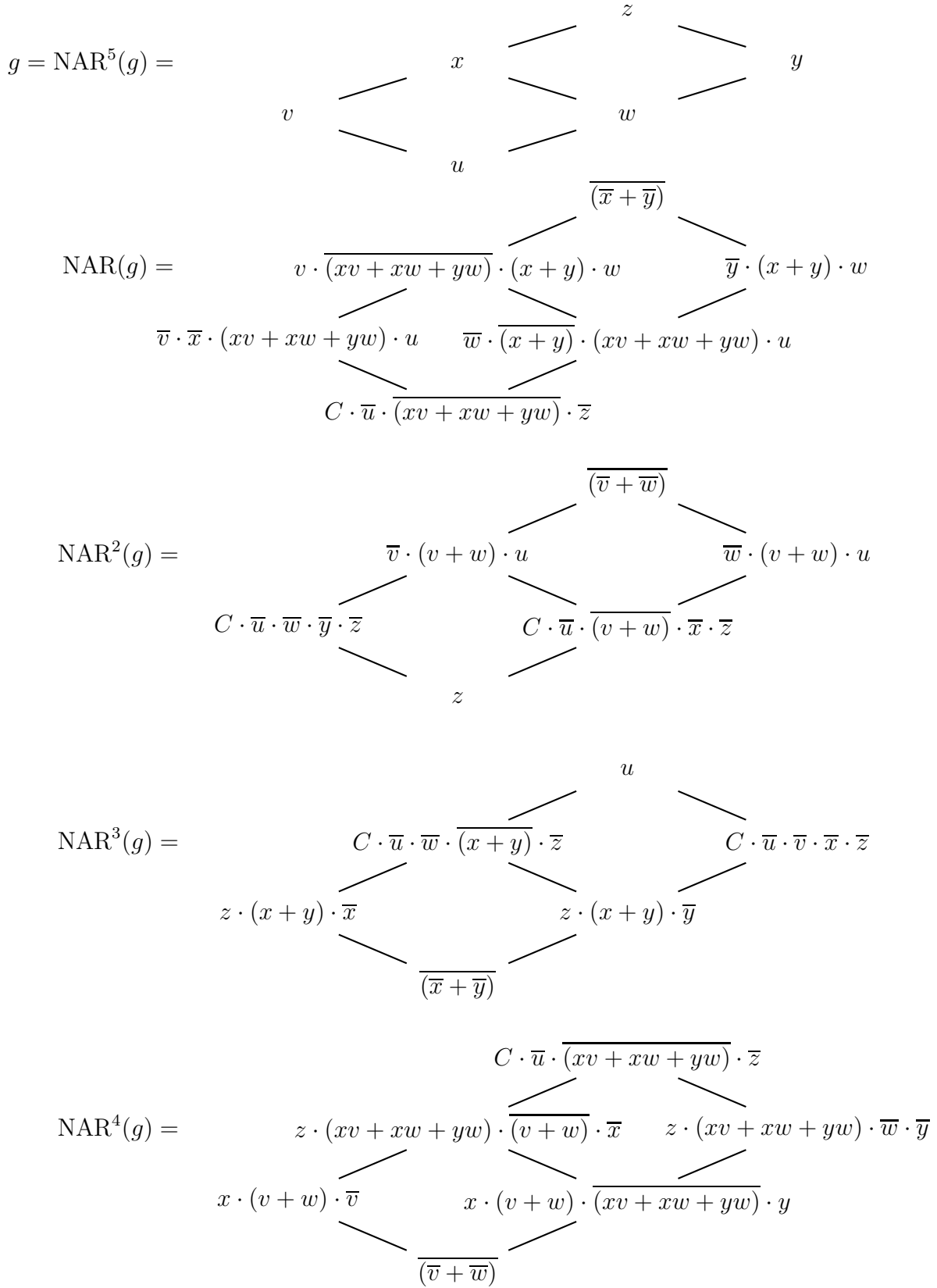
\begin{figure}
    \centering
\begin{tikzpicture}[xscale=26/9,yscale=11/9]
\begin{scope}[shift={(0,-0.6)},yscale=8/11]
\node at (-2.16,2) { $g=\NAR^5(g)=$};
\draw[thick] (-0.3, 1.7) -- (-0.7, 1.3);
\draw[thick] (0.3, 1.7) -- (0.7, 1.3);
\draw[thick] (-0.7, 0.7) -- (-0.3, 0.3);
\draw[thick] (0.7, 0.7) -- (0.3, 0.3);
\draw[thick] (0.7, 2.7) -- (0.3, 2.3);
\draw[thick] (1.3, 2.7) -- (1.7, 2.3);
\draw[thick] (1.7, 1.7) -- (1.3, 1.3);
\node at (1,3) {$  z  $};
\node at (0,2) {$  x  $};
\node at (2,2) {$  y  $};
\node at (-1,1) {$  v  $};
\node at (1,1) {$  w  $};
\node at (0,0) {$  u  $};
\end{scope}
\begin{scope}[shift={(0,-4)}]
\node at (-2,2) { $\NAR(g)=$};
\draw[thick] (-0.3, 1.7) -- (-0.7, 1.3);
\draw[thick] (0.3, 1.7) -- (0.7, 1.3);
\draw[thick] (-0.7, 0.7) -- (-0.3, 0.3);
\draw[thick] (0.7, 0.7) -- (0.3, 0.3);
\draw[thick] (0.7, 2.7) -- (0.3, 2.3);
\draw[thick] (1.3, 2.7) -- (1.7, 2.3);
\draw[thick] (1.7, 1.7) -- (1.3, 1.3);
\node at (1,3) {$  \overline{\left(\overline{x}+\overline{y}\right)}  $};
\node at (0,2) {$  v \cdot \overline{(xv+xw+yw)} \cdot (x+y) \cdot w  $};
\node at (2,2) {$  \overline{y} \cdot (x+y) \cdot w  $};
\node at (-1,1) {$  \overline{v} \cdot \overline{x} \cdot (xv+xw+yw) \cdot u   $};
\node at (1,1) {$  \overline{w} \cdot \overline{(x+y)} \cdot (xv+xw+yw) \cdot u  $};
\node at (0,0) {$  C \cdot \overline{u} \cdot \overline{(xv+xw+yw)} \cdot \overline{z}  $};
\end{scope}
\begin{scope}[shift={(0,-8)}]
\node at (-2,2) { $\NAR^2(g)=$};
\draw[thick] (-0.3, 1.7) -- (-0.7, 1.3);
\draw[thick] (0.3, 1.7) -- (0.7, 1.3);
\draw[thick] (-0.7, 0.7) -- (-0.3, 0.3);
\draw[thick] (0.7, 0.7) -- (0.3, 0.3);
\draw[thick] (0.7, 2.7) -- (0.3, 2.3);
\draw[thick] (1.3, 2.7) -- (1.7, 2.3);
\draw[thick] (1.7, 1.7) -- (1.3, 1.3);
\node at (1,3) {$  \overline{\left(\overline{v}+\overline{w}\right)}  $};
\node at (0,2) {$  \overline{v} \cdot (v+w) \cdot u  $};
\node at (2,2) {$  \overline{w} \cdot (v+w) \cdot u  $};
\node at (-1,1) {$  C\cdot \overline{u}\cdot\overline{w}
\cdot\overline{y}\cdot\overline{z}  $};
\node at (1,1) {$  C\cdot \overline{u}\cdot\overline{(v+w)}
\cdot\overline{x}\cdot\overline{z}  $};
\node at (0,0) {$  z  $};
\end{scope}
\begin{scope}[shift={(0,-12)}]
\node at (-2,2) { $\NAR^3(g)=$};
\draw[thick] (-0.3, 1.7) -- (-0.7, 1.3);
\draw[thick] (0.3, 1.7) -- (0.7, 1.3);
\draw[thick] (-0.7, 0.7) -- (-0.3, 0.3);
\draw[thick] (0.7, 0.7) -- (0.3, 0.3);
\draw[thick] (0.7, 2.7) -- (0.3, 2.3);
\draw[thick] (1.3, 2.7) -- (1.7, 2.3);
\draw[thick] (1.7, 1.7) -- (1.3, 1.3);
\node at (1,3) {$  u  $};
\node at (0,2) {$  C\cdot \overline{u} \cdot \overline{w} \cdot \overline{(x+y)} \cdot \overline{z}  $};
\node at (2,2) {$  C\cdot \overline{u} \cdot \overline{v} \cdot \overline{x} \cdot \overline{z}  $};
\node at (-1,1) {$  z \cdot (x+y) \cdot \overline{x}  $};
\node at (1,1) {$ z \cdot (x+y) \cdot \overline{y}  $};
\node at (0,0) {$\overline{\left(\overline{x}+\overline{y}\right)} $};
\end{scope}
\begin{scope}[shift={(0,-16)}]
\node at (-2,2) { $\NAR^4(g)=$};
\draw[thick] (-0.3, 1.7) -- (-0.7, 1.3);
\draw[thick] (0.3, 1.7) -- (0.7, 1.3);
\draw[thick] (-0.7, 0.7) -- (-0.3, 0.3);
\draw[thick] (0.7, 0.7) -- (0.3, 0.3);
\draw[thick] (0.7, 2.7) -- (0.3, 2.3);
\draw[thick] (1.3, 2.7) -- (1.7, 2.3);
\draw[thick] (1.7, 1.7) -- (1.3, 1.3);
\node at (1,3) {$  C\cdot \overline{u} \cdot \overline{(xv+xw+yw)} \cdot \overline{z}$};
\node at (0,2) {$  z \cdot (xv+xw+yw) \cdot \overline{(v+w)}\cdot\overline{x}  $};
\node at (2,2) {$  z \cdot (xv+xw+yw) \cdot \overline{w}\cdot\overline{y}  $};
\node at (-1,1) {$  x \cdot (v+w) \cdot \overline{v} $};
\node at (1,1) {$  x \cdot (v+w) \cdot \overline{(xv+xw+yw)} \cdot y  $};
\node at (0,0) {$ \overline{\left(\overline{v}+\overline{w}\right)}  $};
\end{scope}
\end{tikzpicture}
\caption{An orbit of $\NAR$ starting on a generic labeling $g\in\bbs^P$, for $P=[2]\times[3]$.  We observe that the order of $\NAR$, like $\BAR$, is 5 on this poset.}
\label{fig:NAR[2]x[3]}
\end{figure}
We now define specific elements of $\ntog_A(P)$ that mimic the action of order toggles.
\begin{defn}
For $v\in P$, let $T_v^* =\e_{v_1}\e_{v_2}\cdots \e_{v_k} \tau_v \tau_{v_k}\cdots \tau_{v_2}\tau_{v_1}\in\ntog_A(P)$
where $v_1,\dots,v_k$ are the elements of $P$ covered by $v$.  (In the case that $v$ is a minimal element of $P$, $k=0$ and $T_v^*=\tau_v$.)
Let $E_v^*=\left(T_v^*\right)^{-1} = \e_{v_1}\e_{v_2}\cdots \e_{v_k} \e_v \tau_{v_k}\cdots \tau_{v_2}\tau_{v_1}.$
\end{defn}

The following lemma (whose proof is straightforward from the definitions) will be used in the proof of Theorem~\ref{thm:T-star-NC}.

\begin{lem}\label{lem:meteor gorge}
Let $g\in\bbs^P$ and $v\in P$.
Then
\begin{align*}
    (\tau_v g)(v)&=C\cdot \overline{(\down^{-1}g)(v)\cdot (\up^{-1}g)(v)} \cdot g(v)\\
    &=C\cdot \overline{(\up^{-1}g)(v)} \cdot \overline{(\down^{-1}g)(v)}\cdot g(v),\\
     (\e_v g)(v)&=C\cdot g(v)\cdot \overline{(\down^{-1}g)(v)\cdot (\up^{-1}g)(v)}\\
    &=C\cdot g(v) \cdot \overline{(\up^{-1}g)(v)} \cdot \overline{(\down^{-1}g)(v)}.   
\end{align*}
\end{lem}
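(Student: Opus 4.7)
The plan is to unfold each definition and exploit the fact that a maximal chain $\widehat{0}\lessdot y_1\lessdot\cdots\lessdot y_k\lessdot \widehat{1}$ through $v=y_c$ splits into an independent ``bottom'' segment $\widehat{0}\lessdot y_1\lessdot\cdots\lessdot y_c=v$ and ``top'' segment $v=y_c\lessdot\cdots\lessdot y_k\lessdot \widehat{1}$. Because the factor $g(y_{c-1})\cdots g(y_1)$ depends only on the bottom segment and is written to the left of the factor $g(y_k)\cdots g(y_c)$, which depends only on the top segment, the distributive law in $\bbs$ yields
\[
\sum_{y_c=v} g(y_{c-1})\cdots g(y_1)\,g(y_k)\cdots g(y_c)
= \Bigl(\sum_{\text{bottom}} g(y_{c-1})\cdots g(y_1)\Bigr)\Bigl(\sum_{\text{top}} g(y_k)\cdots g(y_c)\Bigr).
\]
The right-hand factor is exactly $(\up^{-1}g)(v)$ from its definition.

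For the left-hand factor, I would use the recursive characterization
$(\down^{-1}g)(v)=g(v)\sum_{u\lessdot v}(\down^{-1}g)(u)$, which holds uniformly in $\widehat{P}$ (including minimal $v\in P$, where we interpret the sum to range over $u=\widehat{0}$ with $(\down^{-1}g)(\widehat{0})=1$, so both sides collapse to $g(v)$). Re-grouping the bottom sum by its next-to-top element $y_{c-1}=u$ and recognizing the inner sum as $(\down^{-1}g)(u)$, I obtain
$\sum_{\text{bottom}} g(y_{c-1})\cdots g(y_1) = \sum_{u\lessdot v}(\down^{-1}g)(u) = \overline{g(v)}\,(\down^{-1}g)(v).$
Combining, the full sum equals $\overline{g(v)}\,(\down^{-1}g)(v)\,(\up^{-1}g)(v)$, and applying $C\cdot\overline{(\;\cdot\;)}$ together with the reversal rule $\overline{AB}=\overline{B}\cdot\overline{A}$ yields both advertised forms of $(\tau_v g)(v)$.

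The elggot case is parallel: the same chain decomposition gives
\[
\sum_{y_c=v} g(y_c)\cdots g(y_1)\,g(y_k)\cdots g(y_{c+1})
= (\down^{-1}g)(v)\Bigl(\sum_{\text{top}} g(y_k)\cdots g(y_{c+1})\Bigr),
\]
after recognizing the left factor as $(\down^{-1}g)(v)$ directly. For the right factor I would invoke the analogous recursion $(\up^{-1}g)(v)=\sum_{w\gtrdot v}(\up^{-1}g)(w)\cdot g(v)$, which lets me factor $\overline{g(v)}$ out \emph{on the right}, giving $(\up^{-1}g)(v)\,\overline{g(v)}$. Inverting and multiplying by $C$ then produces both stated forms for $(\e_v g)(v)$.

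The only real obstacle is bookkeeping: because $\bbs$ is noncommutative, one must consistently factor $\overline{g(v)}$ to the left in the $\tau_v$ computation but to the right in the $\e_v$ computation, mirroring the asymmetry in how $g(v)$ appears in the toggle versus elggot definitions, and one must apply $\overline{AB}=\overline{B}\cdot\overline{A}$ (not any putative commutative identity) when passing between the two displayed forms. Edge cases where $v$ is minimal or maximal in $P$ are absorbed into the convention that the appropriate boundary recursion uses $\widehat{0}$ or $\widehat{1}$ with $(\down^{-1}g)(\widehat{0})=(\up^{-1}g)(\widehat{1})=1$; once this is fixed, no separate argument is needed.
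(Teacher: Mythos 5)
The paper does not include a proof of this lemma; it is stated with the remark that the proof ``is straightforward from the definitions,'' so there is no official argument to compare against. Your proof correctly fills in that omitted argument and is the natural one: the factorization of the chain sum through $v$ into independent bottom and top blocks, the identification of the top block with $(\up^{-1}g)(v)$ and of $\sum_{u\lessdot v}(\down^{-1}g)(u)$ with $\overline{g(v)}(\down^{-1}g)(v)$ via the recursion, and the careful one-sided factoring of $\overline{g(v)}$ (left for $\tau_v$, right for $\e_v$) followed by the skew-field reversal rule $\overline{AB}=\overline{B}\cdot\overline{A}$, all line up exactly with the definitions; the remark on boundary conventions for minimal/maximal $v$ via $\widehat{0},\widehat{1}$ is also correct.
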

The following result uses the transfer maps to explicitly mimic the action of order toggles using the products of antichain toggles defined above and similarly for elggots.
\begin{thm}[Analogue of Theorem~\ref{thm:T-star}]\label{thm:T-star-NC}
Let $v\in P$.  Then the following diagrams commute on the domains in which the maps are defined.
\begin{center}
\begin{tikzpicture}[yscale=.6]
\node at (0,1.8) {$\bbs^P$};
\node at (0,0) {$\bbs^P$};
\node at (0,-1.8) {$\bbs^P$};
\node at (3.25,1.8) {$\bbs^P$};
\node at (3.25,0) {$\bbs^P$};
\node at (3.25,-1.8) {$\bbs^P$};
\draw[semithick, dashed, ->] (0,1.3) -- (0,0.5);
\node[left] at (0,0.9) {$\up^{-1}$};
\draw[semithick, dashed, ->] (0.7,-1.8) -- (2.5,-1.8);
\node[below] at (1.5,-1.8) {$T_v$};
\draw[semithick, dashed, ->] (0.7,1.8) -- (2.5,1.8);
\node[above] at (1.5,1.8) {$T_v^*$};
\draw[semithick, dashed, ->] (3.25,1.3) -- (3.25,0.5);
\node[right] at (3.25,0.9) {$\up^{-1}$};
\draw[semithick, dashed, ->] (0,-0.5) -- (0,-1.3);
\node[left] at (0,-0.9) {$\Theta$};
\draw[semithick, dashed, ->] (3.25,-0.5) -- (3.25,-1.3);
\node[right] at (3.25,-0.9) {$\Theta$};
\begin{scope}[shift=({7,0)})]
\node at (0,1.8) {$\bbs^P$};
\node at (0,0) {$\bbs^P$};
\node at (0,-1.8) {$\bbs^P$};
\node at (3.25,1.8) {$\bbs^P$};
\node at (3.25,0) {$\bbs^P$};
\node at (3.25,-1.8) {$\bbs^P$};
\draw[semithick, dashed, ->] (0,1.3) -- (0,0.5);
\node[left] at (0,0.9) {$\up^{-1}$};
\draw[semithick, dashed, ->] (0.7,-1.8) -- (2.5,-1.8);
\node[below] at (1.5,-1.8) {$E_v$};
\draw[semithick, dashed, ->] (0.7,1.8) -- (2.5,1.8);
\node[above] at (1.5,1.8) {$E_v^*$};
\draw[semithick, dashed, ->] (3.25,1.3) -- (3.25,0.5);
\node[right] at (3.25,0.9) {$\up^{-1}$};
\draw[semithick, dashed, ->] (0,-0.5) -- (0,-1.3);
\node[left] at (0,-0.9) {$\Theta$};
\draw[semithick, dashed, ->] (3.25,-0.5) -- (3.25,-1.3);
\node[right] at (3.25,-0.9) {$\Theta$};
\end{scope}
\end{tikzpicture}
\end{center}
\end{thm}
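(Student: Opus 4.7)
The plan is to prove only the first diagram; the second (elggot) diagram follows by inversion, since the paper observes $E_v = T_v^{-1}$ and defines $E_v^* := (T_v^*)^{-1}$, so inverting the relation $T_v^* = \up\,\Theta\,T_v\,\Theta\,\up^{-1}$ (a rearrangement of the first square, using $\Theta^2 = \mathrm{id}$) produces $E_v^* = \up\,\Theta\,E_v\,\Theta\,\up^{-1}$, which is the second square.

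For the first diagram, fix $g\in\bbs^P$, set $h := \up^{-1}g$, and prove the equivalent identity $T_v^*\,g = \up\bigl(\Theta\,T_v\,\Theta\,h\bigr)$. A direct computation using the formula for $T_v$, reciprocity (Proposition~\ref{prop:reciprocity}), and centrality of $C$ shows that $\tilde h := \Theta\,T_v\,\Theta\,h$ coincides with $h$ except at $v$, where
\[
\tilde h(v) = \Bigl(\sum_{u\gtrdot v} h(u)\Bigr)\,\overline{h(v)}\,\Bigl(\sumparlimitlower{u\lessdot v} h(u)\Bigr).
\]
I then verify $(T_v^*\,g)(x) = (\up\tilde h)(x)$ pointwise. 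For $x\notin\{v,v_1,\ldots,v_k\}$ both sides equal $g(x)$: no toggle or elggot in the product defining $T_v^*$ touches the label at $x$, and $(\up\tilde h)(x)$ depends only on the values of $\tilde h$ at $x$ and its covers, which agree with the corresponding values of $h$ when $x$ is outside this set.

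For $x = v$, set $g_k := \tau_{v_k}\cdots\tau_{v_1}\,g$. Lemma~\ref{lem:meteor gorge} applied with the identity $\sum_{u\lessdot v}(\down^{-1}g)(u) = \overline{g(v)}\,(\down^{-1}g)(v)$ yields $(\down^{-1}g_k)(v) = C\,g(v)\,\overline{\sumparlimitlower{i}(\up^{-1}g)(v_i)}$ and $(\up^{-1}g_k)(v) = (\up^{-1}g)(v)$, and a second application of the Lemma gives $(\tau_v\,g_k)(v) = \overline{(\up^{-1}g)(v)}\,\sumparlimitlower{i}(\up^{-1}g)(v_i)$, a quantity unaffected by the final elggots $\e_{v_i}$. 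On the other side, $(\up\tilde h)(v) = \overline{\sum_{u\gtrdot v}h(u)}\cdot\tilde h(v)$; substituting $\sum_{u\gtrdot v}h(u) = h(v)\,\overline{g(v)}$ and the explicit formula for $\tilde h(v)$ and then simplifying via $\overline{h(v)}\,h(v) = 1$ collapses this to the same expression.

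The main obstacle is the case $x = v_i$. Writing $g_k' := \tau_v\,g_k$, Lemma~\ref{lem:meteor gorge} gives $(T_v^*\,g)(v_i) = (\e_{v_i}\,g_k')(v_i) = C\,g_k'(v_i)\,\overline{(\up^{-1}g_k')(v_i)}\,\overline{(\down^{-1}g_k')(v_i)}$. The factor $(\down^{-1}g_k')(v_i) = C\,\overline{(\up^{-1}g)(v_i)}$ is a one-line computation, but $(\up^{-1}g_k')(v_i)$ must be split into a ``chains through $v$'' contribution (carrying the new label $g_k'(v)$) and a ``chains avoiding $v$'' contribution; using $\sum_{u\gtrdot v_i,\,u\neq v}(\up^{-1}g)(u) = (\up^{-1}g)(v_i)\,\overline{g(v_i)} - (\up^{-1}g)(v)$ together with the explicit value of $g_k'(v)$ reduces it to a closed form. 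Meanwhile $(\up\tilde h)(v_i) = \overline{\sum_{u\gtrdot v_i}\tilde h(u)}\cdot h(v_i)$ expands via the explicit $\tilde h(v)$, and after systematic use of reciprocity to interchange $\sum$ and $\sumpar$, together with the centrality of $C$, the two expressions are shown to coincide. The conceptual content is carried by Lemma~\ref{lem:meteor gorge} and reciprocity; the challenge is careful bookkeeping of the non-commuting factors.
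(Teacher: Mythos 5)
Your proposal is correct, and it takes a genuinely different route from the paper's. The paper proves $\Theta\up^{-1}T_v^* = T_v\Theta\up^{-1}$ directly and pays for the ``global'' nature of $\Theta\up^{-1}$ (whose value at $x$ depends on all labels above $x$) with a separate downward induction on $x<v$, $x\notin\{v_1,\dots,v_k\}$, plus a separate base case when $v$ is minimal. You instead rearrange to $T_v^*g = \up\bigl(\Theta T_v\Theta\,\up^{-1}g\bigr)$, under which the modified labeling $\tilde h := \Theta T_v\Theta h$ differs from $h$ only at $v$, and the ``local'' nature of $\up$ (value at $x$ depends only on $x$ and its covers) makes agreement at every $x\notin\{v,v_1,\dots,v_k\}$ immediate, no induction required; the minimal-$v$ case also becomes a degenerate instance of the general one. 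I verified your $\tilde h(v)$ formula and your values at $v$ and $v_i$; the cleanest way to finish the $v_i$ case, which you sketch but do not fully display, is to observe directly that $\sum_{u\gtrdot v_i}\tilde h(u) = \sum_{u\gtrdot v_i}(\up^{-1}g_k')(u)$ (both terms differ from $\sum_{u\gtrdot v_i}(\up^{-1}g)(u)$ only in the $u=v$ summand, and $\tilde h(v) = (\up^{-1}g_k')(v) = h(v)\overline{g(v)}\,\overline{h(v)}\sumparlimitlower{j}h(v_j)$), after which both $(T_v^*g)(v_i)$ and $(\up\tilde h)(v_i)$ collapse to $\overline{S}\cdot h(v_i)$ with $S$ that common sum; this replaces the ``careful bookkeeping of the non-commuting factors'' you advertise with a single cancellation and avoids carrying the subtraction $h(v_i)\overline{g(v_i)}-h(v)$ explicitly through the computation. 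Your approach buys a shorter, more uniform argument; the paper's buys a computation phrased entirely in terms of $f=\Theta\up^{-1}g$, which is the form most directly reused in the proof of Theorem~\ref{thm:tau-star-NC}.
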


\begin{proof}
The right commutative diagram clearly follows from the left, so we will only prove the left one.

Let $g\in\bbs^P$.  We must show that
$\Theta \up^{-1} (T_v^*g) = T_v(\Theta\up^{-1} g)$.

Suppose $v$ is a minimal element of $P$.  Then $T_v^*=\tau_v$.  Note from the definitions that 
$$(\Theta\up^{-1} g)(x) =
C\cdot \overline{\sum\left\{ g(y_k)\cdots g(y_2)g(y_1):
    x=y_1\lessdot y_2 \lessdot \cdots \lessdot y_k\lessdot \widehat{1} \right\}}    .$$
Thus, $(\Theta\up^{-1} g)(x)$ only depends on the labels $g(y)$ for any $y\geq x$.
Let $x\not=v$.  Since $\tau_v$ only affects the label at $v$, and $x\not\leq v$ (by minimality of $v$), it follows that 
$$
\big(\Theta\up^{-1} (\tau_v g)\big)(x)=
(\Theta\up^{-1} g)(x)=
\big(T_v(\Theta\up^{-1} g)\big)(x).$$
Now we must confirm that
$\big(\Theta\up^{-1} (\tau_v g)\big)(v)=\big(T_v(\Theta\up^{-1} g)\big)(v).$
We have
\begin{align*}
&\phantom{=l}\big(\Theta\up^{-1} (\tau_v g)\big)(v)\\
&= C\cdot \overline{\big(\up^{-1} (\tau_v g)\big)(v)}\\
&= C\cdot \overline{\sum\limits_{y\gtrdot v}\big(\up^{-1}(\tau_v g)\big)(y)\cdot(\tau_v g)(v)}\\
&= C\cdot \overline{\sum\limits_{y\gtrdot v}\big(\up^{-1}g\big)(y)\cdot(\tau_v g)(v)}\\
&= C\cdot \overline{\sum\limits_{y\gtrdot v}\big(\up^{-1}g\big)(y)\cdot C\cdot\overline{(\up^{-1}g)(v)}}
&\text{(apply definitions of $\tau_v$ and $\up^{-1}$ for minimal $v$)}
\\
&=C\cdot \overline{(\up^{-1}g)(v)}\left(\overline{C\cdot \sum\limits_{y\gtrdot v}\big(\up^{-1}g\big)(y)}\right)\\
&=(\Theta\up^{-1}g)(v)\left(\sumparlimitlower{y\gtrdot v}\big(\Theta\up^{-1}g\big)(y)\right)\\
&=\big(T_v(\Theta\up^{-1} g)\big)(v)
&\text{(since $v$ is minimal so there is no $y\lessdot v$ in $P$).}
\end{align*}

Now assume $v$ is not minimal in $P$.  Let $v_1,\dots,v_k$ be the elements that $v$ covers.  Let
\begin{align*}
&&g' &= \tau_v\tau_{v_k}\cdots \tau_{v_1} g,
&g'' &= \e_{v_1}\cdots \e_{v_k} g' = T_v^* g,\\
f &= \Theta\up^{-1}g,
&f' &= \Theta\up^{-1}g',
&f'' &= \Theta\up^{-1}g''.\\
\end{align*}

The goal is to show that $f''=T_v f$.
Note that $g$, $g'$, and $g''$ can only possibly differ in the labels of $v,v_1,v_2,\cdots,v_k$.  From the definitions of $\Theta$ and $\up^{-1}$, we note that
$T_v f$ and $f''$ can only possibly differ in the labels of elements $\leq v$.

We begin by proving $f''(v)=(T_v f)(v)$.  Since $v_1,\cdots,v_k$ are pairwise incomparable
(so each chain can only contain at most one of them), for $1\leq j\leq k$,
\begin{align*}
\left(\tau_{v_1}\cdots \tau_{v_k} g\right)(v_j)
&=
C\cdot \overline{(\down^{-1}g)(v_j)\cdot(\up^{-1}g)(v_j)}\cdot g(v_j)
&\text{(from Lemma~\ref{lem:meteor gorge})}\\
&=
C\cdot \overline{\overline{g(v_j)}\cdot(\down^{-1}g)(v_j)\cdot(\up^{-1}g)(v_j)}\\
&=C\cdot \overline{\sum\limits_{y\lessdot v_j}(\down^{-1}g)(v_j)\cdot(\up^{-1}g)(v_j)}\\
&=C\cdot \overline{(\up^{-1}g)(v_j)}\cdot\overline{\sum\limits_{y\lessdot v_j}(\down^{-1}g)(v_j)}\\
&=f(v_j)\cdot\overline{\sum\limits_{y\lessdot v_j}(\down^{-1}g)(v_j)}.
\end{align*}
We restate the above fact
\begin{equation}\label{eq:the lost city}
   \left(\tau_{v_1}\cdots \tau_{v_k} g\right)(v_j)
   = f(v_j)\cdot\overline{\sum\limits_{y\lessdot v_j}(\down^{-1}g)(v_j)}
\end{equation}
as an equation we will reference later.

Then to get $g'(v)$, we apply $\tau_v$ to $\left( \tau_{v_1} \cdots \tau_{v_k}g \right)(v)$.  Lemma~\ref{lem:meteor gorge} gives
\begin{align*}
g''(v)=g'(v)&=
C\cdot \overline{\big(\down^{-1}( \tau_{v_1} \cdots \tau_{v_k}g)\big)(v)\cdot\big(\up^{-1}( \tau_{v_1} \cdots \tau_{v_k}g)\big)(v)}\cdot ( \tau_{v_1} \cdots \tau_{v_k}g)(v)
\\
&= C\cdot
\overline{\overline{( \tau_{v_1} \cdots \tau_{v_k}g)(v)}\cdot\big(\down^{-1}( \tau_{v_1} \cdots \tau_{v_k}g)\big)(v)\cdot\big(\up^{-1}( \tau_{v_1} \cdots \tau_{v_k}g)\big)(v)}
\\
&= C\cdot
\overline{\left(\sum\limits_{v_j\lessdot v}\big(\down^{-1}( \tau_{v_1} \cdots \tau_{v_k}g)\big)(v)\right)\cdot\big(\up^{-1}g\big)(v)}
\\
&=C\cdot \overline{\big(\up^{-1}g\big)(v)}\cdot
\overline{\sum\limits_{v_j\lessdot v}\big(\down^{-1}( \tau_{v_1} \cdots \tau_{v_k}g)\big)(v)}
\\
&=
f(v)\cdot
\overline{\sum\limits_{v_j\lessdot v}\big(\down^{-1}( \tau_{v_1} \cdots \tau_{v_k}g)\big)(v)}
\\
&=
f(v)\cdot
\overline{\sum\limits_{v_j\lessdot v}\left(\big(\tau_{v_1} \cdots \tau_{v_k}g\big)(v_j)
\sum\limits_{y\leq v_j}\big(\down^{-1}( \tau_{v_1} \cdots \tau_{v_k}g)\big)(y)\right)}
\\
&=
f(v)\cdot
\overline{\sum\limits_{v_j\lessdot v}\left(\big(\tau_{v_1} \cdots \tau_{v_k}g\big)(v_j)
\sum\limits_{y\leq v_j}\big(\down^{-1}g\big)(y)\right)}
\\
&=
f(v)\cdot
\overline{\sum\limits_{v_j\lessdot v}\underbrace{f(v_j)}_
\text{from Eq.~(\ref{eq:the lost city})}}.
\end{align*}

Then using the recursive descripion of $\up^{-1}$,
\begin{align*}
f''(v) &= C\cdot \overline{(\up^{-1}g'')(v)}
=
C\cdot \overline{\sum\limits_{y\gtrdot v}(\up^{-1}g'')(v)\cdot g''(v)}
=
C\cdot \overline{\sum\limits_{y\gtrdot v}(\up^{-1}g)(y)\cdot g''(v)}\\
&=
\overline{g''(v)}\cdot C\cdot  \overline{\sum\limits_{y\gtrdot v}(\up^{-1}g)(y)}
=
\overline{g''(v)}\cdot \sumparlimitlower{y\gtrdot v}(\Theta\up^{-1}g)(y)
=
\overline{f(v)\cdot
\overline{\sum\limits_{v_j\lessdot v}f(v_j)}}\cdot \sumparlimitlower{y\gtrdot v}f(y)\\
&=\sum\limits_{v_j\lessdot v}f(v_j) \cdot\overline{f(v)}\cdot
\sumparlimitlower{y\gtrdot v}f(y)
=
(T_v f)(v).
\end{align*}

We noted earlier in the proof that
$(T_v f)(x)=f''(x)=f(x)$ for all $x>v$.  Now we have proven
$(T_v f)(v)=f''(v)$.  What remains to be shown is that
$(T_v f)(x)=f''(x)=f(x)$ for all $x<v$, which we will now prove
using downward induction on $x$.  We begin with the base case $x\lessdot v$.  For $1\leq j\leq k$,
\begin{align*}
g''(v_j) &= (\e_{v_j}g')(v_j)\\
&=
C\cdot g'(v_j) \cdot \overline{(\down^{-1}g')(v_j)\cdot(\up^{-1}g')(v_j)}
& \text{(from Lemma~\ref{lem:meteor gorge})}
\\
&= C\cdot
\overline{(\down^{-1}g')(v_j)\cdot(\up^{-1}g')(v_j) \cdot
\overline{g'(v_j)}}
\\
\end{align*}\begin{align*}
&= C\cdot
\overline{\left(g'(v_j)\sum\limits_{y\lessdot v_j}\left(\down^{-1}g'\right)(y)\right)\cdot
\left(\sum\limits_{y\gtrdot v_j}\left(\up^{-1}g'\right)(y)\cdot g'(v_j)\right) \cdot
\overline{g'(v_j)}}
\\
&= C\cdot
\overline{g'(v_j)\cdot\sum\limits_{y\lessdot v_j}\left(\down^{-1}g'\right)(y)\cdot
\sum\limits_{y\gtrdot v_j}\left(\up^{-1}g'\right)(y)}
\\
&=
\overline{
f(v_j)\cdot\overline{\sum\limits_{y\lessdot v_j}(\down^{-1}g)(v_j)}
\cdot\sum\limits_{y\lessdot v_j}\left(\down^{-1}g'\right)(y)
\cdot\sum\limits_{y\gtrdot v_j}\left(\up^{-1}g'\right)(y)}
&  \text{(from Eq.~(\ref{eq:the lost city}))}
\\
&=
\overline{
f(v_j)\sum\limits_{y\gtrdot v_j}\left(\up^{-1}g'\right)(y)}
\\
&=
\overline{
f(v_j)\sum\limits_{y\gtrdot v_j}\left(\up^{-1}g''\right)(y)}
\end{align*}\vspace{-0.1 in}\\
where the last equality is because $\up^{-1}g', \up^{-1}g''$ are the same for $y>v_j$.
Using the above fact in the fourth equality below,
\begin{align*}
f''(v_j)
&=C\cdot \overline{(\up^{-1}g'')(v_j)}
=\overline{\sum\limits_{y\gtrdot v_j}(\up^{-1}g'')(y)\cdot g''(v_j)}
= C\cdot 
\overline{g''(v_j)}\cdot \overline{\sum\limits_{y\gtrdot v_j}(\up^{-1}g'')(y)}\\
&=f(v_j)\sum\limits_{y\gtrdot v_j}\left(\up^{-1}g''\right)(y)\cdot \overline{\sum\limits_{y\gtrdot v_j}(\up^{-1}g'')(y)}
= f(v_j)=(T_v f)(v_j).
\end{align*}
Now let $x<v$ and $x\not\in\{v_1,\dots,v_k\}$.
Assume that $(T_v f)(y)=f(y)=f''(y)$ for every $y$ covering $x$ (which cannot include $y=v$ because $x\not\in\{v_1,\dots,v_k\}$).
Also since $x\not\in\{v,v_1,\dots,v_k\}$, recall that $g(x)=g''(x)$.  So

\begin{align*}
f''(x) &=
C\cdot \overline{(\up^{-1}g'')(x)}
=
C\cdot \overline{\sum\limits_{y\gtrdot x}(\up^{-1}g'')(y)
\cdot g''(x)}
=
\overline{\sum\limits_{y\gtrdot x}(\Theta f'')(y)
\cdot g''(x)}\\
&=
\overline{\sum\limits_{y\gtrdot x}(\Theta f)(y)
\cdot g(x)}
=
\overline{\sum\limits_{y\gtrdot x}(\up^{-1}g)(y)
\cdot g(x)}
=
\overline{(\up^{-1}g)(x)}
=f(x)
=(T_v f)(x).
\end{align*}
\end{proof}

We continue the group-theoretic approach of~\cite{antichain-toggling} to prove
$\NAR=\down\circ\Theta\circ\up^{-1}$.
The proofs here are similar to those
in~\cite{antichain-toggling}, but modified as
toggles are no longer involutions.  The next two definitions and theorem allow us to mimic antichain toggles by order toggles.  

\begin{defn}
For $S\subseteq P$  let $\eta_S:=T_{x_1}T_{x_2}\cdots T_{x_k}$
where $(x_1,x_2,\dots,x_k)$ is a linear extension of the subposet 
$\{x\in P\;|\;x<y \text{ for some }y\in S\}$.  (In the special case where every element of $S$ is minimal in $P$,
$\eta_S$ is the identity.)  For $v\in P$, we write $\eta_v:=\eta_{\{v\}}$. 
\end{defn}

\begin{defn}
For $v\in P$, define $\tau_v^*\in\ntog_O(P)$ as $\tau_v^* := \eta_v T_v \eta_v^{-1}$.  Let $\e_v^*=\left(\tau_v^*\right)^{-1} = \eta_v E_v \eta_v^{-1}$.
\end{defn}

\begin{thm}[Analogue of~\ref{thm:tau-star} and~{\cite[Thm.~2.19]{antichain-toggling}}]\label{thm:tau-star-NC}
Let $v\in P$.  Then the following diagrams commute on the domains in which the maps are defined.
\begin{center}
\begin{tikzpicture}[yscale=.6]
\node at (0,1.8) {$\bbs^P$};
\node at (0,0) {$\bbs^P$};
\node at (0,-1.8) {$\bbs^P$};
\node at (3.25,1.8) {$\bbs^P$};
\node at (3.25,0) {$\bbs^P$};
\node at (3.25,-1.8) {$\bbs^P$};
\draw[semithick, dashed, ->] (0,1.3) -- (0,0.5);
\node[left] at (0,0.9) {$\up^{-1}$};
\draw[semithick, dashed, ->] (0.7,-1.8) -- (2.5,-1.8);
\node[below] at (1.5,-1.8) {$\tau_v^*$};
\draw[semithick, dashed, ->] (0.7,1.8) -- (2.5,1.8);
\node[above] at (1.5,1.8) {$\tau_v$};
\draw[semithick, dashed, ->] (3.25,1.3) -- (3.25,0.5);
\node[right] at (3.25,0.9) {$\up^{-1}$};
\draw[semithick, dashed, ->] (0,-0.5) -- (0,-1.3);
\node[left] at (0,-0.9) {$\Theta$};
\draw[semithick, dashed, ->] (3.25,-0.5) -- (3.25,-1.3);
\node[right] at (3.25,-0.9) {$\Theta$};
\begin{scope}[shift=({7,0)})]
\node at (0,1.8) {$\bbs^P$};
\node at (0,0) {$\bbs^P$};
\node at (0,-1.8) {$\bbs^P$};
\node at (3.25,1.8) {$\bbs^P$};
\node at (3.25,0) {$\bbs^P$};
\node at (3.25,-1.8) {$\bbs^P$};
\draw[semithick, dashed, ->] (0,1.3) -- (0,0.5);
\node[left] at (0,0.9) {$\up^{-1}$};
\draw[semithick, dashed, ->] (0.7,-1.8) -- (2.5,-1.8);
\node[below] at (1.5,-1.8) {$\e_v^*$};
\draw[semithick, dashed, ->] (0.7,1.8) -- (2.5,1.8);
\node[above] at (1.5,1.8) {$\e_v$};
\draw[semithick, dashed, ->] (3.25,1.3) -- (3.25,0.5);
\node[right] at (3.25,0.9) {$\up^{-1}$};
\draw[semithick, dashed, ->] (0,-0.5) -- (0,-1.3);
\node[left] at (0,-0.9) {$\Theta$};
\draw[semithick, dashed, ->] (3.25,-0.5) -- (3.25,-1.3);
\node[right] at (3.25,-0.9) {$\Theta$};
\end{scope}
\end{tikzpicture}
\end{center}
\end{thm}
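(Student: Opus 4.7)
The plan is to establish the first (main) commutative diagram by induction on the depth $d(v) := |\{x \in P : x < v\}|$, and then deduce the second diagram from the first by inversion: inverting the horizontal arrows of the first diagram exchanges $\tau_v \leftrightarrow \e_v$ and $\tau_v^* \leftrightarrow \e_v^*$ (using $(\tau_v^*)^{-1} = \e_v^*$ by definition). Writing $\Psi := \Theta \circ \up^{-1}$, the goal becomes the identity $\Psi \tau_v = \tau_v^* \Psi$ of partial maps on $\bbs^P$.

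For the base case $d(v) = 0$, $v$ is minimal in $P$, so $\eta_v$ is the empty product and $\tau_v^* = T_v$; simultaneously no element is covered by $v$, so the definition of $T_v^*$ in Theorem~\ref{thm:T-star-NC} degenerates to $T_v^* = \tau_v$. Hence Theorem~\ref{thm:T-star-NC}, applied at $v$, already gives $\Psi \tau_v = \Psi T_v^* = T_v \Psi = \tau_v^* \Psi$.

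For the inductive step, I would fix $v$ with $d(v) > 0$ and assume the result for every $w$ with $d(w) < d(v)$. Let $v_1, \ldots, v_\ell$ be the elements covered by $v$; each satisfies $d(v_i) < d(v)$, and the inductive hypothesis for $\tau_{v_i}$ (combined with the equivalence noted above) yields $\Psi \e_{v_i} = \e_{v_i}^* \Psi$ for each $i$. Set $A := \e_{v_1} \cdots \e_{v_\ell}$, well-defined in $\ntog_A(P)$ because the $v_i$ form an antichain and their elggots commute pairwise; iterating the inductive hypothesis gives $\Psi A = A^* \Psi$ with $A^* := \e_{v_1}^* \cdots \e_{v_\ell}^* \in \ntog_O(P)$, and similarly $\Psi A^{-1} = (A^*)^{-1} \Psi$. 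Applying Theorem~\ref{thm:T-star-NC} at $v$ and unfolding $T_v^* = A \tau_v A^{-1}$ gives $\Psi A \tau_v A^{-1} = T_v \Psi$; substituting the preceding relations then yields
\begin{equation*}
\Psi \tau_v \;=\; (A^*)^{-1} T_v A^* \,\Psi.
\end{equation*}
Thus the proof reduces to showing the algebraic identity $(A^*)^{-1} T_v A^* = \eta_v T_v \eta_v^{-1} = \tau_v^*$ in $\ntog_O(P)$, equivalently that $A^* \eta_v$ commutes with $T_v$.

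The hard part will be verifying this final identity. To attack it, I would choose the linear extension underlying $\eta_v$ so that $v_1, \ldots, v_\ell$ appear last---legitimate because they are precisely the maximal elements of $\{x : x < v\}$---yielding a factorization $\eta_v = \eta'_v \cdot T_{v_1} \cdots T_{v_\ell}$ in which $\eta'_v$ toggles only at elements strictly below some $v_i$. Combined with the explicit expansion $A^* = \prod_{i=1}^{\ell} \eta_{v_i} E_{v_i} \eta_{v_i}^{-1}$ and repeated application of the commutation rule that two noncommutative order toggles commute unless one covers the other, the expression $A^* \eta_v$ should telescope to a product whose cumulative effect commutes with $T_v$. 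The bookkeeping parallels the corresponding piecewise-linear argument in~\cite[Thm.~2.19]{antichain-toggling}, with the failure of involutivity in the noncommutative setting handled by systematically distinguishing $T_u$ from its inverse $E_u$ when performing cancellations.
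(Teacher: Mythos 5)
Your proposal follows essentially the same route as the paper: the same well-founded induction with minimal elements as base case, the same use of Theorem~\ref{thm:T-star-NC} together with the inductive hypothesis to push $\Psi = \Theta\circ\up^{-1}$ through the conjugation $T_v^* = \e_{v_1}\cdots\e_{v_\ell}\,\tau_v\,\tau_{v_\ell}\cdots\tau_{v_1}$, and the same reduction to a purely group-theoretic identity in $\ntog_O(P)$. The ``hard part'' you correctly isolate but leave as a sketch (showing $A^*\eta_v$ commutes with $T_v$ via a telescoping/reordering argument on toggles and elggots) is precisely the content of the paper's Lemma~\ref{lem:tau-star-NC}, which is stated and proved separately and is then the engine that closes the inductive step; carrying out that bookkeeping would complete your argument.
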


To prove Theorem~\ref{thm:tau-star-NC}, we first need a lemma.

\begin{lemma}[Analogue of~{\cite[Lemma~2.21]{antichain-toggling}}]\label{lem:tau-star-NC}
Let $v_1,\dots,v_k$ be pairwise incomparable elements of $P$.  Then for $1\leq i\leq k$,
$$\tau_{v_1}^*\tau_{v_2}^*\dots \tau_{v_i}^* = \eta_{\{v_1,\dots,v_i\}} T_{v_1}T_{v_2}\dots T_{v_i} \eta_{\{v_1,\dots,v_i\}}^{-1}.$$
\end{lemma}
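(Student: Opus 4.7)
The plan is to proceed by induction on $i$. The case $i=1$ is immediate from the definition $\tau_{v_1}^* = \eta_{v_1}T_{v_1}\eta_{v_1}^{-1}$ together with $\eta_{\{v_1\}}=\eta_{v_1}$. For the inductive step, the hypothesis gives
\[
\tau_{v_1}^*\cdots\tau_{v_i}^* \;=\;\eta_{\{v_1,\ldots,v_{i-1}\}}\,T_{v_1}\cdots T_{v_{i-1}}\,\eta_{\{v_1,\ldots,v_{i-1}\}}^{-1}\cdot\eta_{v_i}\,T_{v_i}\,\eta_{v_i}^{-1},
\]
and the task is to rewrite this as $\eta_{\{v_1,\ldots,v_i\}}\,T_{v_1}\cdots T_{v_i}\,\eta_{\{v_1,\ldots,v_i\}}^{-1}$ by shuttling commuting factors.

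To organize the shuffling, write $A=\bigcup_{j<i}\{x\in P:x<v_j\}$ and $B=\{x\in P:x<v_i\}$; these are order ideals of $P$, and $\eta_{\{v_1,\ldots,v_{i-1}\}}$, $\eta_{v_i}$, $\eta_{\{v_1,\ldots,v_i\}}$ are the well-defined linear-extension products of order toggles over $A$, $B$, and $A\cup B$ respectively. Let $D=A\cap B$. Because $D$ is an order ideal of $P$ contained in each of $A$, $B$, $A\cup B$, one may choose compatible linear extensions that list $D$ first, giving factorizations $\eta_A=\eta_D\,\eta_{A\setminus D}$, $\eta_B=\eta_D\,\eta_{B\setminus D}$, and $\eta_{A\cup B}=\eta_D\,\eta_{A\setminus D}\,\eta_{B\setminus D}$. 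The last factorization depends on a key combinatorial claim: every element of $A\setminus D$ is incomparable to every element of $B\setminus D$, so the corresponding toggles commute. Indeed, if $x\in A\setminus D$ satisfies $x<v_j$ for some $j<i$ while $x\not<v_i$, and $y\in B\setminus D$ satisfies $y<v_i$ with $y\not<v_{j'}$ for all $j'<i$, then either $x\le y$ or $y\le x$ propagates to a comparison among the $v_j$'s, contradicting pairwise incomparability. By the same sort of argument, each toggle appearing in $\eta_{A\setminus D}$ commutes with $T_{v_i}$, and each toggle appearing in $\eta_{B\setminus D}$ commutes with every $T_{v_j}$ for $j<i$; finally $T_{v_1},\ldots,T_{v_i}$ commute pairwise since no two of them are in a cover relation.

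With these commutations recorded, the inductive step becomes a routine rearrangement: substitute the factorizations, cancel the $\eta_D^{-1}\eta_D$ appearing between the two conjugations, then slide $\eta_{B\setminus D}$ leftward past $\eta_{A\setminus D}^{-1}$ and $T_{v_1}\cdots T_{v_{i-1}}$, and correspondingly slide $\eta_{A\setminus D}^{-1}$ rightward past $T_{v_i}$ and $\eta_{B\setminus D}^{-1}$. What remains collects exactly to $\eta_{A\cup B}\,T_{v_1}\cdots T_{v_i}\,\eta_{A\cup B}^{-1}$, completing the induction. The main obstacle is not any individual algebraic step but the care required in verifying the three commutation facts; once those are in place, the calculation is a faithful noncommutative transcription of the corresponding argument in \cite[Lem.~2.21]{antichain-toggling}, since no step in the group-theoretic rearrangement uses commutativity of the underlying scalars.
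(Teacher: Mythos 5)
Your proof is correct and follows essentially the same route as the paper's: the paper partitions the downward set of $\{v_1,\dots,v_i\}$ into the three pieces you call $D$, $A\setminus D$, and $B\setminus D$ (there denoted $\{x_j\}$, $\{y_j\}$, $\{z_j\}$), establishes exactly the same incomparability facts, and performs the same toggle rearrangement. One small inaccuracy worth flagging: for the incomparability of $A\setminus D$ and $B\setminus D$, the contradiction from $x\le y$ or $y\le x$ does not come from pairwise incomparability of the $v_j$'s but directly from the membership conditions ($x\le y < v_i$ would force $x < v_i$, contradicting $x\not< v_i$; likewise $y\le x < v_j$ contradicts $y\not< v_{j'}$); pairwise incomparability of the $v$'s is what you need instead for the commutations of $\eta_{A\setminus D}$ with $T_{v_i}$ and of $\eta_{B\setminus D}$ with $T_{v_j}$, $j<i$.
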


\begin{proof}
This proof is similar to that of~\cite[Lemma 2.21]{antichain-toggling}.

This claim is true by definition for $i=1$ and we proceed inductively.  Suppose it is true for some given $i\leq k-1$.  Let
\begin{itemize}
\item $x_1,\dots,x_a$ be the elements that are both less than $v_{i+1}$ and less than at least one of $v_1,\dots,v_i$,
\item $y_1,\dots,y_b$ be the elements that are less than at least one of $v_1,\dots,v_i$ but not less than $v_{i+1}$,
\item $z_1,\dots,z_c$ be the elements that are less than $v_{i+1}$ but not less than any of $v_1,\dots,v_i$.
\end{itemize}
Clearly, it is possible for one or more of the sets $\{x_1,\dots,x_a\}$, $\{y_1,\dots,y_b\}$, and $\{z_1,\dots,z_c\}$ to be empty.  For example, if $b=0$, then the product $T_{y_1}\cdots T_{y_b}$ is just the identity.

Note than none of $y_1,\dots,y_b$ are less than any of $x_1,\dots,x_a$ because any element less than some $x_j$ is automatically less than $v_{i+1}$.  By similar reasoning, none of $z_1,\dots,z_c$ are less than any of $x_1,\dots,x_a$ either.
Also any pair $y_m,z_n$ are incomparable, because $y_m \leq z_n$ would imply $y_m<v_{i+1}$, while $z_n \leq y_m$ would imply $z_n$ is less than some $v_j$.
By transitivity and the pairwise incomparability of $v_1,\dots,v_{i+1}$, each $y_m$ is incomparable with $v_{i+1}$, and each $z_m$ is incomparable with any of $v_1,\dots,v_i$.

We will pick the indices so that $(x_1,\dots, x_a)$, $(y_1,\dots,y_b)$, and $(z_1,\dots,z_c)$ are linear extensions of the subposets $\{x_1,\dots,x_a\}$, $\{y_1,\dots,y_b\}$, and $\{z_1,\dots,z_c\}$, respectively.  Then we have the following
\begin{itemize}
\item $(x_1,\dots,x_a,y_1,\dots,y_b)$ is a linear extension of $\big\{p\in P\;|\; p<q,q\in\{v_1,\dots, v_i\}\big\}$.
\begin{itemize}
\item This yields $\eta_{\{v_1,\dots,v_i\}}=T_{x_1}\cdots T_{x_a}T_{y_1}\cdots T_{y_b}$.
\end{itemize}
\item $(x_1,\dots,x_a,z_1,\dots,z_c)$ is a linear extension of $\{p\in P \;|\; p<e_{i+1}\}$.
\begin{itemize}
\item This yields $\eta_{v_{i+1}}=T_{x_1}\cdots T_{x_a}T_{z_1}\cdots T_{z_c}$.
\end{itemize}
\item $(x_1,\dots,x_a,y_1,\dots,y_b,z_1,\dots,z_c)$ and 
$(x_1,\dots,x_a,z_1,\dots,z_c,y_1,\dots,y_b)$ are both linear extensions of $\big\{p\in P \;|\; p<q,q\in\{v_1,\dots, v_{i+1}\}\big\}$.
\begin{itemize}
\item This yields $\eta_{\{v_1,\dots,v_{i+1}\}}=T_{x_1}\cdots T_{x_a}T_{y_1}\cdots T_{y_b}T_{z_1}\cdots T_{z_c}$.
\end{itemize}
\end{itemize}

Using the induction hypothesis,
\begin{align*}
&  \tau_{v_1}^*\dots \tau_{v_i}^* \tau_{v_{i+1}}^* \\
&=
\eta_{\{v_1,\dots,v_i\}} T_{v_1}\mydots T_{v_i} \eta_{\{v_1,\dots,v_i\}}^{-1}\eta_{v_{i+1}} T_{v_{i+1}} \eta_{v_{i+1}}^{-1}\\
&=
T_{x_1}\mydots T_{x_a} T_{y_1} \mydots T_{y_b} T_{v_1} \mydots T_{v_i} E_{y_b} \mydots E_{y_1} E_{x_a}\mydots E_{x_1} T_{x_1}\mydots T_{x_a} T_{z_1} \mydots T_{z_c} T_{v_{i+1}} E_{z_c} \mydots E_{z_1} E_{x_a}\mydots E_{x_1}\\
&=
T_{x_1}\mydots T_{x_a} T_{y_1} \mydots T_{y_b} T_{v_1} \mydots T_{v_i} E_{y_b} \mydots E_{y_1} T_{z_1} \mydots T_{z_c} T_{v_{i+1}} E_{z_c} \mydots E_{z_1} E_{x_a}\mydots E_{x_1}\\
&=
T_{x_1}\mydots T_{x_a} T_{y_1} \mydots T_{y_b} T_{v_1} \mydots T_{v_i} T_{z_1} \mydots T_{z_c} T_{v_{i+1}} E_{z_c} \mydots E_{z_1} E_{y_b} \mydots E_{y_1} E_{x_a}\mydots E_{x_1}\\
&=
T_{x_1}\mydots T_{x_a} T_{y_1} \mydots T_{y_b} T_{z_1} \mydots T_{z_c} T_{v_1} \mydots T_{v_i} T_{v_{i+1}} E_{z_c} \mydots E_{z_1} E_{y_b} \mydots E_{y_1} E_{x_a}\mydots E_{x_1}\\
&=
\eta_{\{v_1,\dots,v_{i+1}\}} T_{v_1}\mydots T_{v_i}
T_{v_{i+1}}\eta_{\{v_1,\dots,v_{i+1}\}}^{-1}
\end{align*}
where each commutation above is between toggles/elggots for pairwise incomparable elements.
\end{proof}

We are now ready to prove Theorem~\ref{thm:tau-star-NC}.

\begin{proof}[Proof of Theorem~\ref{thm:tau-star-NC}]
The right commutative diagram clearly follows from the left, so we will simply prove the left.

We use induction on $v$.  If $v$ is a minimal element of $P$, then $\tau_v^*=T_v$ and $T_v^*=\tau_v$, so the diagram commutes by Theorem~\ref{thm:T-star-NC}.

Now suppose $v$ is not minimal.  Let $v_1,\dots,v_k$ be the elements of $P$ covered by $v$, and suppose that the theorem is true for every $v_i$.  That is, for every $A\in\bbs^P$ with $I=\Theta\up^{-1}A$, we have $\Theta\up^{-1}\big(\tau_{v_i}(A)\big)=\tau_{v_i}^*(I)$.

Then $$\Theta\up^{-1}\big(\tau_{v_1}\tau_{v_2}\dots \tau_{v_k}(A)\big)=\tau_{v_1}^*\tau_{v_2}^*\dots \tau_{v_k}^*(I)
= \eta_{\{v_1,\dots,v_k\}} T_{v_1}T_{v_2}\dots T_{v_k} \eta_{\{v_1,\dots,v_k\}}^{-1}(I)$$
and 
$$\Theta\up^{-1}\big(\e_{v_1}\e_{v_2}\dots \e_{v_k}(A)\big)=\e_{v_1}^*\e_{v_2}^*\dots \e_{v_k}^*(I)
= \eta_{\{v_1,\dots,v_k\}} E_{v_1}E_{v_2}\dots E_{v_k} \eta_{\{v_1,\dots,v_k\}}^{-1}(I)$$
by Lemma~\ref{lem:tau-star-NC}.

Throughout this proof, we let $A\in\bbs^P$ and
$I=\Theta\up^{-1}A\in\bbs^P$.  Think $A$ for ``antichain'' and $I$ as referring to ``the order ideal generated by $A$''
if we were in the combinatorial realm.

From the definition of $T_v^*$, it follows that $\tau_{v_1}\tau_{v_2}\cdots \tau_{v_k} T_v^* \e_{v_k}\cdots\e_{v_2} \e_{v_1} 
=\tau_v$.
Then
\begin{align*}
  \Theta\up^{-1}\big(\tau_v(A)\big)&=  
\Theta\up^{-1}\big(\tau_{v_1}\tau_{v_2}\cdots \tau_{v_k} T_v^* \e_{v_k}\cdots\e_{v_2} \e_{v_1}  (A)\big)\\
&=
\eta_{\{v_1,\dots,v_k\}} T_{v_1}T_{v_2}\cdots T_{v_k} \eta_{\{v_1,\dots,v_k\}}^{-1} T_v \eta_{\{v_1,\dots,v_k\}} E_{v_k}\cdots E_{v_2}E_{v_1} \eta_{\{v_1,\dots,v_k\}}^{-1}(I)
\end{align*}
by Theorem~\ref{thm:T-star-NC} (for $T_v^*$) and the induction hypothesis (for $\tau_{v_1}\tau_{v_2}\cdots \tau_{v_k}$ and
$\e_{v_k}\cdots\e_{v_2} \e_{v_1}$).  Thus, it suffices to show that
\begin{equation}\label{eq:tau-star-NC}
\eta_{\{v_1,\dots,v_k\}} T_{v_1}T_{v_2}\cdots T_{v_k} \eta_{\{v_1,\dots,v_k\}}^{-1} T_v \eta_{\{v_1,\dots,v_k\}} E_{v_k}\cdots E_{v_2}E_{v_1} \eta_{\{v_1,\dots,v_k\}}^{-1}=\tau_v^*.
\end{equation}
The toggles in the product $\eta_{\{v_1,\dots,v_k\}}$ correspond to elements strictly less than $v_1,\dots,v_k$; none of these cover nor are covered by $v$.  Thus
we can commute $T_v$ with $\eta_{\{v_1,\dots,v_k\}}$ on the left side of~(\ref{eq:tau-star-NC}) and then cancel $\eta_{\{v_1,\dots,v_k\}}^{-1} \eta_{\{v_1,\dots,v_k\}}$.
Thus the left side of~(\ref{eq:tau-star-NC}) is
$$\eta_{\{v_1,\dots,v_k\}} T_{v_1}T_{v_2}\cdots T_{v_k} T_v E_{v_k} \cdots E_{v_2}E_{v_1} \eta_{\{v_1,\dots,v_k\}}^{-1}.$$
Note that $$\{x\in P \;|\; x<v\}=\{x\in P \;|\; x<y \text{ for some
}y\in\{v_1,\dots,v_k\}\}\cup \{v_1,\dots,v_k\}$$ where the union is disjoint and that
$v_1,\dots,v_k$ are maximal elements of this set.  Thus for any linear extension
$(x_1,\dots,x_n)$ of $\big\{x\in P \;|\; x<y \text{ for some }y\in\{v_1,\dots,v_k\}\big\}$, a linear extension of 
$\{x\in P \;|\; x<v\}$ is
$(x_1,\dots,x_n,v_1,\dots,v_k)$.
So
$\eta_{\{v_1,\dots,v_k\}} T_{v_1}T_{v_2}\cdots T_{v_k}
=\eta_v$
and
$E_{v_k}\cdots E_{v_2}E_{v_1}\eta_{\{v_1,\dots,v_k\}}^{-1}
=\eta_v^{-1}$
which means the left side of~(\ref{eq:tau-star-NC}) is $\eta_v T_v \eta_v^{-1}= \tau_v^*$,
the same as the right side.
\end{proof}

The following is a corollary of Theorems~\ref{thm:T-star-NC} and~\ref{thm:tau-star-NC}.

\begin{cor}\label{cor:iso-NC}
There is an isomorphism from $\ntog_A(P)$ to $\ntog_O(P)$ given by $\tau_v\mapsto \tau_v^*$, with inverse given by $T_v\mapsto T_v^*$.
\end{cor}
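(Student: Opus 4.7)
The plan is to realize the desired isomorphism as conjugation by the partial map $\Psi := \Theta\circ\up^{-1}$. Theorems~\ref{thm:T-star-NC} and~\ref{thm:tau-star-NC} supply the two conjugation identities: for every $v\in P$,
$$\Psi\circ T_v^* = T_v\circ\Psi \qquad\text{and}\qquad \Psi\circ\tau_v = \tau_v^*\circ\Psi.$$
Since $\Psi$ admits a partial inverse (roughly, $\up\circ\Theta$, taking into account the central constant $C$), conjugation by $\Psi$ is a group isomorphism on the monoid of partial birational automorphisms of $\bbs^P$, on the generic locus where all maps are defined. I would first use the second identity to conclude that the assignment $\tau_v\mapsto\tau_v^*$ extends to a well-defined homomorphism $\phi:\ntog_A(P)\to\ntog_O(P)$: any equality $W_1 = W_2$ of words in the $\tau_v$'s becomes, after post-composition with $\Psi$ and iterated application of Theorem~\ref{thm:tau-star-NC}, the identity $W_1^*\circ\Psi = W_2^*\circ\Psi$ of partial maps, and cancelling $\Psi$ on the right yields $W_1^* = W_2^*$. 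The analogous argument via Theorem~\ref{thm:T-star-NC} shows that $T_v\mapsto T_v^*$ extends to a homomorphism $\psi:\ntog_O(P)\to\ntog_A(P)$.

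The second step is to verify that $\phi$ and $\psi$ are mutual inverses, which reduces to checking on generators. For $\tau_v\in\ntog_A(P)$, by definition $\phi(\tau_v)=\tau_v^*=\eta_v T_v \eta_v^{-1}$, a product of order toggles and elggots; since $\psi$ is a homomorphism, $\psi(\phi(\tau_v))$ is the same product with each $T_w$ replaced by $T_w^*$ and each $E_w$ by $E_w^*$. Iterating Theorem~\ref{thm:T-star-NC}, this product satisfies $\Psi\circ\psi(\phi(\tau_v)) = \tau_v^*\circ\Psi$, and then Theorem~\ref{thm:tau-star-NC} identifies the right-hand side with $\Psi\circ\tau_v$; cancelling $\Psi$ on the left yields $\psi(\phi(\tau_v))=\tau_v$. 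A symmetric computation using the two theorems in the reverse order gives $\phi(\psi(T_v))=T_v$, completing the proof that $\phi$ and $\psi$ are mutually inverse isomorphisms.

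The main subtlety will be the ``cancellation of $\Psi$'' step in a noncommutative, partial-map setting. I would address this as in Remark~\ref{rem:partialMap}: all identities in play are equalities of noncommutative rational expressions evaluated at a generic labeling, and such an identity holds as soon as it holds after restricting the labels to the infinite subfield $\ff\subseteq\bbs$, where $\Psi$ is an honest bijection on a Zariski-dense open set of labelings and can be cancelled in the standard way. Alternatively, one could bypass $\Psi$ entirely by invoking Lemma~\ref{lem:tau-star-NC} to write each $\tau_v^*$ explicitly as a product of order toggles and verifying $\phi\circ\psi=\mathrm{id}$ and $\psi\circ\phi=\mathrm{id}$ directly on generators; but the conjugation approach is cleaner and transparently explains why the pairing $(\tau_v\mapsto\tau_v^*,\ T_v\mapsto T_v^*)$ is the correct one.
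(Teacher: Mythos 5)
Your argument is correct and takes the same approach the paper intends: Theorems~\ref{thm:T-star-NC} and~\ref{thm:tau-star-NC} say precisely that conjugation by $\Psi = \Theta\circ\up^{-1}$ intertwines $\tau_v \leftrightarrow \tau_v^*$ and $T_v^* \leftrightarrow T_v$, and the corollary follows at once since conjugation by a (generically invertible) partial map restricts to a group isomorphism on the two toggle groups. The paper states this as an immediate corollary without spelling out the details; you have filled in the routine bookkeeping (well-definedness, mutual inverses on generators, and the partial-map caveats from Remark~\ref{rem:partialMap}) correctly.
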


\begin{thm}\label{thm:NAR-transfer}
For any finite poset $P$,  $\NAR=\down\circ \Theta \circ \up^{-1}$.
\end{thm}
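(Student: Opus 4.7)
The plan is to reduce the desired identity to Theorem~\ref{thm:NOR-transfer} by transporting through the isomorphism $\ntog_A(P) \to \ntog_O(P)$ from Corollary~\ref{cor:iso-NC}, using the intertwining in Theorem~\ref{thm:tau-star-NC}. First, I would fix a linear extension $(x_1,\dots,x_n)$ of $P$ and apply Theorem~\ref{thm:tau-star-NC} once for each toggle in the composition defining $\NAR$, telescoping to get
\[
\Theta \circ \up^{-1} \circ \NAR \;=\; \Theta \circ \up^{-1} \circ \tau_{x_n}\cdots \tau_{x_1} \;=\; (\tau_{x_n}^* \cdots \tau_{x_1}^*) \circ \Theta \circ \up^{-1}.
\]

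The crux is the algebraic identification $\tau_{x_n}^* \cdots \tau_{x_1}^* = T_{x_1} T_{x_2} \cdots T_{x_n} = \NOR$, which I would prove by induction on $|P|$. Since both sides are independent of the linear extension (swaps of adjacent incomparable elements yield commuting factors on each side), one may choose a convenient extension: take $x_n$ to be a maximal element, let $(x_1,\dots,x_k)$ be a linear extension of $\{y \in P : y < x_n\}$, and let $(x_{k+1},\dots,x_{n-1})$ be a linear extension of $\{z \in P : z \parallel x_n\}$. This yields a valid linear extension of $P$ because if $z \parallel x_n$ and $y < x_n$, then $z < y$ would force $z < x_n$, a contradiction. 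For $i < n$, the element $\tau_{x_i}^*$ is unchanged whether computed in $P$ or $P \setminus \{x_n\}$ (its definition involves only elements $< x_i$, none of which can be $x_n$ by maximality). Thus the inductive hypothesis on $P \setminus \{x_n\}$ gives $\tau_{x_{n-1}}^* \cdots \tau_{x_1}^* = T_{x_1} \cdots T_{x_{n-1}}$ inside $\ntog_O(P)$. With our chosen linear extension, $T_{x_1} \cdots T_{x_k} = \eta_{x_n}$, while $T_{x_{k+1}},\dots,T_{x_{n-1}}$ each commute with $T_{x_n}$ (their elements are incomparable to $x_n$); a direct computation using $\tau_{x_n}^* = \eta_{x_n} T_{x_n} \eta_{x_n}^{-1}$ then gives
\[
\tau_{x_n}^* \cdot T_{x_1} \cdots T_{x_{n-1}} \;=\; \eta_{x_n} T_{x_n}\, T_{x_{k+1}} \cdots T_{x_{n-1}} \;=\; \eta_{x_n} T_{x_{k+1}} \cdots T_{x_{n-1}} T_{x_n} \;=\; T_{x_1} \cdots T_{x_n},
\]
closing the induction.

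Combining these with Theorem~\ref{thm:NOR-transfer} yields
\[
\Theta \circ \up^{-1} \circ \NAR \;=\; \NOR \circ \Theta \circ \up^{-1} \;=\; \Theta \circ \up^{-1} \circ \down \circ \Theta \circ \up^{-1}.
\]
Cancelling $\Theta$ and $\up^{-1}$ from the left (both are invertible partial maps; $\Theta$ is an involution since $C$ is central) produces $\NAR = \down \circ \Theta \circ \up^{-1}$, as required. The main technical obstacle is the bookkeeping in establishing $\tau_{x_n}^* \cdots \tau_{x_1}^* = \NOR$: choosing a cooperative linear extension and tracking the commutations of incomparable order toggles. Everything else follows formally from the previously established Theorems~\ref{thm:tau-star-NC} and~\ref{thm:NOR-transfer}.
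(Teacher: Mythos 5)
Your overall framework is right: reduce via Theorem~\ref{thm:tau-star-NC} (applied toggle-by-toggle) to the group-theoretic identity $\tau_{x_n}^*\cdots \tau_{x_1}^* = T_{x_1}\cdots T_{x_n}$, then combine with Theorem~\ref{thm:NOR-transfer} and cancel. This is the same reduction the paper uses. But there is a genuine gap in the way you establish the key identity.

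You induct on $|P|$ by deleting a maximal element $x_n$ and assert that ``$\tau_{x_i}^*$ is unchanged whether computed in $P$ or $P\setminus\{x_n\}$,'' so that the inductive hypothesis on $P\setminus\{x_n\}$ gives $\tau_{x_{n-1}}^*\cdots\tau_{x_1}^* = T_{x_1}\cdots T_{x_{n-1}}$ \emph{inside} $\ntog_O(P)$. The word $\tau_{x_i}^*=\eta_{x_i}T_{x_i}\eta_{x_i}^{-1}$ is indeed the same string of symbols in both settings, but the generators it denotes are \emph{not} the same partial maps. If $v\lessdot x_n$ in $P$, then $T_v$ acting on $\bbs^P$ reads the label $f(x_n)$ in the factor $\sumparlimitlower{u\gtrdot v}f(u)$, whereas $T_v$ acting on $\bbs^{P\setminus\{x_n\}}$ does not (it may instead read $f(\widehat{1})=C$, if $v$ becomes maximal). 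So $\ntog_O(P)$ and $\ntog_O(P\setminus\{x_n\})$ are genuinely different groups of partial maps, and a relation proved in the latter does not automatically hold in the former. Your inductive step therefore does not close without an additional argument.

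One way to repair this is to observe that the only facts ever used in verifying the identity are commutation of $T_u$ and $T_v$ when neither covers the other, together with $E_v=T_v^{-1}$, and that for $u,v\in P\setminus\{x_n\}$ the cover relation in $P$ coincides with that in $P\setminus\{x_n\}$ (deleting a maximal element neither creates nor destroys covers among the remaining elements). This shows the identity is a formal consequence of relations common to both groups. But making that precise essentially forces you to track the proof rather than simply invoke the truth of the smaller-poset statement, at which point it is cleaner to do what the paper does: fix $P$ and induct on $k$, proving $\tau_{x_k}^*\cdots\tau_{x_1}^*=T_{x_1}\cdots T_{x_k}$ for $k=1,\dots,n$ entirely within $\ntog_O(P)$. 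That within-poset induction sidesteps the change-of-group issue altogether; the local commutation computation at the end of your argument is then exactly the paper's inductive step. Everything else in your write-up — the telescoping via Theorem~\ref{thm:tau-star-NC}, the use of Theorem~\ref{thm:NOR-transfer}, and the final cancellation — is correct.
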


Proving Theorem~\ref{thm:NAR-transfer}
is equivalent to proving the following diagram commutes on the domains in which the maps are defined.
This is because of Theorem~\ref{thm:NOR-transfer}
which says $\NOR=\Theta \circ \up^{-1} \circ \down$.
\begin{center}
\begin{tikzpicture}[yscale=2/3]
\node at (0,1.8) {$\bbs^P$};
\node at (0,0) {$\bbs^P$};
\node at (0,-1.8) {$\bbs^P$};
\node at (3.25,1.8) {$\bbs^P$};
\node at (3.25,0) {$\bbs^P$};
\node at (3.25,-1.8) {$\bbs^P$};
\draw[semithick, dashed, ->] (0,1.3) -- (0,0.5);
\node[left] at (0,0.9) {$\up^{-1}$};
\draw[semithick, dashed, ->] (0.7,-1.8) -- (2.5,-1.8);
\node[below] at (1.5,-1.8) {$\NOR$};
\draw[semithick, dashed, ->] (0.7,1.8) -- (2.5,1.8);
\node[above] at (1.5,1.8) {$\NAR$};
\draw[semithick, dashed, ->] (3.25,1.3) -- (3.25,0.5);
\node[right] at (3.25,0.9) {$\up^{-1}$};
\draw[semithick, dashed, ->] (0,-0.5) -- (0,-1.3);
\node[left] at (0,-0.9) {$\Theta$};
\draw[semithick, dashed, ->] (3.25,-0.5) -- (3.25,-1.3);
\node[right] at (3.25,-0.9) {$\Theta$};
\end{tikzpicture}
\end{center}

Since $\NOR=\Theta \circ \up^{-1} \circ \down$,
this leads to the following simpler commutative diagram.

\begin{center}
\begin{tikzpicture}[yscale=2/3]
\node at (0,1.8) {$\bbs^P$};
\node at (0,0) {$\bbs^P$};
\node at (3.25,1.8) {$\bbs^P$};
\node at (3.25,0) {$\bbs^P$};
\draw[semithick, dashed, ->] (0,1.3) -- (0,0.5);
\node[left] at (0,0.9) {$\down$};
\draw[semithick, dashed, ->] (0.7,0) -- (2.5,0);
\node[below] at (1.5,0) {$\NAR$};
\draw[semithick, dashed, ->] (0.7,1.8) -- (2.5,1.8);
\node[above] at (1.5,1.8) {$\NOR$};
\draw[semithick, dashed, ->] (3.25,1.3) -- (3.25,0.5);
\node[right] at (3.25,0.9) {$\down$};
\end{tikzpicture}
\end{center}

\begin{proof}
This proof is similar to the proof
of~\cite[Theorem 3.21]{antichain-toggling}.

Let $(x_1,x_2,\dots,x_n)$ be any linear extension of a finite poset $P$.  By the definitions,
$\NAR=\tau_{x_n}\cdots \tau_{x_2}\tau_{x_1}$
and
$\NOR=T_{x_1} T_{x_2} \cdots T_{x_n}$.

Using the isomorphism from $\ntog_A(P)$ to $\ntog_O(P)$ given by $\tau_v \mapsto \tau_v^*$,
it suffices to show that
$\tau_{x_n}^*\cdots \tau_{x_2}^* \tau_{x_1}^*=\NOR = T_{x_1} T_{x_2} \cdots T_{x_n}$.
We will use induction to prove that $\tau_{x_k}^*\cdots \tau_{x_2}^* \tau_{x_1}^*
=T_{x_1} T_{x_2} \cdots T_{x_k}$ for $1\leq k\leq n$.

For the base case, $\tau_{x_1}^*=T_{x_1}$ since $x_1$ is a minimal element of $P$.
For the induction hypothesis, let $1\leq k\leq n-1$ and assume that
$\tau_{x_k}^*\cdots \tau_{x_2}^* \tau_{x_1}^*
=T_{x_1} T_{x_2} \cdots T_{x_k}$.
Then 
\begin{equation}\label{eq:future frenzy}
\tau_{x_{k+1}}^*\tau_{x_k}^*\cdots \tau_{x_2}^* \tau_{x_1}^*
= \eta_{x_{k+1}} T_{x_{k+1}} \eta_{x_{k+1}}^{-1} T_{x_1} T_{x_2} \cdots T_{x_k}.
\end{equation}
Let $(y_1,\dots,y_{k'})$ be a linear extension of the subposet $\{y\in P \;|\; y<x_{k+1}\}$ of $P$.
Then since $(x_1,\dots,x_n)$ is a linear extension of $P$, all of $y_1,\dots,y_{k'}$ must be in $\{x_1,\dots,x_k\}$.
Furthermore, any element less than one of $y_1,\dots,y_{k'}$ must be less than $x_{k+1}$ so none of the elements of $\{x_1,\dots,x_k\}$ outside of $\{y_1,\dots,y_{k'}\}$ are less than any of $y_1,\dots,y_{k'}$.
Therefore, we can name these elements in such a way that $(y_1,\dots,y_{k'}, y_{k'+1}, \dots, y_k)$ is a linear extension of $\{x_1,\dots,x_k\}$.
Noting again that any two linear extensions of a poset differ by a sequence of swaps between
adjacent incomparable elements~\cite{etienne-84}, 
toggles of incomparable elements commute so $T_{x_1}T_{x_2}\cdots T_{x_k} = T_{y_1}\cdots T_{y_{k'}}T_{y_{k'+1}}\cdots T_{y_k}$.
From Eq.~(\ref{eq:future frenzy}) and $\eta_{x_{k+1}}=T_{y_1}\cdots T_{y_{k'}}$, we obtain
\begin{align*}
\tau_{x_{k+1}}^*\tau_{x_k}^*\cdots \tau_{x_2}^* \tau_{x_1}^* &=
\eta_{x_{k+1}} T_{x_{k+1}} \eta_{x_{k+1}}^{-1}
T_{x_1} T_{x_2} \cdots T_{x_k}
\\&=
T_{y_1}\cdots T_{y_{k'}} T_{x_{k+1}} E_{y_{k'}}\cdots E_{y_1} T_{y_1}\cdots T_{y_{k'}}T_{y_{k'+1}}\cdots T_{y_k}
\\&=
T_{y_1}\cdots T_{y_{k'}} T_{x_{k+1}} T_{y_{k'+1}}\cdots T_{y_k}
\\&=
T_{y_1}\cdots T_{y_{k'}} T_{y_{k'+1}}\cdots T_{y_k}  T_{x_{k+1}}
\\&=
T_{x_1} T_{x_2} \cdots T_{x_k} T_{x_{k+1}}.
\end{align*}
In the fourth equality above, we could move $T_{x_{k+1}}$ to the right of $T_{y_{k'+1}}\cdots T_{y_k}$ because $x_{k+1}$ is incomparable with each of $y_{k'+1},\dots, y_k$.  This is because none of these are less than $x_{k+1}$ by design nor greater than $x_{k+1}$ by position within the linear extension $(x_1,\dots,x_n)$ of $P$.

By induction, we have $\tau_{x_n}^*\cdots \tau_{x_2}^* \tau_{x_1}^* = T_{x_1} T_{x_2} \cdots T_{x_n} = \NOR = \Theta\circ\up^{-1}\circ \down$ so
$$\tau_{x_n}\cdots \tau_{x_2} \tau_{x_1} = \NAR
=\down\circ\Theta\circ\up^{-1}.$$
\end{proof}

From Theorem~\ref{thm:NAR-transfer}, and the ensuing commutative diagrams, the orders of $\NAR$ and $\NOR$ are equal on any poset.  So Grinberg's Conjecture~\ref{conj:NOR-prod-chains}
is equivalent to the claim that $\NAR$ has order $a+b$ on
$[a]\times[b]$. Although we do not resolve this conjecture here, we hope that giving another approach from the antichain perspective may be helpful in studying these questions.  

Furthermore, it appears at every step of the process, the labels can be written in a way that is no more complicated than in the (commutative) birational realm.  What we mean by that is they can be written in a way that contains every factor from the birational realm (multiplied in a certain order) and does {\bf not} require extra factors that would cancel in the commutative realm.
Compare Figures~\ref{fig:BAR[2]x[3]} and~\ref{fig:NAR[2]x[3]} for $P=[2]\times[3]$.

We can extend the main results on graded rescalings from \S\ref{ss:g-rescaling} to the
noncommutative setting as long as each component of the rescaling vector $(a_{0},\dots ,a_{r})$
lies in the \emph{center} of $\bbs$.  Under this assumption
Propositions~\ref{prop:rank-tog-graded-rescaling} and~\ref{prop:BAR-graded-rescaling} go
through with $\BAR$ replaced by $\NAR$. (The analogous results for $\NOR$ are true as well.)
We omit the details.

\bibliography{bibliography}
\bibliographystyle{halpha}
\end{document}